\documentclass[11pt]{article}
\RequirePackage{amsthm,amsmath,amsfonts,amssymb}
\RequirePackage[numbers]{natbib}

\usepackage{graphicx}
\usepackage{prodint}
\usepackage{hyperref}
\hypersetup{colorlinks=true,linkcolor=blue,filecolor=magenta,urlcolor=cyan}
\usepackage{authblk}
\theoremstyle{plain}
\theoremstyle{definition}
\newtheorem{theorem}{Theorem}[section]
\newtheorem{corollary}{Corollary}[theorem]
\newtheorem{lemma}[theorem]{Lemma}
\newtheorem{proposition}[theorem]{Proposition}
\newtheorem{definition}{Definition}[section]

\renewcommand{\Pr}{\mathbf{P}}

\newcommand{\RED}[1]{{\color{red} #1}}
\newcommand{\BLUE}[1]{{\color{blue} #1}}

\newcommand{\OLDPROOF}[1]{}

\title{``Game, Set, Match'' \\
Double Delight Watching a Grand Slam Tennis Match}
\author{Edsel A.\ Pe\~na\footnote{E.\ Pe\~na is Professor and Chair, Department of Statistics, University of South Carolina. E-mail: pena@stat.sc.edu.} \quad Dip Das\footnote{D.\ Das is Graduate Student in the Department of Statistics, University of South Carolina. E-mail: dipd@email.sc.edu.} \quad\ Yuexuan Wu\footnote{Y.\ Wu is Assistant Professor, Department of Statistics, University of South Carolina. E-mail: yuexuan@sc.edu.}}
\affil{Department of Statistics \\ University of South Carolina \\ Columbia, SC 29208 USA}
\date{\today}

\begin{document}

\maketitle
	
\abstract{In this pedagogically-oriented paper, probabilistic properties of tennis scoring systems are comprehensively examined and compared with best-of-$K$ systems. A simplistic mathematical model, where each player has his/her own probability of winning his/her service point and which remains invariant for the duration of the match, and where outcomes of points played are independent of each other, is assumed. Given these service success probabilities, probabilities of winning a game tie-breaker, a game, a set tie-breaker, a set, and the match are obtained. Since tennis scoring systems are unique, probability calculations require decomposing big and complicated problems into smaller and simpler constituent problems, solving these sub-problems, then combining to obtain the solution to the big problem. The problems that arise from tennis scoring systems offer excellent pedagogical venues for teaching probability, in particular, the use of the Theorem of Total Probability and the Iterated Rules for Mean, Variance, and Covariance. There are also many interesting questions in tennis, foremost of which is whether a tennis match under this assumption will {\em actually} end with probability one; or whether when two players of `equal abilities' play a match, the first server possesses an advantage. Some of these intriguing questions are addressed in this work. Tennis scoring systems are technically statistical decision systems to determine the better player. However, since such a decision system is based on just a {\em finite} number of points played, erroneous decisions could arise, such as the inferior player winning the match. Of interest, therefore, is to compare different systems in terms of the probability of the better player winning, as well as the duration of the match in terms of the number of points played. Such comparisons are done by introducing a measure of a scoring system's efficiency. A best-of-$(2L+1)$-games system, with a tie-breaker where the winner is the first to achieve an advantage of two games, appears comparable to the tennis' best-of-five-sets system, though the number of points played may drastically differ between the two systems, especially when both players are very outstanding (or, equivalently, very mediocre) servers against each other. A myriad of statistical questions  for future work are also described.}

\medskip

\noindent
{\bf Keywords and Phrases:} {Best-of-$K$ Scoring Systems; Efficiency of Statistical Decision Rules; Iterated Expectation and Variance Rules; Sequential Decisions; Tennis Scoring Systems; Theorem of Total Probability.}

\maketitle

\section{Double Joys of Watching a Tennis Match}
\label{sec: US Open}

{\em Have you recently watched a Grand Slam tennis match; a game in the Baseball World Series; or an American football game in the College Football Playoff (CFP) National Championship, wherein the match reaches a tie-breaker stage: a 6-6 score in tennis on the fifth set; a tied score after nine innings in baseball; or a tied score after four quarters in football? Suddenly, looking at your watch, you start wondering whether the match might go on forever and never finish, especially after it has reached a tie-breaker score of 16-16 points in tennis;  still a tied score after 15 innings in baseball; or,  still a tied score after 3 overtime cycles in football?}

These thoughts entered the mind of the senior and lead author, who had the wonderful delight of watching a Round-of-16 US Open tennis match between the eventual winner of the 2025 US Open Tennis Championship, Carlos Alcaraz of Spain, and Arthur Rinderknech of France, at Arthur Ashe Stadium in Flushing, New York City last August 31, 2025 (see Figure \ref{fig: Alcaraz serve}). However, it was more than just {\em a} delight, since a statistician watching a top-level tennis match also delights in the myriad mathematical, probabilistic, and statistical aspects of the match, such as will a tie-breaker {\em ever} end? In addition, a tennis match contains both asymmetries and symmetries. The player who serves in a point or game usually has an advantage for that point or game, an asymmetry; but, because of the tennis scoring system, there is also an inherent symmetry in the match. The senior author, while watching the match, aside from the question of a {\em never-ending} match, started to wonder as well whether the player who serves first, even if the two players are hypothetically of equal abilities (hypothetically, since of `equal abilities' may never happen in reality), has a higher probability of winning the match. As such, it was for the senior author a {\em double} delight watching the match!
\begin{figure}
\begin{center}
\includegraphics[width=.5\textwidth,height=.3\textwidth]{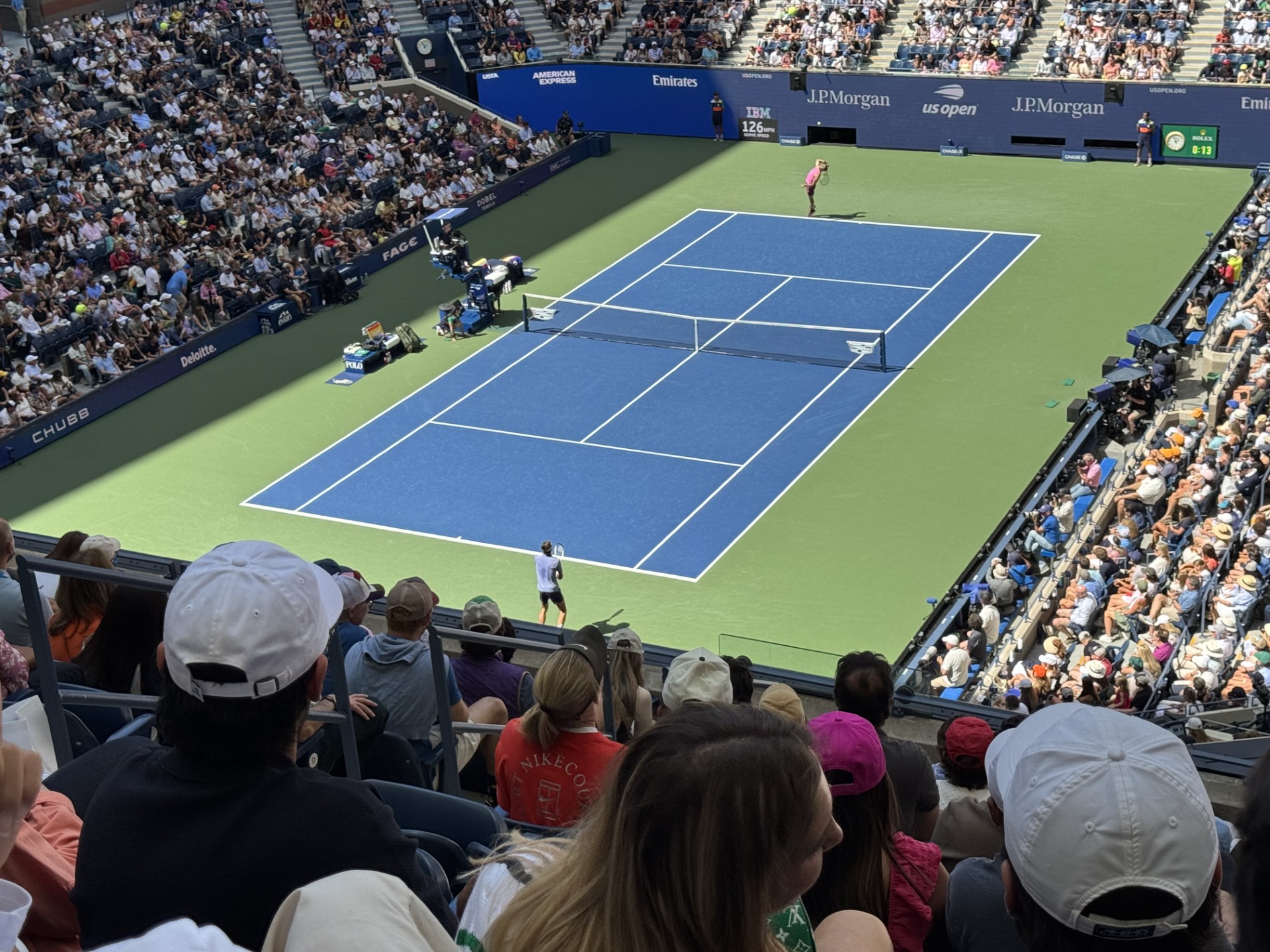}
\caption{Carlos Alcaraz of Spain serving against Arthur Rinderknech of France in a Round-of-16 match at the US Open in Arthur Ashe Stadium, Flushing, New York City, August 31, 2025.}
\label{fig: Alcaraz serve}
\end{center}
\end{figure}

The scoring system in tennis is unique. See the United States Tennis Association website 
{\url{https://www.usta.com/en/home.html}}
for tennis scoring rules. A match is usually either a best-of-three sets or a best-of-five sets. In the US Open, the men's matches are best-of-five sets, whereas the women's matches are best-of-three sets. The first to serve in the first set of a match is determined randomly (well, a coin is tossed, and the winner of the coin toss decides whether to serve first or to receive first). Each set consists of a sequence of games. A game consists of points and one player serves for the duration of the game. A game ends if one player reaches four points (with the scoring being 0 (Love), 15, 30, 40, game), provided that there is at least a two-point advantage. If both players reach three points each (a 40:40 score, called a {\em deuce}), a game tie-breaker ensues where the winner is the first to have an advantage of two points. The server for the game continues serving throughout, including the game tie-breaker (deuce). 
If the player serving in a game wins the game, we say that the server held serve, whereas if the serving player loses the game, we say that the server was broken.
The players alternate serving the games of a set. The set ends if a player wins six games, provided there is an advantage of at least two games. If the players win five games each, then the first to win seven games, while the other player remains at five wins, wins the set. If the players win six games each, then a set tie-breaker is played. In the set tie-breaker where the players play for points, the first to serve is the player who was the receiver in the last game played. Then the players alternate serves according to a $ABBAABBAA...$ or a $BAABBAABB...$ sequence (we refer to two players as players $A$ and $B$). The first player to reach $K$ points, where $K = 7$ if the set is not the last set, and $K=10$ if the set is the deciding set as in the US Open, wins the set tie-breaker, provided that there is an advantage of at least two points. If both players win $(K-1)$ points each, then a set tie-breaker's tie-breaker ensues, with the serving sequence still followed. The first player to achieve an advantage of 2 points in the set tie-breaker's tie-breaker wins the set tie-breaker, and hence the set. 
After the first set, in the succeeding sets, the player who received in the last game, provided there was no set tie-breaker, serves first in the set, but if there was a set tie-breaker, the player who received first in the set tie-breaker, serves first in the succeeding set.

The senior author, having taught probability courses to undergraduate and graduate students, from the most basic to the most advanced, over many years, recognized that a tennis match provides an excellent specimen to demonstrate instructive probability calculations, at least in an idealized setting. In a tennis match, there are several questions that could be asked, such as:
%
(i) How do we know that a tennis match will actually end with probability one? This is an interesting question to pose to students since their immediate response is either {\em obviously!} or {\em why even ask this question?}, since, of course, everyone has always seen a tennis match completed; but the instructor could ask the student to provide a {\em mathematical proof} that indeed it will end with probability one;
(ii) Is the tennis scoring system {\em fair} in the sense that if the players are of equal abilities, then each has a 50\% chance of winning, or is it the case that the player who serves first possesses an advantage? This is a non-trivial question. See, for example, Kingston \cite{Kingston1976}, which investigates this issue theoretically; and Magnus and Klaasen \cite{Magnus_Klaasen_1999}, which examines this issue based on empirical data, as well as George \cite{George1973}, which discusses optimal strategies in service in tennis;
(iii) How efficient is the tennis scoring system for determining the better player, and how should efficiency be measured? How does a tennis scoring system compare with other systems, such as a best-of-$K$-games system? Or, is a best-of-five-sets match more efficient than a best-of-three-sets match at identifying the better player? 
(iv) In a set tie-breaker, what is the impact of $K$ being odd (e.g., $K=7$), or $K$ being even (e.g., $K=10$), as is the case now with the US Open system?
(v) On average, how many sets will be played in the match; how many games will be played in a set; and how many points will be played in a game, a set, or in the match?
(vi) There are also many statistical questions that arise, though these will not be addressed in the current paper. For example, based on the results of a tennis match, what are the best estimates of the probabilities of the players winning their service points? Another question is, if one has a prior joint distribution on the service probabilities of the players, after the match, what is the posterior joint distribution of these service probabilities, and what is the posterior probability that the better player indeed won the match?
%

Answers to these questions are not immediately apparent, hence these questions offer situations where a student needs to learn how to decompose a problem into its constituent parts, which are easier to solve, and then to combine these intermediate solutions to answer the more complicated questions. This is what we do in this paper, which we believe will be an instructive pedagogical paper, both for undergraduate and graduate students. Aside from these pedagogical aspects, the results we present provide us with insights about scoring systems in tennis, and which could be extended to scoring systems in other sports or in competitions involving two players, two teams, two organizations, etc., or even in quality control settings.

For our tennis setting, as stated earlier, we refer to the two players as players $A$ and $B$. We denote by $p_A$ the probability that when $A$ serves the point (abbreviated $A$'s service point), he/she wins the point; while $p_B$ is the probability that when $B$ serves, he/she wins the point. We shall then let $q_A = 1 - p_A$ and $q_B = 1 - p_B$. Generically, we let $V$ be the Bernoulli random variable indicating that player $A$ wins his/her service point; while $W$ indicates that player $B$ wins his/her service point: $V\sim B(1,p_A)$ and $W \sim B(1,p_B)$, where $B(K,p)$ is the binomial distribution with $K$ trials and success probability $p$, so $B(1,p) = Ber(p)$ is a Bernoulli distribution with parameter $p$. $V_1, V_2, \ldots$ and $W_1, W_2, \ldots$ then represent the sequences of outcomes for points served by $A$ and for points served by $B$, respectively, with the assumption that $V_i$s and $W_j$s are stochastically independent random variables. We combine these with the assumption that the $V_i$s are identically distributed $B(1,p_A)$, i.e., $V_1,V_2, \ldots \stackrel{IID}{\sim} B(1,p_A)$, and $W_j$s are identically distributed $B(1,p_B)$ i.e., $W_1,W_2, \ldots \stackrel{IID}{\sim} B(1,p_B)$, though clearly these assumptions are mathematical idealizations and not realistic.
We then let the odds and the odds ratio (OR) be denoted, respectively, by
$$ODDS(p) = \frac{p}{1-p} = \frac{p}{q} \quad \mbox{and} \quad
OR(p_A,p_B) = \frac{p_A/q_A}{p_B/q_B}.$$

There has been papers dealing with scoring systems in tennis. Early papers are those by Kemeny and Snell \cite{Kemeny_Snell_1960} which modeled a single game of tennis as a Markov chain, and Hsi and Burych \cite{Hsi_Burych_1971} where probability models were introduced to analyze a two-player competition, in particular presenting the probability that one player wins a single set of classical tennis. Newton and Keller \cite{Newton_Keller_2005}, perhaps the closest to our paper, examined some of the questions we are also addressing, such as the probability of a player winning a game, a set, or the match, given the players probabilities of winning a point on their serves, or whether there is an advantage to the first server in a set, hence in the match. The set tie-breaker they considered is a 13-point tie-breaker, whereas we consider the current system employed in the four Grand Slam tournaments where a 7-point set tie-breaker is used for the first 4 sets (in men's) and the first 2 sets (in women's), with the final set having a 10-point set tie-breaker. The formulas for the probabilities in \cite{Newton_Keller_2005} are expressed in recursive forms. Their paper, however, did not deal with the duration of the game, set, or match, which we address in our paper, nor did they make comparisons with other scoring systems or studied the efficiency of scoring systems. An interesting aspect that their paper addressed is the probability that a specific player will win a Grand Slam tournament with 128 players. An earlier paper by Carter and Crews \cite{Carter_Crews_1974} explored the effect of ``tie-breakers'' on the duration of a game, set, or match, and also examined the `first-server' effect. Pollard \cite{Pollard1983} also obtained expressions for the probability of a player winning a game, a set, or the match in tennis, and also examined distributional characteristics of the number of points played in a game, a set, or a match. 

This paper is organized as follows. Section \ref{sec: Game} will examine the game tie-breaker and then the game. The analysis will be focused on the probability that player $A$ will win, and also the characteristics of the number of points that need to be played. A particular result is that a game will certainly end, that is, will end with probability one. Section \ref{sec: Set} will then examine the set tie-breaker's tie-breaker with a result that this will end with probability one; then the set tie-breaker; and finally the set. Section \ref{sec: Match} will then examine the match itself. Section \ref{sec: efficiency and comparison} will be concerned with the efficiency of scoring systems, and comparisons will be performed with best-of-$K$ systems, both in terms of efficiencies, probabilities of player $A$ winning, and characteristics of the number of points that need to be played. Finally, section \ref{sec: Concluding Remarks} will provide concluding remarks, and it will end with a discussion of some statistical questions for future research. All computer programs used in the calculations were coded in the {\tt R} programming language \cite{R}, and all plots were generated using either the {\tt plot}, {\tt matplot}, {\tt contour}, or {\tt filled.contour} objects in the {\tt R} environment.

\section{Game Tie-Breaker and Game}
\label{sec: Game}

The question of whether a tennis match will end with probability one is equivalent to asking whether both the game tie-breaker (GT) and the set tie-breaker (ST) will end with probability one. We first resolve the case of GT.   We shall denote by $\theta_{GT}^{END}(p)$ the probability that the GT will end when the player serving on a point has probability $p$ of winning the point. 

\subsection{Game Tie-Breaker}

\begin{proposition}
\label{prop: game tie-breaker ends}
    If the server has probability $p$ of winning the point, then the game tie-breaker will end with probability 1, that is, $\theta_{GT}^{END}(p) = 1$ for every $p \in [0,1].$
\end{proposition}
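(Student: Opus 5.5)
The plan is to view the game tie-breaker (deuce) as a sequence of independent pairs of points, all served by the same player, and to use the Theorem of Total Probability to obtain a recursive equation for $\theta_{GT}^{END}(p)$. Starting at deuce, the next two points give three cases. The server wins both, with probability $p^2$, and the tie-breaker ends. The receiver wins both, with probability $q^2$, and it also ends. Or the two players split the points, with probability $2pq$, and the score returns to deuce. Because the points are IID $B(1,p)$, the process after a return to deuce is a probabilistic replica of the original. Conditioning on the first two points therefore gives
$$\theta_{GT}^{END}(p) = p^2 + q^2 + 2pq\,\theta_{GT}^{END}(p).$$

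The first key step is to justify this renewal identity carefully. Let $E$ be the event that the tie-breaker ends, and let $D$ be the event that the first two points are split. On $D$, the event $E$ is determined by the shifted sequence $V_3, V_4, \ldots$. This sequence is independent of $(V_1,V_2)$ and has the same distribution as $V_1, V_2, \ldots$. Hence $\Pr(E \mid D) = \Pr(E)$.

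The second step is to solve the equation. If $p\in(0,1)$, then $2pq \le 1/2 < 1$, so we can divide by $1-2pq$. Since $p^2+q^2 = 1-2pq$, this gives $\theta_{GT}^{END}(p) = (1-2pq)/(1-2pq) = 1$. If $p\in\{0,1\}$, then $2pq=0$, the tie-breaker ends after exactly two points, and again $\theta_{GT}^{END}(p)=1$.

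To avoid any worry about dividing by quantities or about whether the recursion has a unique solution, I will also give a direct argument. Let $N$ be the number of two-point rounds played before the tie-breaker ends. The tie-breaker fails to end only if the first $n$ rounds are all splits, for every $n$. Therefore
$$1-\theta_{GT}^{END}(p) \le (2pq)^n \quad\text{for every } n.$$
Since $2pq\le 1/2$, the right side tends to $0$, so the failure probability is $0$.

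The only delicate point, and the expected main obstacle, is the rigorous justification of the renewal (strong Markov) step. The direct geometric bound sidesteps this, so I would present it as the main proof and the recursive equation as the pedagogical illustration. A useful by-product is that the number of rounds is geometric with success probability $1-2pq$, which yields finite moments for the length of the tie-breaker.
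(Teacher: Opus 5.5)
Your proof is correct. The recursion you get by conditioning on the first two points is exactly the paper's argument. The paper writes the ending event as the disjoint union of the events $E_n$ that the tie-breaker ends at point $2n$, splits off $E_1$, and uses $(V_1,V_2,\ldots)\stackrel{d}{=}(V_3,V_4,\ldots)$ to reach $\theta_{GT}^{END}(p)=(p^2+q^2)+2pq\,\theta_{GT}^{END}(p)$, which it then solves.

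Where you genuinely depart is in making the tail bound $1-\theta_{GT}^{END}(p)\le (2pq)^n$ the main proof. That route is more elementary. It needs only independence of the first $2n$ points and monotonicity of probability, with no shift-invariance. It also shows directly why the answer is $1$: the non-ending event sits inside events of geometrically vanishing probability. The paper's recursion, by contrast, is a reusable template. Deleting the $q^2$ term gives $\theta_{GT}(p)=p^2/(p^2+q^2)$ in the next proposition at no extra cost. The same equation, with $p^2+q^2$ and $2pq$ replaced by $p_Aq_B+q_Ap_B$ and $p_Ap_B+q_Aq_B$, then handles the set tie-breaker's tie-breaker.

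Two small remarks. First, the renewal step you flag as the main obstacle is not a strong Markov argument. The regeneration happens at the deterministic time $2$, so ordinary independence of $(V_1,V_2)$ from $(V_3,V_4,\ldots)$ suffices, which is what you in fact use. Second, there is no uniqueness issue for the recursion. It is a single linear equation whose coefficient $1-2pq\ge 1/2$ never vanishes.
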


\begin{proof}
    It is clear that when $p \in \{0,1\}$, the game tie-breaker immediately ends after two points, though of course in this case, the game will never reach this tie-breaker.

 Assume therefore that $p \in (0,1)$ and that the player $A$ is serving, so the sequence of outcomes of the points played in the tie-breaker is $V_1, V_2, \ldots$ which are IID $B(1,p_A = p)$. Let $E_n$ be the event that the game tie-breaker ends after $2n$ points, with $n = 1,2,\ldots$. Thus, for $n = 1,2,\ldots,$
    \begin{eqnarray*}
    E_n & \equiv & E_n(V_1, V_2, V_3, V_4, \ldots) 
    =  \bigcap_{i=1}^{n-1} \left\{[ V_{2i-1}  = 1,V_{2i} = 0 ]  \bigcup [ V_{2i-1} = 0, V_{2i} = 1 ] \right\}  \\ && \bigcap \left\{ [ V_{2n-1} = 1, V_{2n} = 1 ] \bigcup [ V_{2n-1} = 0, V_{2n} = 0 ]\right\}.
    \end{eqnarray*}
    The $E_n$s are disjoint events, so by the Theorem of Total Probability (cf., Ross \cite{Ross1998}),
    $$\theta_{GT}^{END}(p) = \Pr\{\cup_{n=1}^\infty E_n\} = \sum_{n=1}^\infty \Pr\{E_n\}.$$
    Next, with $\stackrel{d}{=}$ denoting `equal-in-distribution', we invoke the distributional property that
    $$(V_1,V_2,V_3, V_4, V_5,V_6, \ldots) \stackrel{d}{=} (V_3, V_4, V_5,V_6, V_7,V_8, \ldots)$$
    to obtain the third equality below, yielding a recursion equation.
    \begin{eqnarray*}
        \theta_{GT}^{END}(p) & = & \Pr\{E_1\} + \Pr\{\cup_{n=2}^\infty E_n\} \\
         & =  & (p^2 + q^2) + \Pr\{[V_1=1,V_2=0] \cup [V_1=0,V_2=1]\} \times \\ &&  \sum_{n=2}^\infty \Pr\{E_n(V_3,V_4,V_5,V_6,\ldots)\} \\
        & = & (p^2 + q^2) + (pq + qp) \sum_{n=1}^\infty \Pr\{E_n(V_1,V_2,V_3,V_4,\ldots)\} \\
         & = &  (p^2 + q^2) + 2pq \theta_{GT}^{END}(p).
    \end{eqnarray*}
    Thus, we have
    $\theta_{GT}^{END}(p) = (p^2 + q^2) + 2pq \theta_{GT}^{END}(p),$
    which leads to $$\theta_{GT}^{END}(p) = \frac{p^2 + q^2}{1-2pq} = \frac{p^2+q^2}{(p+q)^2 - 2pq} = 1.$$
    For pedagogical purposes, the proof is easily visualized using a tree diagram with depth 2, with some edges or branches of depth 2 going back to the root node.
\end{proof}

Next, we compute the probability, denoted by $\theta_{GT}(p)$, that the serving player will win the game tie-breaker.

\begin{proposition}
\label{prop: game probability}
    If the server has probability $p$ of winning the point, then the probability that the server wins the game tie-breaker is
    $$\theta_{GT}(p) = \frac{p^2}{1-2pq} = \frac{p^2}{p^2 + q^2} = \frac{(ODDS(p))^2}{1 + (ODDS(p))^2} \quad
    \mbox{and} \quad
    \theta_{GT}(1-p) = 1 - \theta_{GT}(p).$$
\end{proposition}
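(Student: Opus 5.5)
I will mirror the first-step (two-point block) decomposition used in the proof of Proposition \ref{prop: game tie-breaker ends}. Take $p\in(0,1)$ first, with $A$ serving, so the tie-breaker points are $V_1,V_2,\ldots$ IID $B(1,p)$. Let $F_n \subseteq E_n$ be the event that the first $n-1$ pairs are split, meaning each pair has one win and one loss, and that the $n$-th pair is $(V_{2n-1},V_{2n})=(1,1)$. The server wins the game tie-breaker exactly on $\cup_{n\ge1}F_n$, and the $F_n$ are disjoint. Hence $\theta_{GT}(p)=\sum_n \Pr\{F_n\}$.

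I will condition on the first pair using the Theorem of Total Probability. If the first pair is $(1,1)$, which has probability $p^2$, the server has won. If it is $(0,0)$, with probability $q^2$, the server has lost. If it is split, with probability $2pq$, the score returns to deuce. By the shift invariance $(V_1,V_2,\ldots)\stackrel{d}{=}(V_3,V_4,\ldots)$ and the independence of the first pair from the rest, the conditional probability of winning from there is again $\theta_{GT}(p)$. This gives the recursion
$$\theta_{GT}(p) = p^2 + 2pq\,\theta_{GT}(p).$$
Alternatively, summing the series directly gives $\Pr\{F_n\}=(2pq)^{n-1}p^2$, a geometric series with ratio $2pq\le 1/2<1$. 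That also justifies the recursion without any circularity worry, because $\theta_{GT}(p)$ is known to be finite.

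Solving the recursion gives $\theta_{GT}(p)=p^2/(1-2pq)$. Since $1=(p+q)^2=p^2+2pq+q^2$, we have $1-2pq=p^2+q^2$, which gives the second form. Dividing numerator and denominator by $q^2$ yields $(p/q)^2/(1+(p/q)^2)$, the odds form. The endpoints need a separate check. At $p=1$ the formula gives $1$, correct since the server wins both points. At $p=0$ the first expression gives $0$, also correct. The odds form needs $q>0$, so at $p=1$ it should be read as a limit, or the endpoint covered by the $p^2/(p^2+q^2)$ form.

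For the symmetry, replacing $p$ by $1-p=q$ gives $\theta_{GT}(1-p)=q^2/(q^2+p^2)$. Then $\theta_{GT}(p)+\theta_{GT}(1-p)=(p^2+q^2)/(p^2+q^2)=1$. This is consistent with Proposition \ref{prop: game tie-breaker ends}: the probability the receiver wins is $q^2/(p^2+q^2)$, and together the two outcomes exhaust probability one.

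The only delicate step is justifying the recursion: the distributional identity must be applied to the event conditioned on a split first pair, and that pair must be shown independent of the shifted sequence. Both follow from the IID assumption, exactly as in the preceding proof. The rest is algebra.
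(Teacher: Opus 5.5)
Your proposal is correct and takes essentially the same route as the paper, which restricts the two-point block events of Proposition~\ref{prop: game tie-breaker ends} to those ending in $(1,1)$; this drops the $q^2$ term and gives the recursion $\theta_{GT}(p)=p^2+2pq\,\theta_{GT}(p)$. The paper leaves a few points implicit that you make explicit: the direct geometric-series check, the verification of $\theta_{GT}(1-p)=1-\theta_{GT}(p)$, and the endpoint cases $p\in\{0,1\}$, including the observation that the odds form must be read as a limit at $p=1$.
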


\begin{proof}
    Similar to the proof of Proposition \ref{prop: game tie-breaker ends} with the $E_n$ defined as
    \begin{eqnarray*}
    E_n & \equiv & E_n(V_1, V_2, V_3, V_4, \ldots) 
    =  \bigcap_{i=1}^{n-1} \left\{[ (V_{2i-1}  = 1,V_{2i} = 0 ]  \bigcup [ (V_{2i-1} = 0, V_{2i} = 1 ] \right\}  \\ &&  
    \bigcap \left\{ (V_{2n-1} = 1, V_{2n} = 1 \right\},
    \end{eqnarray*}
   so the $q^2$ term in the recursion is not present since $\Pr\{E_1\} = p^2.$
\end{proof}

\subsection{Game Probabilities}

Since the GT is certain to end, we could then calculate the probability that the serving player will win the game, which is the first player to reach at least 4 points, but with an advantage of at least 2 points.

\begin{theorem} \label{thm: prob wins game}
If the server has probability $p \in [0,1]$ of winning the point on his/her serve, then his/her probability of winning the game is
$$\theta_G(p) = p^4 + 4p^4 q + 10p^4 q^2 + 20p^3 q^3 \theta_{GT}(p) \quad
\mbox{and} \quad
\theta_G(1-p) = 1 - \theta_G(p),$$
with this last property being an $S$-shapedness property of the mapping $p \mapsto \theta_G(p)$.
\end{theorem}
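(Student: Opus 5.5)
The plan is to decompose the event that the server wins the game according to the score at which the game is decided. By Proposition \ref{prop: game tie-breaker ends} the game tie-breaker ends with probability one, so the game ends almost surely. The event ``server wins'' is then the disjoint union of four events: the server wins 4--0, 4--1, or 4--2, or the score reaches 3--3 (deuce) and the server then wins the game tie-breaker. With $V_1,V_2,\ldots$ IID $B(1,p)$, the server wins $4$--$k$ for $k\in\{0,1,2\}$ exactly when the $(4+k)$th point is won by the server and the first $3+k$ points contain exactly $3$ server wins. This has probability $\binom{3+k}{3}p^4q^k$, giving the coefficients $1$, $4$ and $10$. Deuce occurs exactly when the first six points contain exactly three server wins, which has probability $\binom{6}{3}p^3q^3=20p^3q^3$.

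For the deuce term, condition on the first six points. Because the sequence $(V_7,V_8,\ldots)$ is independent of $(V_1,\ldots,V_6)$ and has the same distribution as $(V_1,V_2,\ldots)$, the conditional probability that the server wins from deuce equals $\theta_{GT}(p)$ from Proposition \ref{prop: game probability}. This is the same shift argument used in the proof of Proposition \ref{prop: game tie-breaker ends}. The Theorem of Total Probability then gives the stated formula for $\theta_G(p)$. The boundary cases $p\in\{0,1\}$ need no separate treatment: the deuce term vanishes there, so any convention for $\theta_{GT}$ at those points is harmless.

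For the symmetry $\theta_G(1-p)=1-\theta_G(p)$, I would use a complementary-event argument rather than algebra. Since the game ends almost surely, $1-\theta_G(p)$ is the probability that the receiver wins. The receiver wins each point independently with probability $q$, and the winning rule is symmetric between the two players. Applying the same decomposition with the roles exchanged gives
\[
1-\theta_G(p)=q^4+4q^4p+10q^4p^2+20q^3p^3\,\theta_{GT}^{\ast},
\]
where $\theta_{GT}^{\ast}=1-\theta_{GT}(p)$ is the receiver's probability of winning the tie-breaker. By Proposition \ref{prop: game probability}, $1-\theta_{GT}(p)=\theta_{GT}(1-p)$. The right-hand side is therefore exactly $\theta_G(q)=\theta_G(1-p)$.

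As a check, one could verify the identity directly. Write $\theta_{GT}=p^2/(p^2+q^2)$ and confirm that $\theta_G(p)+\theta_G(q)=1$ using $p+q=1$. The only delicate point is in the complementary argument: one must justify that ``receiver wins'' is the complement of ``server wins.'' This is exactly where Proposition \ref{prop: game tie-breaker ends} is needed, and I expect it to be the main (mild) obstacle. The $S$-shape remark follows from the symmetry: it says the graph of $\theta_G$ is symmetric about the point $(1/2,1/2)$.
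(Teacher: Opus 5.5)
Your proposal is correct. The derivation of $\theta_G(p)$ is the same as the paper's: the same decomposition into wins at $4$--$0$, $4$--$1$, $4$--$2$, plus deuce followed by a tie-breaker win, with the same binomial counts and the same shift argument.

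The symmetry step takes a slightly different route. The paper expands $\theta_G(1-p)$ and substitutes $\theta_{GT}(1-p)=1-\theta_{GT}(p)$. It then adds and subtracts terms, using only that the three finite-horizon outcomes have total probability one: server reaches $4$ first, receiver reaches $4$ first, or $3$--$3$.

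You instead read $1-\theta_G(p)$ as the receiver's winning probability, which requires almost-sure termination of the whole game, and decompose that probability directly. The two arguments use the same ingredients, since the tie-breaker identity already encodes Proposition~\ref{prop: game tie-breaker ends}. Your version has the advantage of making explicit that $\theta_G(p)+\theta_G(1-p)$ equals the probability that the game ends.
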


\begin{proof}
For $n \in \{4,5,6\},$ define the event
$$E_n = \left\{\sum_{i=1}^{n-1} V_i = 3, V_n = 1\right\} \quad \mbox{and} \quad
T_6 = \left\{\sum_{i=1}^6 V_i = 3\right\}.$$
Then
$\left\{\mbox{Server Wins Game}\right\} = \left\{\cup_{n=4}^6 E_n \right\} \bigcup \left\{T_6 \cap \{\mbox{Server Wins GT}\}\right\}.$
Therefore,
\begin{eqnarray*}
\theta_G(p) & = & \sum_{n=4}^6 \Pr\{E_n|p\} + \Pr\{T_6|p\} \theta_{GT}(p) \\ 
 & = &  \left[\sum_{n=4}^6 {{n-1} \choose 3} p^3 q^{(n-1)-3}\right] p + \left[{6 \choose 3} p^3 q^3\right] \theta_{GT}(p).
\end{eqnarray*}
Expanding this expression yields the expression in the statement of the theorem.   To prove the $S$-shapedness property, since $\theta_{GT}(1-p) = 1 - \theta_{GT}(p)$, we have that
    \begin{eqnarray*}
       \lefteqn{ \theta_G(1-p)  =  q^4 +4q^4p + 10q^4p^2 + 20q^3p^3\theta_{GT}(1-p) } \\
        & = & q^4 +4q^4p + 10q^4p^2 + 20q^3p^3 - 20q^3p^3 \theta_{GT}(p) \\
        & = & \left\{\left[q^4 +4q^4p + 10q^4p^2\right] + 
         \left[p^4 +4p^4q + 10p^4q^2\right] + 20q^3p^3\right\} - \theta_{G}(p) 
          =  1 - \theta_G(p)
    \end{eqnarray*}
    since $\left\{\left[q^4 +4q^4p + 10q^4p^2\right] + 
         \left[p^4 +4p^4q + 10p^4q^2\right] + 20q^3p^3\right\}$ is the total probability of a (truncated) system where either player reaches 4 points or they reach a tied-score of 3 points each, hence will be equal to 1, since for certain the game will end, either with a win for A, a win for B, or a tied game.
\end{proof}

Note that the expression for $\theta_G(p)$ was also given in equation (5) of Newton and Keller \cite{Newton_Keller_2005}. The top left plot of Figure \ref{fig: game probability and game number of points} is a plot of the probability of the server winning the game with respect to his/her probability of winning the point. Observe that when $p=.5$, the game probability is also $0.5$, but as $p$ changes from .5 to 1, the graph is above the 45-degree line, whereas as $p$ changes from .5 to 0, the graph is below the 45-degree line. This shows that the game decision system provides an improvement over just playing one point. 

Observe in Figure \ref{fig: game probability and game number of points} that the graph of $\{(p,\theta_G(p)): p \in (0,1)\}$ is $S$-shaped, that is, there is a point $p_0$, which happens to be $p_0 = .5$, such that $\theta_{G}(p_0) = p_0$; on $p \in (p_0,1)$ we have $\theta_G(p) > p$; while on $p \in (0, p_0)$ we have $\theta_G(p) < p$. This {\em $S$-shapedness property} is not unique to tennis: it is a well-known result in information theory and in reliability theory. In the latter it is known that for any coherent system (see \cite{BarlowProschan1975}) with $n \ge 2$ components, with components acting independently and each component having reliability of $p$, the system reliability function has the $S$-shapedness property (see \cite{BarlowProschan1975}, Theorem 5.4). Thus, the game decision system in tennis could be viewed as a coherent system, which provides an additional conceptual interpretation of the $S$-shapedness observed in 
Figure \ref{fig: game probability and game number of points}.

\begin{figure}[h]
    \centering    \includegraphics[width=.3\textwidth,height=.3\textwidth]{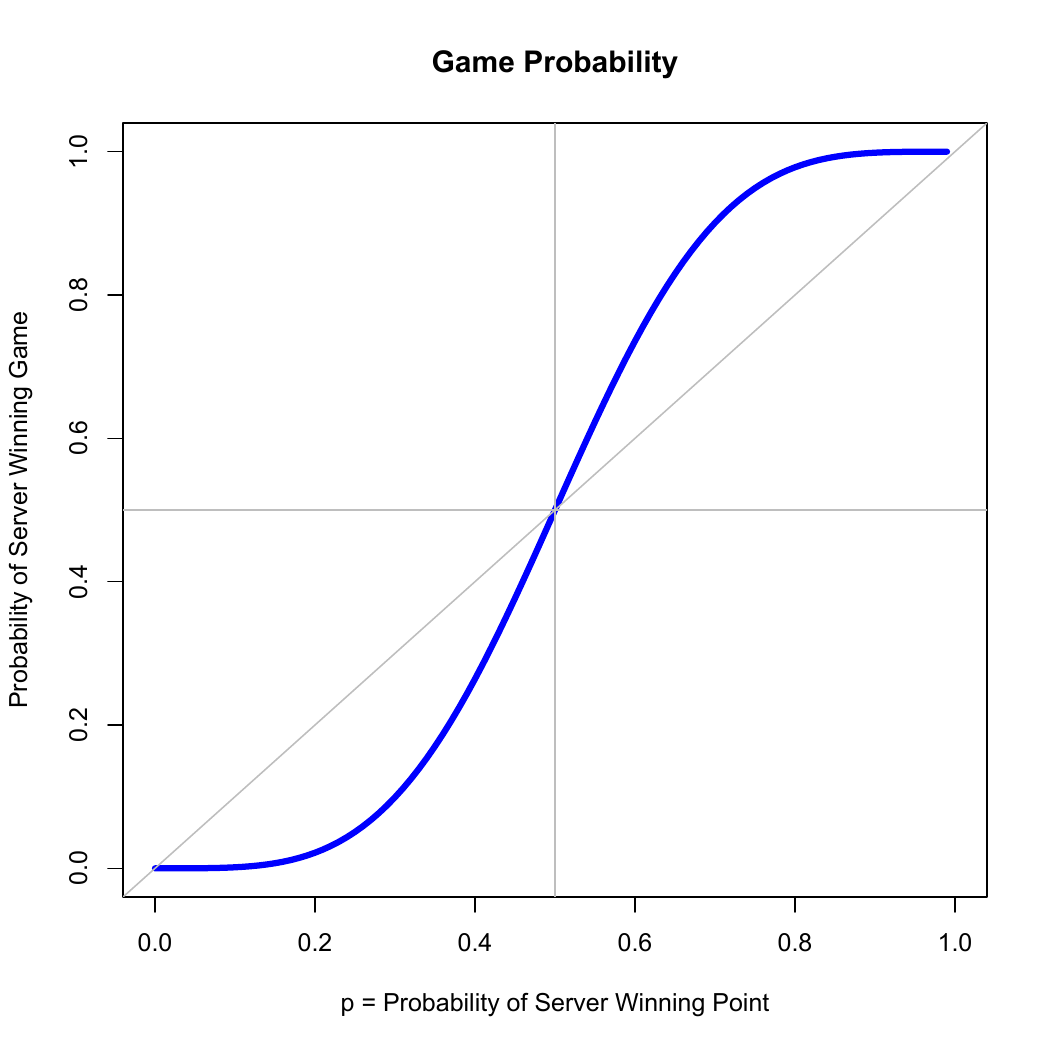} 
\includegraphics[width=.3\textwidth,height=.3\textwidth]{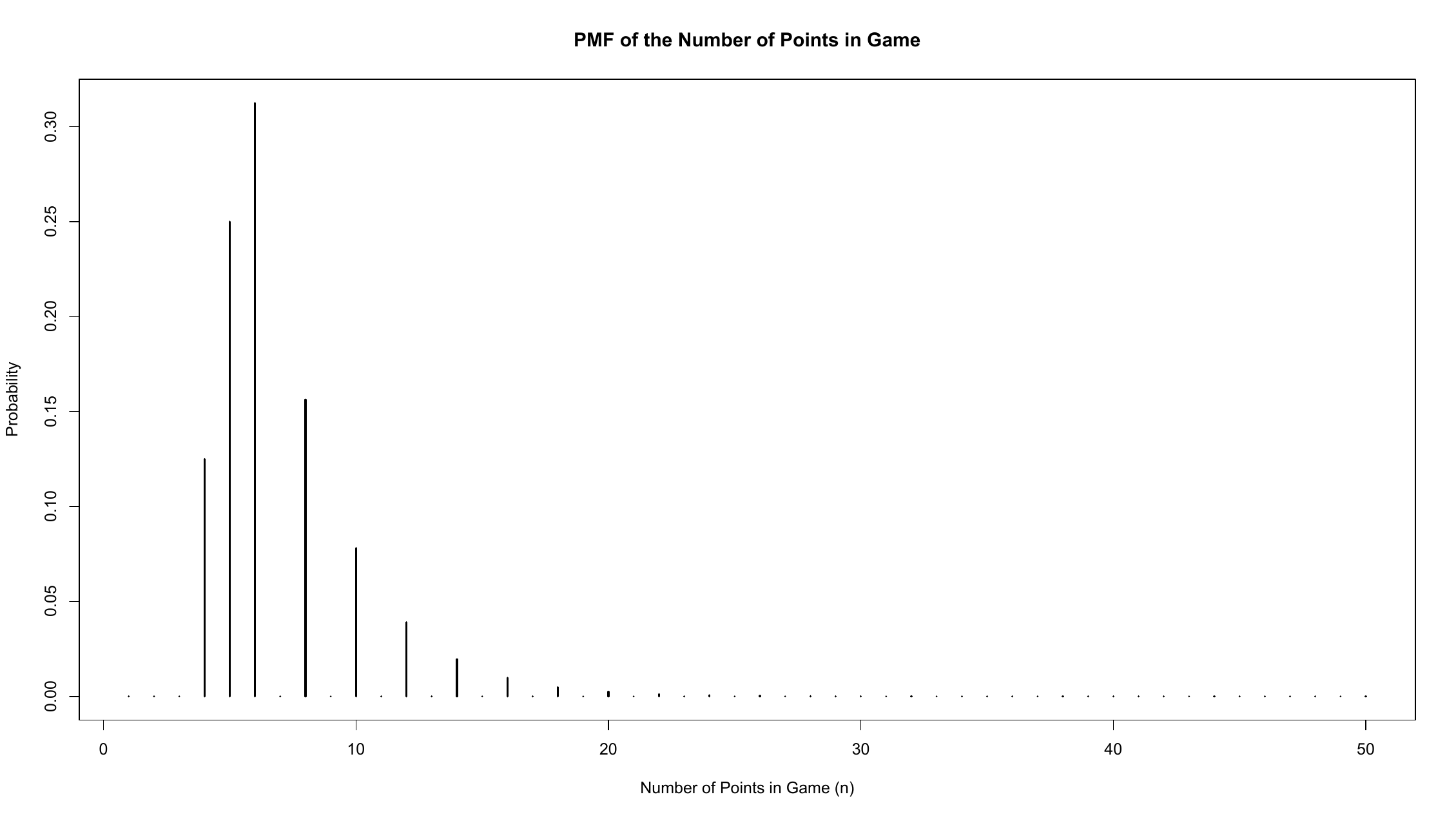} \\
\includegraphics[width=.3\textwidth,height=.3\textwidth]{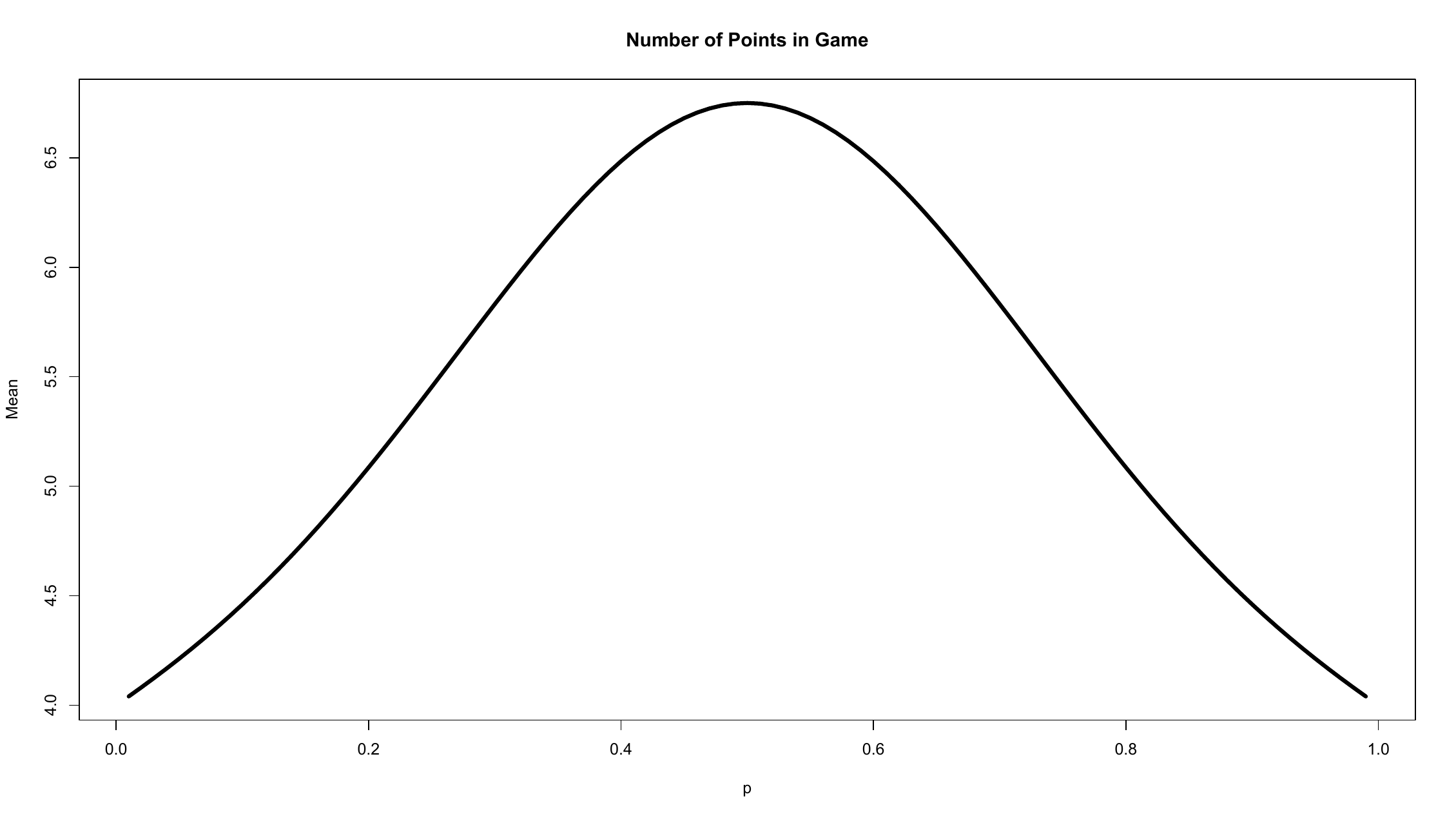} 
\includegraphics[width=.3\textwidth,height=.3\textwidth]{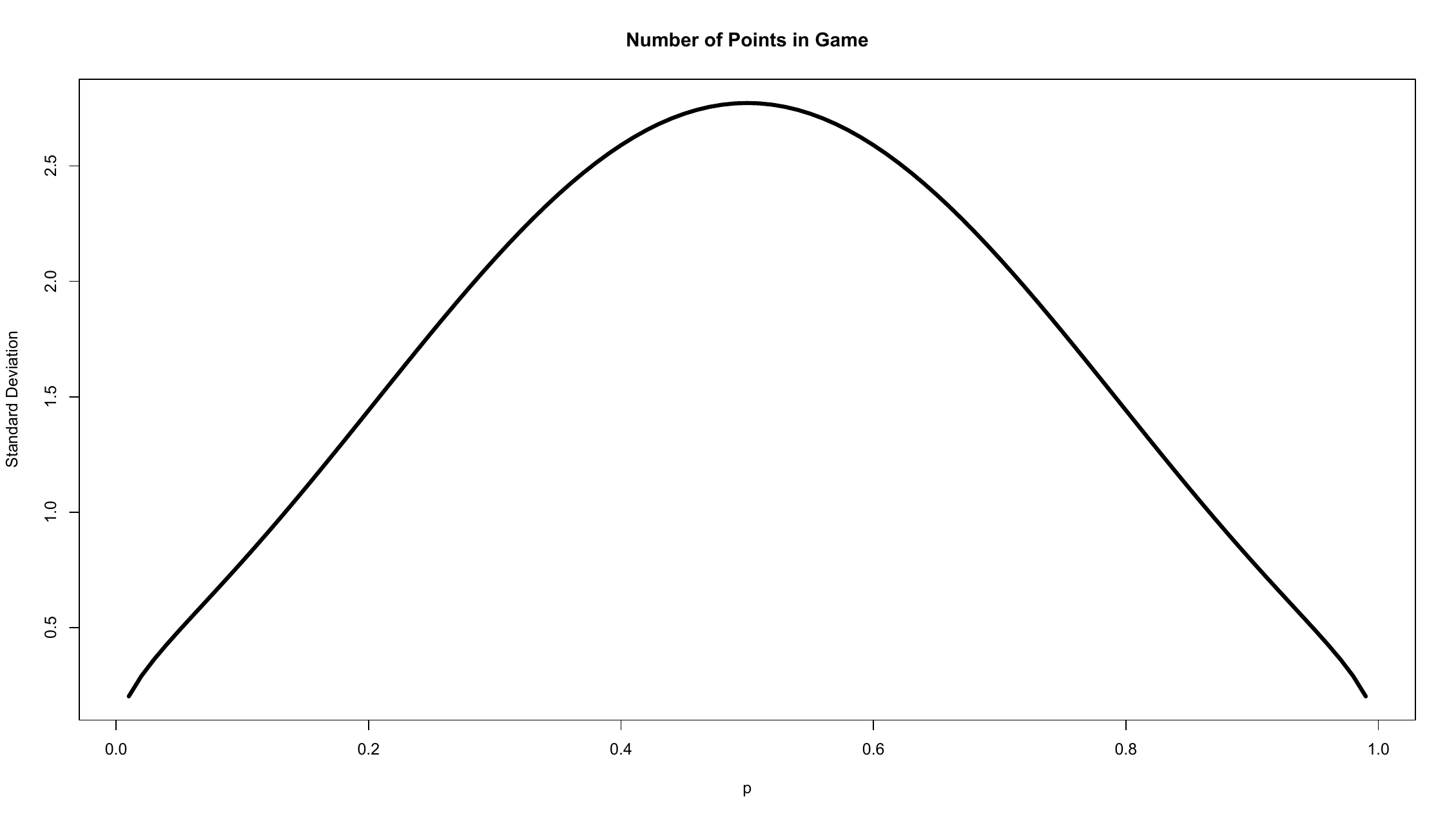}
    \caption{Characteristics of a Game. Top left plot is the probability of server winning the game with respect to server's probability of winning point; top right plot is the PMF of number of points played when $p=.5$. Bottom plots are the means and standard deviations of the number of points for $p \in (0,1)$.}
    \label{fig: game probability and game number of points}
\end{figure}

\subsection{Number of Points in a Game}

In this subsection, we examine the distribution of the number of points, $N_G$, needed to be played to end the game, for the tennis game system.  Denote by $p_{N_G}(n|p) = \Pr\{N_G = n|p\}$, where $p$ is the probability that the server will win the point.

\begin{theorem}
   If the server has probability $p \in [0,1]$ of winning the point on his/her serve, then the probability mass function (PMF) of the number of points $N_G$ in the game under the tennis game system is
    \begin{eqnarray*}
        p_{N_G}(n|p) =  \left\{
        \begin{array}{ccc}
        {{n-1} \choose 3} p^4 q^{n-4} + {{n-1} \choose 3} q^4 p^{n-4} & \mbox{for} & n = 4,5,6\\
        20p^3q^3(p^2+q^2)(2pq)^{\frac{n-6}{2}-1} & \mbox{for} & n = 8, 10, 12, \ldots \\
        0 & \mbox{otherwise} 
        \end{array}
        \right..
    \end{eqnarray*}
\end{theorem}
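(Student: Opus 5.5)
We partition the sample space according to how the game ends. This mirrors the decomposition in the proof of Theorem \ref{thm: prob wins game}. As there, let $V_1, V_2, \ldots$ be the IID $B(1,p)$ outcomes of the server's points. The game can terminate in only two ways: in \emph{regulation}, meaning one player reaches $4$ points before a $3$-$3$ tie; or in the \emph{game tie-breaker} after the event $T_6 = \{\sum_{i=1}^6 V_i = 3\}$. These are disjoint, so $\{N_G = n\}$ splits into a regulation part and a tie-breaker part. We then compute each part separately.

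\emph{Regulation, $n \in \{4,5,6\}$.} The game ends at point $n$ with a server win exactly on the event $E_n = \{\sum_{i=1}^{n-1} V_i = 3, V_n = 1\}$ from the proof of Theorem \ref{thm: prob wins game}. Its probability is ${{n-1}\choose 3} p^3 q^{n-4}\cdot p$. By symmetry, ending at point $n$ with a receiver win is the event $\{\sum_{i=1}^{n-1}(1-V_i) = 3, V_n = 0\}$, with probability ${{n-1}\choose 3} q^4 p^{n-4}$. These two events are disjoint, and for $n\le 6$ they exhaust $\{N_G=n\}$: a game cannot end at $n\le 6$ without someone reaching exactly $4$ points first, since a $3$-$3$ tie requires $6$ points. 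Adding them gives the first line of the PMF. It is also clear that $N_G \notin\{1,2,3,7\}$ and that $N_G=6$ cannot come from the tie-breaker.

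\emph{Tie-breaker, $n \ge 8$.} Given $T_6$, whose probability is ${6\choose 3}p^3q^3 = 20p^3q^3$, the remaining points $V_7, V_8, \ldots$ are independent of $T_6$. By the distributional shift used in Proposition \ref{prop: game tie-breaker ends}, they behave as a fresh IID sequence. So $N_G = 6 + N_{GT}$, where $N_{GT}$ is the length of the game tie-breaker. From the events $E_m$ in the proof of Proposition \ref{prop: game tie-breaker ends}, the tie-breaker ends after exactly $2m$ points iff the first $m-1$ pairs are split and the $m$-th pair is not. Hence $\Pr\{N_{GT} = 2m\} = (2pq)^{m-1}(p^2+q^2)$ for $m \ge 1$, a geometric law. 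Setting $n = 6 + 2m$, so that $m-1 = \frac{n-6}{2}-1$, and multiplying by $20p^3q^3$ gives the second line for even $n\ge 8$. Odd $n \ge 7$ is impossible because $N_{GT}$ is even. The ``otherwise'' line then follows.

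The main obstacle is making the conditioning step rigorous: the post-$T_6$ sequence must be independent of $T_6$ and distributed like the original, so that the tie-breaker length law applies unchanged. This follows from independence of $V_7,V_8,\ldots$ from $(V_1,\ldots,V_6)$. As a sanity check, the masses should sum to $1$. The tie-breaker terms sum to $20p^3q^3$ by Proposition \ref{prop: game tie-breaker ends}, and the regulation terms plus $20p^3q^3$ equal $1$, as noted in the proof of Theorem \ref{thm: prob wins game}. The degenerate cases $p\in\{0,1\}$ are covered by the same formulas, with the tie-breaker terms vanishing.
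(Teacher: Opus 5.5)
Your proof is correct and follows essentially the same route as the paper. Like the paper, you split $\{N_G=n\}$ into regulation endings at $n=4,5,6$, which give the two binomial terms, and endings after a $3$-$3$ score of probability $20p^3q^3$ multiplied by the geometric law $(2pq)^{m-1}(p^2+q^2)$ for a tie-breaker of $2m$ points. Your extra checks make explicit what the paper leaves implicit: that $T_6$ exactly characterizes reaching deuce, that $V_7,V_8,\ldots$ are independent of $T_6$, and that the masses sum to one.
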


\begin{proof}
    For $n = 4, 5, 6$ points, the game could end with either player $A$ or player $B$ winning. The winning player must get his/her 4th point on the $n$th point played. If player $A$ is to win, this has probability of ${{n-1} \choose 3} p^4 q^{n-4}$, while if player $B$ is to win, this has probability ${{n-1} \choose 3} q^4 p^{n-4}$, hence the expression  in the theorem.

    For $n = 8, 10, 12, \ldots$, this means that the game reached the game tie-breaker, so that the score must have reached a 3-3 score (in tennis parlance, a 40-40 score, a deuce). The probability of this is $20p^3q^3$. Having reached deuce, the first player to go ahead by 2 points wins. To end at 8 points played, either player $A$ wins two points immediately, or player $B$ wins two points immediately, and this has probability $p^2+q^2$. Otherwise, it reaches deuce again with probability $2pq$. Thus, if the game has to end at $n=(6 + 2k)$ points, then the number of times that a deuce is reached after 6 points is $(k-1)$, and the probability of this event is $(2pq)^{k-1} (p^2+q^2)$. Thus the probability that the game ends after $6+2k$ is $20p^3q^3 (2pq)^{k-1} (p^2+q^2)$. 
\end{proof}

From this PMF, parameters of the distribution of $N_G$, such as the mean ($\mu_G$), variance ($\sigma_G^2$), and standard deviation ($\sigma_G$), as functions of $p$, could be obtained. Closed-form expressions for the mean and variance can be obtained, though the expressions may still contain many (but finite) terms. These expressions are presented in the following theorem.

\begin{theorem}
\label{theo: mean and variance for game number of points}
For the tennis game system, the mean and variance of the number of points played are
$$\mu_G(p) = \sum_{n=4}^6 n {{n-1} \choose 3} \left[p^4q^{n-4} + q^4p^{n-4}\right] +
    \left[6 + \frac{2}{p^2+q^2}\right] {6 \choose 3} p^3q^3;$$
%
%
\begin{eqnarray*}
        \sigma_G^2(p) 
        & = & \left[\left\{ \sum_{n=4}^6 n^2 {{n-1} \choose 3} \left[p^4q^{n-4} + q^4p^{n-4}\right] \right.\right. + \\ && \left.\left. \left[6 + \frac{2}{p^2+q^2}\right]^2 {6 \choose 3} p^3q^3\right\} - \mu_G^2(p)\right] + 4 \left[\frac{1-(p^2+q^2)}{(p^2+q^2)^2}\right] {6 \choose 3} p^3q^3.
    \end{eqnarray*}
Also, $\mu_G(p) = \mu_G(q)$ and $\sigma_G^2(p) = \sigma_G^2(q).$
\end{theorem}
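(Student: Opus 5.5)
The plan is to condition on how the game terminates, using the Iterated Rules for Mean and Variance. Define a discrete random variable $Z$ that records the terminal stage of the game. It takes the value $n \in \{4,5,6\}$ if the game ends at the $n$th point without reaching deuce, and the value $D$ if the score reaches 3-3, i.e., on the event $T_6$ from the proof of Theorem \ref{thm: prob wins game}. From the preceding PMF theorem, $\Pr\{Z=n\} = {{n-1} \choose 3}[p^4q^{n-4}+q^4p^{n-4}]$ for $n=4,5,6$, and $\Pr\{Z=D\} = {6 \choose 3}p^3q^3$. On $\{Z=n\}$ with $n\le 6$, $N_G = n$ deterministically, so $E[N_G|Z=n]=n$ and $\mathrm{Var}(N_G|Z=n)=0$.

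On $\{Z=D\}$, I will write $N_G = 6 + 2K$, where $K$ is the number of two-point rounds played after the first deuce. By the argument in the proof of Proposition \ref{prop: game tie-breaker ends}, each round independently ends the game with probability $r = p^2+q^2$ and returns to deuce with probability $2pq = 1-r$. Hence, conditional on $Z=D$, $K$ is geometric on $\{1,2,\ldots\}$ with success probability $r$. Its standard moments give $E[K]=1/r$ and $\mathrm{Var}(K)=(1-r)/r^2$. Therefore
$$E[N_G|Z=D] = 6+\frac{2}{p^2+q^2} \quad \mbox{and} \quad \mathrm{Var}(N_G|Z=D)=\frac{4(1-(p^2+q^2))}{(p^2+q^2)^2}.$$
The geometric law is also exactly what the PMF gives for $n=6+2k$, since $(2pq)^{k-1}(p^2+q^2)$ matches, so this step is consistent with the earlier theorem.

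Next I apply the iterated rules. $\mu_G = E[E[N_G|Z]]$ gives the stated mean directly. For the variance I use $\sigma_G^2 = \mathrm{Var}(E[N_G|Z]) + E[\mathrm{Var}(N_G|Z)]$. The first term equals $E[(E[N_G|Z])^2]-\mu_G^2$, which is exactly the bracketed expression in the statement: the sum of $n^2\Pr\{Z=n\}$ over $n=4,5,6$, plus $(6+2/r)^2{6\choose 3}p^3q^3$, minus $\mu_G^2$. The second term is nonzero only on $Z=D$, and it contributes $4(1-r)/r^2 \cdot {6\choose 3}p^3q^3$. This yields the displayed formula.

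Finally, for the symmetry claims I will observe that every ingredient is invariant under interchanging $p$ and $q$. These ingredients are $p^4q^{n-4}+q^4p^{n-4}$, $p^3q^3$, and $r=p^2+q^2$. Hence $\mu_G(p)=\mu_G(q)$ and $\sigma_G^2(p)=\sigma_G^2(q)$. The only mildly delicate step is identifying the conditional law on $\{Z=D\}$ as geometric. That step relies on the independence and identical distribution of the $V_i$ after the sixth point, i.e., on the same shift-invariance $(V_1,V_2,\ldots)\stackrel{d}{=}(V_3,V_4,\ldots)$ used in the proof of Proposition \ref{prop: game tie-breaker ends}. The boundary cases $p\in\{0,1\}$ are trivial, since then $\Pr\{Z=D\}=0$.
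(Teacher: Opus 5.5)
Your proposal is correct and takes essentially the same route as the paper. The paper also conditions on the terminal score $SC$ (your $Z$), uses $N_{GT} \stackrel{d}{=} 2\,Ge(p^2+q^2)$ independent of $SC$, applies the iterated rules for mean and variance, and reads the $p \leftrightarrow q$ symmetry directly off the resulting formulas.
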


\begin{proof}
    Let $SC = (h_A,h_B)$ represent the event that player $A$ (the server) scores $h_A$ points and player $B$ scores $h_B$ points. Then we could decompose $N_G$ via
    $$N_G = \sum_{n=4}^6 n I\{(SC=(4,n-4)) \cup (SC=(n-4,4))\} + (6 + N_{GT}) I\{SC=(3,3)\},$$
    where $N_{GT}$ is the number of points played in the GT. Observe that $N_{GT} \stackrel{d}{=} 2Ge(p^2+q^2)$, where $Ge(\eta)$ is a geometric random variable with success probability $\eta$, which has PMF $\Pr\{Ge(\eta) = n\} = (1-\eta)^{n-1} \eta, n=1,2,\ldots$. Furthermore, note that $SC$ and $N_{GT}$ are independent of each other. We now employ the Iterated Rules for Mean and Variance (cf., Ross \cite{Ross1998}), conditioning on $SC$, to obtain the mean and variance of $N_G$, and recalling that
    $E\{Ge(\eta)\} = {1}/{\eta}$ and $Var\{Ge(\eta)\} = {(1-\eta)}/{\eta^2}.$
    We have
     \begin{eqnarray*} E(N_G|SC) & = & \sum_{n=4}^6 n I\{(SC=(4,n-4)) \cup (SC=(n-4,4))\} + \\ && \left[6 + \frac{2}{p^2+q^2}\right] I\{SC=(3,3)\}
    \end{eqnarray*}
   so that
    $$\mu_G(p) = \sum_{n=4}^6 n {{n-1} \choose 3} \left[p^4q^{n-4} + q^4p^{n-4}\right] +
    \left[6 + \frac{2}{p^2+q^2}\right] {6 \choose 3} p^3q^3.$$
    Also, we get 
    \begin{eqnarray*} 
    Var(N_G|SC) & = & 4\left[\frac{1-(p^2+q^2)}{(p^2+q^2)^2}\right] I\{SC=(3,3)\}.
    \end{eqnarray*}
    Consequently,
    \begin{eqnarray*}
        \sigma_G^2(p) & = & Var[E(N_G|SC)] + E[Var(N_G|SC)] \\
        & = & \left\{ \sum_{n=4}^6 n^2 {{n-1} \choose 3} \left[p^4q^{n-4} + q^4p^{n-4}\right] + \left[6 + \frac{2}{p^2+q^2}\right]^2 {6 \choose 3} p^3q^3 - \mu_G^2(p)\right\} + \\
        && 4 \left[\frac{1-(p^2+q^2)}{(p^2+q^2)^2}\right] {6 \choose 3} p^3q^3.
    \end{eqnarray*}
    The last two identities in the statement of the theorem are immediately clear from the formulas of $\mu_G(p)$ and $\sigma_G^2(p)$.
\end{proof}

To illustrate, in Figure \ref{fig: game probability and game number of points} the top right plot is the PMF of $N_G$ when $p = .5$, while the bottom plots are the mean and standard deviation plots of $N_G$ as $p$ ranges over $(0,1)$. When $p = .5$, on average the game will take the longest duration, with the mean number of points being 6.750, variance of 7.6875, and the standard deviation of 2.7726. Later, in section \ref{sec: efficiency and comparison}, we will compare the performance of the tennis game system with Best-of-$K$ systems.

As a way to summarize some of the stochastic properties of a tennis game, we could have Table \ref{tab: game suimmary}, where, for $h \in \{0,1,2\}$,
\begin{eqnarray*}
   & \theta_G(4,h;p)  =  {{4+h-1} \choose 3} p^4 q^h; \
    \theta_G(h,4;p)  =  {{4+h-1} \choose 3} p^h q^4; & \\
   & \theta_G(h;p)  =  \theta_G(4,h;p) + \theta_G(h,4;p); & \\
   &\theta_G(GT_A;p) = {6 \choose 3} p^3 q^3 \theta_{GT}(p);\ \theta_G(GT_B;p) = {6 \choose 3} p^3 q^3 (1- \theta_{GT}(p)); & \\
   & \theta_{GT}(p) =  \frac{p^2}{p^2+q^2} = \frac{ODDS(p)^2}{1 + ODDS(p)^2}; & \\
   & \theta_G(GT;p) = \theta_G(GT_A;p) + \theta_G(GT_B;p); & \\
   & \theta_G(p) = \sum_{h=0}^2 \theta_G((4,h);p) + \theta_G(GT_A;p); & \\
    & 1 - \theta_G(p) = \sum_{h=0}^2 \theta_G((h,4);p) + \theta_G(GT_B;p); & \\
    \end{eqnarray*}
    \begin{eqnarray*}
& \mu_G(h;p) = h + 4; \ \sigma_G^2(n;p) = 0, h = 0, 1, 2; & \\
   & \mu_G(GT;p) =  6 + \frac{2}{p^2+q^2}; \ \sigma_G^2(GT;p) = \frac{8pq}{(p^2+q^2)^2}; & \\
   & \mu_G(p) = \sum_{h=0}^2 \theta_G(h;p) \mu_G(h) + {6 \choose 3} p^3 q^3 \mu_G(GT;p); & \\
   & \sigma_G^2(p) = \left[ \sum_{h=0}^2 \theta_G(h;p) \sigma_G^2(h;p) + {6 \choose 3} p^3 q^3 \sigma_G^2(GT;p) \right] & \\ & + \left[ \left\{ \sum_{h=0}^2 \theta_G(h;p) \mu_G(h;p)^2 + {6 \choose 3} p^3 q^3 \mu_G(GT;p)^2 \right\}  - \mu_G(p)^2\right]; & \\
   & \sigma_G(p) = + \sqrt{\sigma_G^2(p)}. &
\end{eqnarray*}
\begin{table}[h]
\begin{center}
\begin{tabular}{||c||c|c||c||c|c||}\hline\hline
Loser's & \multicolumn{3}{c||}{Probability} & \multicolumn{2}{c||}{Number of Points} \\
 \cline{2-4} \cline{5-6}
Score, $h$ & $A$ wins & $B$ wins & $A$ or $B$ wins & CMean & CVariance \\ \hline\hline
0 & $\theta_G(4,0;p)$ & $\theta_G(0,4;p)$ & $\theta_G(0;p)$ & $\mu_G(0;p)$ & $\sigma_G^2(0;p)$ \\
1 & $\theta_G(4,1;p)$ & $\theta_G(1,4;p)$ & $\theta_G(1;p)$ & $\mu_G(1;p)$ & $\sigma_G^2(1;p)$ \\
2 & $\theta_G(4,2;p)$ & $\theta_G(2,4;p)$ & $\theta_G(2;p)$ & $\mu_G(2;p)$ & $\sigma_G^2(2;p)$ \\ \hline
TB & $\theta_{G}(GT_A;p)$ & $\theta_{G}(GT_B;p)$ & $\theta_G(GT;p)$ & $\mu_G(GT;p)$ & $\sigma_G^2(GT;p)$ \\ \hline\hline
Overall & $\theta_G(p)$ & $1-\theta_G(p)$ & 1 & $\mu_G(p)$ & $\sigma_G^2(p)$ \\ \hline\hline
\end{tabular}
\caption{Summary of stochastic characteristics for a Tennis Game, with the server's probability of winning point being $p$.}
\label{tab: game suimmary}
\end{center}
\end{table}

For a concrete example, suppose that the serving player (player $A$) has probability of $p=.6$ of winning the point. The game's stochastic characteristics are in Table \ref{tab: game stats p=.6}.

\begin{table}[h]
\begin{center}
\begin{tabular}{||c||c|c||c||c|c||}\hline\hline
Loser's & \multicolumn{3}{c||}{Probability} & \multicolumn{2}{c||}{Number of Points} \\ \cline{2-4} \cline{5-6}
Score, $h$ & $A$ wins & $B$ wins & $A$ or $B$ wins & CMean & CVariance \\ \hline\hline
       0 & 0.129 & 0.025 & 0.155 & 4.000 & 0.000 \\
       1 & 0.207 & 0.061 & 0.268 & 5.000 & 0.000 \\
       2 & 0.207 & 0.092 & 0.299 & 6.000 &  0.000 \\ \hline
      TB & 0.191 & 0.085 & 0.276 & 9.846  & 7.100 \\ \hline\hline
 Overall & 0.735 & 0.264 & 1.000 & 6.484 &  6.708 \\ \hline\hline
 \end{tabular}
 \caption{Stochastic characteristics for a game with the serving player (player $A$) having $p=.6$ probability of winning point.}
\label{tab: game stats p=.6}
 \end{center}
 \end{table}

\section{Set Tie-Breaker and Set}
\label{sec: Set}

When the score in a set reaches six games each for players $A$ and $B$, the set goes into a set tie-breaker (ST). STs hold a particular allure in tennis, especially after the classic fourth set ST in the 1980 Wimbledon Finals Match between the icy and cool Swedish Bjorn Borg versus the fiery and temperamental American John McEnroe which ended on a score of 18 points to 16 points in favor of McEnroe, with McEnroe saving five match points, and with this ST lasting for 20 minutes (cf., \cite{BorgMcEnroeRivalry}). Borg, however, won the fifth set, and hence the Wimbledon championship for that year. 

Serving sequence in an ST is either $ABBAABBAA...$ or $BAABBAABB...$, with the player starting the serve being the receiver in the last game played. For a specified $K$, which is usually $K \in \{7, 10\}$ (7 if the set is not the deciding set for the match, while 10 if it is the deciding set for the match), the first player to reach $K$ points and have an advantage of at least two points wins the tie-breaker. However, if the two players each win $(K-1)$ points, then the ST goes into a set tie-breaker's tie-breaker (STT). In the STT, the serving sequence continues, and the winner of this STT, hence of the ST and the set, is the first to get an advantage of two points.

\subsection{Set Tie-Breaker's Tie-Breaker}

First, observe that the STT is different from the GT, since in the latter the same player always serves, whereas in an STT, each player has an opportunity to serve according to the service sequence described above. As in the GT, there is the question of whether this additional tie-breaker will actually end with probability one, which if this is so, will guarantee that the whole match will end with probability one, since we have already shown that GTs end with probability one.
We first resolve this question. We denote by $\theta_{STT}^{END}(p_A,p_B)$ the probability that this STT will end, when player $A$, who has probability of $p_A$ of winning the point on his/her serve, serves first, while player $B$, whose probability of winning the point on his/her serve is $p_B$, receives first.

\begin{proposition}
    If it is not the case that $(p_A=p_B=0)$ nor $(p_A = p_B = 1)$, then $\theta_{STT}^{END}(p_A,p_B) = 1.$ Furthermore, the result still holds true if the serving sequence is permuted for any adjacent odd/even pairs, e.g., $ABBA$ into $BABA$.
\end{proposition}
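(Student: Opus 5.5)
The STT starts at a $(K-1)$-$(K-1)$ tie, and it ends at the first time one player leads by two points. We study it in consecutive pairs of points, exactly as in the proof of Proposition \ref{prop: game tie-breaker ends}.

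First we check the serving structure. In the sequence $ABBAABBAA\ldots$ each server serves two points in a row after the first point. The tie at $(K-1)$-$(K-1)$ occurs after an even number $2K-2$ of points, so from then on the serving pattern, grouped into consecutive pairs, is always one $A$-serve and one $B$-serve in some order ($AB$ or $BA$). By independence, the pair's outcome distribution does not depend on this order. So each pair consists of one point served by $A$, with outcome $V\sim B(1,p_A)$, and one point served by $B$, with outcome $W\sim B(1,p_B)$, independent of each other and of all other pairs.

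The STT can end only at the end of a pair, because after an odd number of points the lead is exactly one. This holds because the STT starts tied, and after each pair without a winner the score is tied again. A pair ends the STT exactly when one player wins both points. The event "player $A$ wins both" is $\{V=1,W=0\}$, with probability $p_Aq_B$. The event "player $B$ wins both" is $\{V=0,W=1\}$, with probability $q_Ap_B$. Otherwise the players split the pair, with probability $r=p_Ap_B+q_Aq_B$, and the score returns to a tie.

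Now let $E_n$ be the event that the first $n-1$ pairs are split and the $n$th pair is not. Using the shift-invariance in distribution of the i.i.d.\ pair sequence, the same recursion as before gives
\[
\theta_{STT}^{END}(p_A,p_B)=(p_Aq_B+q_Ap_B)+r\,\theta_{STT}^{END}(p_A,p_B).
\]
When $r<1$ this gives $\theta_{STT}^{END}=(1-r)/(1-r)=1$, since $p_Aq_B+q_Ap_B+r=(p_A+q_A)(p_B+q_B)=1$. Equivalently, $\Pr\{\text{never ends}\}=\lim_n r^n=0$.

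The main step is to show that $r<1$ exactly under the stated hypothesis. We have $r=1$ iff $p_Aq_B+q_Ap_B=0$, that is, iff $p_Aq_B=0$ and $q_Ap_B=0$. Going through the cases, this happens only when $p_A=p_B=0$ or $p_A=p_B=1$. For example, $p_A=0$ forces $q_A=1$ and hence $p_B=0$. In the two excluded cases every pair is split deterministically and the STT never ends, so the hypothesis is sharp.

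The permutation claim follows from the same fact: swapping the servers within any adjacent odd/even pair leaves each pair with one $A$-served and one $B$-served point, so the pair law, and therefore the whole argument, is unchanged. The only real care needed is the bookkeeping that the STT begins at an even index, so that the pairs align with odd/even positions; we will state this explicitly.
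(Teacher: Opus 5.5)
Your proof is correct and follows essentially the same route as the paper. Like the paper, you group the STT points into odd/even pairs, each containing one $A$-serve and one $B$-serve, derive $\theta_{STT}^{END}=(p_Aq_B+q_Ap_B)+(p_Ap_B+q_Aq_B)\,\theta_{STT}^{END}$, and get the permutation claim from the fact that swapping servers within a pair leaves the pair's law unchanged (the paper formalizes this with the maps $T$ and $\Pi$). Your explicit checks fill in details the paper only asserts: that the pairs align because the STT begins after $2K-2$ points, and that $p_Ap_B+q_Aq_B<1$ holds exactly outside the two excluded cases.
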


\begin{proof}
    The proof is pedagogically facilitated by looking at the tree diagram in Figure \ref{fig: STT diagram}, though this diagram could be reduced to depth 2 only, which leads to the desired recursion. We retained the depth 4 diagrams more for pedagogical purposes and since they coincide with the way the STT is played.
\begin{figure}[h]
    \centering
\includegraphics[width=.5\textwidth,height=.3\textwidth]{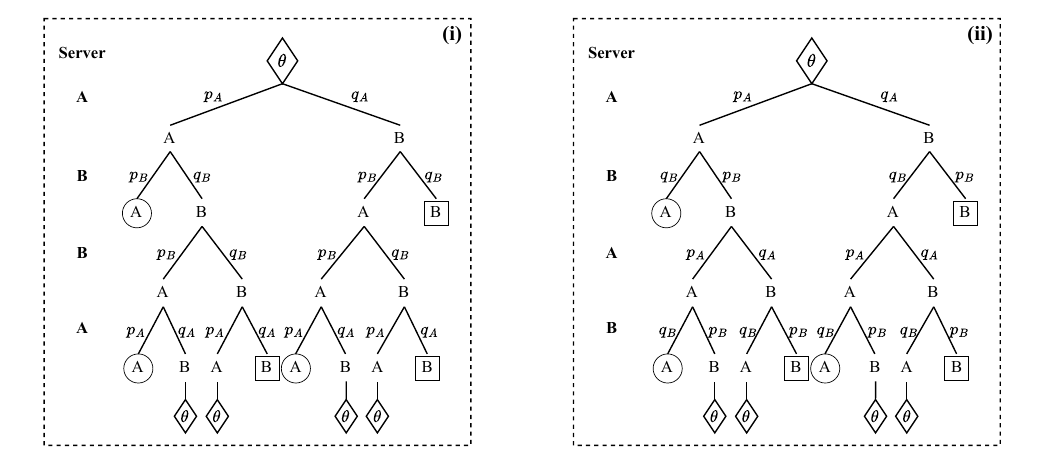}
    \caption{Tree diagram for the set tie-breaker tie-breaker. (i) If the serving sequence follows $ABBAABB \ldots$. (ii) If the serving sequence follows alternate serve ($ABABAB \ldots$).}
    \label{fig: STT diagram}
\end{figure}

To formalize the proof, let
$$Z \equiv (Z_1,Z_2,Z_3,Z_4),\ldots) = ((V_1,W_1),(W_2,V_2),(V_3,W_3),(W_4,V_4),\ldots)).$$
The random variables constituting $Z$ are independent, with $V_i$s IID $B(1,p_A)$ and $W_i$s IID $B(1,p_B)$. Define two mappings on $z = (v,w)$ given by
$$T(z = (v,w)) = T(z=(w,v)) = v+ (1-w) \quad \mbox{and} \quad \pi(z) \in \{(v,w),(w,v)\}.$$
That is, $T(z)$ is the number of wins for player $A$ in two successive odd/even points, while $\pi(z)$ permutates the elements of $z$. Observe the invariance property of these mappings given by $T(z) = T(\pi(z))$. Then define the mappings on the whole sequence $Z$ via
$$\Pi(Z) = (\pi_1(Z_1),\pi_2(Z_2),\ldots) \quad \mbox{and} \quad T(Z) = (T(Z_1),T(Z_2),\ldots).$$
Note that we are allowing different permutations for each of the components of $Z$. Then, again, observe the invariance property
$$T(\Pi(Z)) = T(Z).$$

Now, define the disjoint events
$$E_1(Z) = \{T(Z_1) \in \{0,2\}\};$$ 
$$E_n(Z) = \left[\cap_{l=1}^{n-1} E_l^c(Z)\right] \cap \{T(Z_n) \in \{0,2\}\}, n=2,3,\ldots,$$
so we also have
$$E_n(Z) = E_n(\Pi(Z)), n=1,2, \ldots.$$
Observe that
$$\Pr\{E_1(Z)|(p_A,p_B)\} = \Pr\{E_1(\Pi(Z))|(p_A,p_B)\} = (p_Aq_B + q_Ap_B);$$
$$\Pr\{(E_1(Z))^c|(p_A,p_B)\} = \Pr\{(E_1(\Pi(Z)))^c|(p_A,p_B)\} = (p_Ap_B + q_Aq_B).$$
Then
\begin{eqnarray*}
 \lefteqn{   \theta_{STT}^{END}(p_A,p_B)  =  \Pr\left\{ \cup_{n=1}^\infty E_n(Z) |(p_A,p_B)\right\} 
     =  \sum_{n=1}^\infty \Pr\{E_n(Z)|(p_A,p_B)\} } \\
    & = & \Pr\{E_1(Z)|(p_A,p_B)\} + \sum_{n=2}^\infty \Pr\{E_n(Z)|(p_A,p_B)\} \\
     & = & (p_Aq_B + q_Ap_B) + \Pr\{E_1^c(Z)|(p_A,p_B)\} 
     \sum_{n=2}^\infty \Pr\{E_n(Z)|E_1^c(Z),(p_A,p_B)\} \\
    & = & (p_Aq_B + q_Ap_B) + (p_Ap_B + q_Aq_B) 
    \sum_{n=1}^\infty \Pr\{E_n(S_1(Z))|(p_A,p_B)\} 
\end{eqnarray*}
where we have the shift operator
$S_1(Z) = S_1(Z_1,Z_2,Z_3,\ldots) = (Z_2,Z_3,Z_4,\ldots).$
Since
$T(Z) \stackrel{d}{=} T(S_1(Z)) \stackrel{d}{=} T(\Pi(Z)) \stackrel{d}{=} T(\Pi(S_1(Z))),$
then 
$\sum_{n=1}^\infty \Pr\{E_n(S_1(Z))|(p_A,p_B)\} = \theta_{STT}^{END}(p_A,p_B).$
We therefore obtain the equation (also seen from the diagram)
$$\theta_{STT}^{END}(p_A,p_B) = (p_Aq_B + q_Ap_B) + (p_Ap_B + q_Aq_B) \theta_{STT}^{END}(p_A,p_B)$$
leading to
$$\theta_{STT}^{END}(p_A,p_B) = \frac{p_Aq_B + q_Ap_B}{1 - (p_Ap_B + q_Aq_B)} = 1,$$
provided that neither $(p_A=1,p_B=1)$ nor $(p_A=0,p_B=0)$ is true.
\OLDPROOF{From this diagram, we obtain the equation
\begin{eqnarray*}
\lefteqn{ \theta_{STT}^{END}(p_A,p_B)  = } \\ 
&& p_Aq_B + q_Ap_B + \\ && p_Ap_Bq_Bp_A + p_Ap_Bp_Bq_A + q_Aq_Bq_Bp_A + q_Aq_Bp_Bq_A + \\
&& (p_Ap_Bq_Bq_A + p_Ap_Bp_Bp_A + q_Aq_Bq_Bq_A + q_Aq_Bp_Bp_A) \theta_{STT}^{END}(p_A,p_B) \\
& = & (p_Aq_B + p_Bq_A)(1 + p_Ap_B + q_Aq_B) + \\
&& (p_Ap_B +q_Aq_B)(q_Bq_A + p_Bp_A) \theta_{STT}^{END}(p_A,p_B).
\end{eqnarray*}
If neither $(p_A=p_B=0)$ nor $(p_A=p_B=1)$ is true, then $(p_Ap_B +q_Aq_B)(q_Bq_A + p_Bp_A) \ne 1$, so we could solve for $\theta_{STT}^{END}(p_A,p_B)$ which yields
\begin{eqnarray*}
\theta_{STT}^{END}(p_A,p_B) & = & \frac{(p_Aq_B + p_Bq_A)(1 + p_Ap_B + q_Aq_B)}{1 - (p_Ap_B +q_Aq_B)(q_Bq_A + p_Bp_A)} \\
& = & \frac{(p_Aq_B + p_Bq_A)(1 + p_Ap_B + q_Aq_B)}{[1 - (p_Ap_B +q_Aq_B)][1 + (p_Ap_B +q_Aq_B)]} \\
& = & \frac{(p_Aq_B + p_Bq_A)}{[1 - (p_Ap_B +q_Aq_B)]} \\
& = & \frac{(p_Aq_B + p_Bq_A)}{[(p_A+q_A)(p_B+q_B) - (p_Ap_B +q_Aq_B)]} \\
& = & \frac{(p_Aq_B + p_Bq_A)}{(p_Aq_B + p_Bq_A)} \\
& = & 1.
\end{eqnarray*}
}
Since the distributional results remain invariant under permutations of successive odd/even pairs under the mapping $\Pi$, the second claim in the statement of the proposition holds true.
Note that if $(p_A=p_B=0)$, then each player {\em always} loses the point when serving; while if $(p_A=p_B=1)$, then each player {\em always} wins the point when serving; so in either case, neither player will be able to achieve an advantage of two points, hence the STT never ends, but then in this case, an STT is never reached for the set!
\end{proof}

Next, we calculate the probability that player $A$, when serving first, wins the STT, denoted by $\theta_{STT}(p_A,p_B)$.  

\begin{proposition} \label{A wins STT}
If A serves first in the STT, and assuming that neither of $(p_A=p_B=0)$ nor $(p_A=p_B=1)$ is true, then the probability of A winning the STT is
$$\theta_{STT}(p_A,p_B) = \frac{OR(p_A,p_B)}{1 + OR(p_A,p_B)}
\quad \mbox{and} \quad
\theta_{STT}(p_A,p_B) =  \theta_{STT}(q_B,q_A).$$
Furthermore, the result is invariant under permutation of serving order for every successive odd/even pairs.
\end{proposition}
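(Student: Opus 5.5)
I would mirror the proof of the preceding proposition on the termination of the STT, reusing the pair sequence $Z=((V_1,W_1),(W_2,V_2),\ldots)$, the count $T(Z_k)=v+(1-w)$ of points won by $A$ in the $k$-th odd/even pair, and the shift operator $S_1$. Because the STT starts from a tie, $A$ wins exactly when the first pair with $T(Z_k)\neq 1$ has $T(Z_k)=2$. Accordingly I define the disjoint events
$$F_n(Z)=\left[\cap_{l=1}^{n-1}\{T(Z_l)=1\}\right]\cap\{T(Z_n)=2\},\quad n=1,2,\ldots,$$
so that $\theta_{STT}(p_A,p_B)=\sum_{n\ge1}\Pr\{F_n(Z)\}$. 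This uses the fact, from the previous proposition, that the STT ends with probability one; that fact guarantees the union of the $F_n$ together with the analogous events for $B$ has probability one.

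\textbf{Recursion.} Within a pair, $A$ serves one point and receives the other, so $\Pr\{T(Z_1)=2\}=p_Aq_B$, $\Pr\{T(Z_1)=0\}=q_Ap_B$, and $\Pr\{T(Z_1)=1\}=p_Ap_B+q_Aq_B$. Condition on $Z_1$, use independence of $Z_1$ from $S_1(Z)$, and use $T(S_1(Z))\stackrel{d}{=}T(Z)$, exactly as in the earlier proof. This gives
$$\theta_{STT}=p_Aq_B+(p_Ap_B+q_Aq_B)\,\theta_{STT},$$
and hence
$$\theta_{STT}(p_A,p_B)=\frac{p_Aq_B}{1-p_Ap_B-q_Aq_B}=\frac{p_Aq_B}{p_Aq_B+q_Ap_B}.$$
The denominator is nonzero under the stated exclusion of $(0,0)$ and $(1,1)$. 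Dividing numerator and denominator by $q_Ap_B$ yields $OR/(1+OR)$ when $q_Ap_B>0$. The boundary cases where $q_Ap_B=0$ but the denominator is positive give $\theta_{STT}=1$, which agrees with the formula read with $OR=\infty$; I would remark on this briefly.

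\textbf{Symmetry identity.} I would verify $\theta_{STT}(q_B,q_A)=\theta_{STT}(p_A,p_B)$ directly from the closed form: replacing $(p_A,p_B)$ by $(q_B,q_A)$ sends $p_Aq_B\mapsto q_Bp_A$ and $q_Ap_B\mapsto p_Bq_A$, leaving the expression unchanged. Equivalently, $OR(q_B,q_A)=\frac{q_B/p_B}{q_A/p_A}=OR(p_A,p_B)$.

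\textbf{Invariance under serving order.} This follows as in the previous proposition. The $F_n$ depend on $Z$ only through $T(Z)$, and $T(\Pi(Z))=T(Z)$ for any pairwise permutation $\Pi$, so $\Pr\{F_n(\Pi(Z))\}=\Pr\{F_n(Z)\}$.

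The only step needing care is justifying the conditioning and shift step rigorously and treating the degenerate boundary values of $p_A,p_B$. Both are handled by the same arguments already used for $\theta_{STT}^{END}$, so I expect no real obstacle.
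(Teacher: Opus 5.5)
Your proposal is correct and follows the paper's proof essentially step for step. Your $F_n$ coincide with the paper's $E_n^\prime$, since $\cap_{l<n}E_l^c$ is exactly the event that the first $n-1$ pairs are split; you use the same one-step recursion $\theta_{STT}=p_Aq_B+(p_Ap_B+q_Aq_B)\theta_{STT}$ and the same invariance argument via $T(\Pi(Z))=T(Z)$. Your checks on the nonvanishing denominator and the $OR=\infty$ boundary case are slightly more careful than the paper, though the termination result is not actually needed to write $\theta_{STT}=\sum_n\Pr\{F_n\}$.
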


\begin{proof}
    The proof of this result is very similar to the proof of the preceding proposition. We just need to re-define the events to be
    $$E_1^\prime(Z) =  \{T(Z) = 2\};$$
    $$E_n^\prime(Z) = \left[\cap_{l=1}^{n-1} E_l^c(Z)\right] \cap \{T(Z_n) = 2\}, n=2,3,\ldots,$$
    where the events $E_l(Z), l=1,2,\ldots$ are as in the proof of the preceding proposition. Again, note that
    $$E_n^\prime(Z) = E_n^\prime(\Pi(Z)), n=1,2,\ldots.$$
    Following the same steps as in the preceding proof, we obtain the equation (see also Figure \ref{fig: STT diagram})
    $$\theta_{STT}(p_A,p_B) = p_Aq_B + (p_Ap_B + q_Aq_B) \theta_{STT}(p_A,p_B)$$
    leading to
    \begin{eqnarray*}
    \theta_{STT}(p_A,p_B) & = & \frac{p_Aq_B}{1 - (p_Ap_B + q_Aq_B)} 
     =  \frac{p_Aq_B}{p_Aq_B + q_Ap_B} 
     =  \frac{OR(p_A,p_B)}{1+OR(p_A,p_B)}.
    \end{eqnarray*}

\OLDPROOF{It amounts to dropping the terms associated with player $B$ winning the tie-breaker's tie-breaker (see Figure \ref{fig: STT diagram} where the $B$ is enclosed by a square), so dropping the terms $q_Ap_B$, $p_Ap_Bp_Bq_A$, and $q_Aq_Bp_Bq_A$. Doing so, then computing $\theta_{STT}(p_A,p_B)$, we obtain
    \begin{eqnarray*}
    \theta_{STT}(p_A,p_B) & = & \frac{(p_Aq_B)(1 + p_Ap_B + q_Aq_B)}{1 - (p_Ap_B +q_Aq_B)(q_Bq_A + p_Bp_A)} \\
& = & \frac{(p_Aq_B)(1 + p_Ap_B + q_Aq_B)}{[1 - (p_Ap_B +q_Aq_B)][1 + (p_Ap_B +q_Aq_B)]} \\
& = & \frac{p_Aq_B}{p_Aq_B + q_Ap_B} 
 =  \frac{OR(p_A,p_B)}{1 + OR(p_A,p_B)}.
\end{eqnarray*}
}
The identity $\theta_{STT}(p_A,p_B) = \theta_{STT}(q_B,q_A)$ is immediate from the derived equation, or this could also be seen from the fact that $$T(Z_n) = (V_n + (1-W_n))|(p_A,p_B) \stackrel{d}{=} (V_n + (1-W_n))|(q_B,q_A), n=1,2,\ldots.$$ The serving order invariance follows from $E_n^\prime(Z) = E_n^\prime(\Pi(Z)), n=1,2,\ldots.$
\end{proof}

\begin{corollary}
If $p_A = p_B = p \in (0,1)$, so the players are of equal abilities, then $\theta_{STT}(p,p) = 1/2,$ for every $p \in (0,1)$, that is, they are equally likely to win the STT. Thus, when of equal abilities, but not perfect on their serves, there is no advantage to whether the player serves or receives first.
\end{corollary}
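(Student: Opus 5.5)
The corollary follows by specializing Proposition \ref{A wins STT} to the diagonal $p_A = p_B = p$. First I check that the proposition applies. If $p \in (0,1)$, then neither $(p_A=p_B=0)$ nor $(p_A=p_B=1)$ holds, so the closed form
$$\theta_{STT}(p_A,p_B) = \frac{p_Aq_B}{p_Aq_B + q_Ap_B} = \frac{OR(p_A,p_B)}{1+OR(p_A,p_B)}$$
is valid. Setting $p_A=p_B=p$ gives $OR(p,p) = (p/q)/(p/q) = 1$. This is well defined because $0<p<1$ makes $q>0$ and $p/q>0$. Hence $\theta_{STT}(p,p) = 1/(1+1) = 1/2$. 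Equivalently, the numerator $pq$ is half of the denominator $pq+qp = 2pq > 0$.

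As a cross-check I would also use the symmetry part of Proposition \ref{A wins STT}, namely $\theta_{STT}(p_A,p_B)=\theta_{STT}(q_B,q_A)$. A role-swapping argument gives the same answer without computation. The probability that $B$ wins the STT when $A$ serves first is obtained by relabelling the players, so the two win probabilities add to $1$ because the STT ends with probability one (preceding proposition). When $p_A=p_B$, the within-pair exchange invariance shows that who serves first in each odd/even pair does not matter. Relabelling $A\leftrightarrow B$ then leaves the distribution of $T(Z)$ mapped to that of $2-T(Z)$. Each block outcome has $\Pr\{T=2\}=pq=\Pr\{T=0\}$, so $A$ and $B$ have equal winning probabilities, each $1/2$.

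The interpretive claim, that there is no first-server advantage, also follows from the serving-order invariance in Proposition \ref{A wins STT}. Swapping the roles in the first pair (so $B$ serves first) is a permutation $\Pi$ of the kind allowed there. The winning probability is therefore unchanged, and it equals $1/2$ for either starting server. The only step needing care is excluding the degenerate cases $p\in\{0,1\}$, where the denominator $2pq$ vanishes and the STT never ends. This is why the statement restricts to $p\in(0,1)$. Beyond that there is no real obstacle.
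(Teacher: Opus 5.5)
Your proposal is correct and is essentially what the paper intends. The corollary is stated without a separate proof because it is immediate from Proposition~\ref{A wins STT}: $OR(p,p)=1$ forces $\theta_{STT}(p,p)=pq/(2pq)=1/2$ for $p\in(0,1)$. One small imprecision is in your last paragraph. Having $B$ serve first ($BAABBAA\ldots$) swaps every odd/even pair, not only the first one. Since $\Pi$ may permute each pair independently, your invariance argument still covers it.
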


\OLDPROOF{
\begin{proposition} [Serving order invariance]
    If the serving sequence in the set tie-breaker’s tie-breaker follows $ABABABAB \ldots$, that is, the players simply alternate serve, and assuming that neither of $(p_A=p_B=0)$ nor $(p_A=p_B=0)$ is true, then the probability of A winning the set tie-breaker's tie-breaker remains the same as under the original serving order. In particular,
    $$\theta_{STT}(p_A,p_B) = \frac{OR(p_A,p_B)}{1 + OR(p_A,p_B)}.$$
    Thus, the serving order does not affect the probability that A wins.
\end{proposition}

\begin{proof}
    The proof mirrors that of Proposition~\ref{A wins STT}. Based on the tree diagram in Figure~\ref{fig: STT diagram} (ii), we again obtain 
    $$\theta_{STT}(p_A,p_B) = \frac{(p_Aq_B)(1 + p_Ap_B + q_Aq_B)}{1 - (p_Ap_B +q_Aq_B)(q_Bq_A + p_Bp_A)}.$$

    The algebraic simplification proceeds identically and leads to the same expression
    $$\theta_{STT}(p_A,p_B) = \frac{OR(p_A,p_B)}{1 + OR(p_A,p_B)}.$$
    
    Therefore, alternate serving does not change the outcome probabilities.
\end{proof}
}

\subsection{Set Tie-Breaker}

We now consider the set tie-breaker (ST) with parameters $(p_A,p_B,K)$, where the player to reach at least $K$ points wins the ST, provided that he/she is ahead by at least 2 points, and the player serving first has probability $p_A$ of winning the point. If the score is tied at $(K-1)$ points each, then the STT ensues.

Define the following functions over the positive integers:
$$s_A(n) = \mbox{number of times that player A serves in the first $n$ points};$$
$$s_B(n) = n - s_A(n) = \mbox{number of times that player B serves in the first $n$ points};$$
and
$$I_A(n) = \mbox{indicator that player A is serving on the $n$th point}.$$
The following results concerning the number of service points and who is serving are immediate based on the $ABBAABBAA...$ serving sequence.

\begin{lemma}
    If player A serves first in the ST, then
    $$s_A(n) = \sum_{j=1}^n I\{(j \bmod 4) \in \{0,1\}\}; \quad s_B(n) = n - s_A(n); \quad 
    I_A(n) = I\{(n \bmod 4) \in \{0,1\}\}.$$
\end{lemma}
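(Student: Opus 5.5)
The plan is to derive the lemma directly from the periodic structure of the serving sequence $ABBAABBAA\ldots$. I would first record the server of the first few points: point 1 is served by $A$, points 2 and 3 by $B$, and points 4 and 5 by $A$. After the opening point the pattern is $BB$, $AA$, $BB$, $AA$, and so on. It follows that the server of point $n$ depends only on $n \bmod 4$, because the block $ABBA$ repeats with period $4$, and the whole sequence is the concatenation $ABBA\,ABBA\,ABBA\ldots$.

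To make this rigorous, I would write $n = 4m + r$ with $r \in \{1,2,3,4\}$, so that point $n$ is the $r$th point of the $(m+1)$st block $ABBA$. An induction on $m$ shows that this block decomposition agrees with the rule the paper describes: the first server serves once, and thereafter players alternate in pairs. The transition across a block boundary, from $\ldots A \mid A \ldots$, is exactly one such ``pair'' of $A$ serves. Within a block, the positions $r=1$ and $r=4$ are $A$, and these correspond to $n \bmod 4 \in \{1,0\}$. This gives $I_A(n) = I\{(n \bmod 4) \in \{0,1\}\}$.

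The remaining two identities then follow by definition. Summing the indicator over the first $n$ points gives $s_A(n) = \sum_{j=1}^n I_A(j)$, which is the stated sum. Since every point is served by exactly one player, $s_B(n) = n - s_A(n)$.

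The only mildly delicate step is checking that the informal ``one serve, then pairs'' rule coincides with the period-4 block description, in particular at block boundaries. I would handle this by verifying the first five points explicitly and then invoking periodicity. If wanted, I would also give the closed form $s_A(n) = 2\lfloor n/4\rfloor + \min(n \bmod 4, 1) + I\{n \bmod 4 = 3\}\cdot 0$, which can be checked case by case on the residues. That closed form is optional and is not needed for the lemma as stated.
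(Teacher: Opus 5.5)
Your proposal is correct and takes the same route as the paper, which reads the lemma directly off the period-4 structure $ABBA\,ABBA\ldots$ of the serving sequence (with $A$ serving exactly when $n \bmod 4 \in \{0,1\}$), obtains $s_A$ by summing $I_A$, and gets $s_B$ as the complement. Your optional closed form for $s_A(n)$ is also right, though the term $I\{n \bmod 4 = 3\}\cdot 0$ is identically zero and can be dropped.
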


We are now in position to determine the probability that the first to serve in a $K$-point ST will win the ST. We denote this probability by $\theta_{ST}(p_A,p_B,K)$.

\begin{proposition}
\label{prop: set tie-breaker probs}
    If player A serves first in the set tie-breaker, the probability that he/she wins the set tie-breaker is
    \begin{eqnarray*}
    \theta_{ST}(p_A,p_B,K)  & = &  \sum_{h=0}^{K-2} \theta_{ST}(K,h;p_A,p_B,K) + 
     \theta_{ST}(K-1,K-1;p_A,p_B,K) \times \\ &&
     [\theta_{STT}(p_A,p_B)]^{I_A(2K-1)} [1-\theta_{STT}(p_B,p_A)]^{1-I_A(2K-1)},
    \end{eqnarray*}
    where
    \begin{eqnarray*}
   \lefteqn{     \theta_{ST}(K,h;p_A,p_B,K) \equiv  \Pr\{\mbox{$A$ wins on a $(K,h)$ score}\}  } \\ & = & \left\{
        \sum_{j=0}^{K-1} {s_A(K+h-1) \choose j} {s_B(K+h-1) \choose {(K-1) - j}} \times \right. \\
        && \left.
        \left[p_A^j q_A^{s_A(K+h-1) - j}\right]
        \left[q_B^{(K-1)-j} p_B^{s_B(K+h-1) - ((K-1)-j)}\right]
        \right\}  \left[p_A^{I_A(K+h)} q_B^{1-I_A(K+h)}\right];
    \end{eqnarray*}
    \begin{eqnarray*}
   \lefteqn{  \theta_{ST}(K-1,K-1;p_A,p_B,K)  \equiv   \Pr\{(K-1,K-1)\ \mbox{score}\} } \\ & = &
       \left\{
        \sum_{j=0}^{K-1} {{K-1} \choose j} {{K-1} \choose {j}}
        \left[p_A^j q_A^{(K-1) - j}\right]
        \left[q_B^{(K-1)-j} p_B^{j}\right]
        \right\}.
    \end{eqnarray*}
    Alternatively, with $B(K-1,p_A)$ and $B(K-1,q_B)$ independent binomial random variables,
    \begin{eqnarray*}
        \sum_{h=0}^{K-2} \theta_{ST}(K,h;p_A,p_B,K) =
        \Pr\{B(K-1,p_A) + B(K-1,q_B) \ge K\}.
    \end{eqnarray*}
In addition,
    $\theta_{ST}(p_A,p_B,K) = \theta_{ST}(q_B,q_A,K).$
\end{proposition}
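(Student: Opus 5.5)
The plan is to decompose the event $\{A \mbox{ wins ST}\}$ according to the final score, as was done for the game in Theorem \ref{thm: prob wins game}. This event is the disjoint union of the events $\{A \mbox{ wins on a } (K,h) \mbox{ score}\}$ for $h=0,\ldots,K-2$, together with the event that the score reaches $(K-1,K-1)$ and $A$ then wins the STT.

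For a fixed $h$, the event that $A$ wins on score $(K,h)$ means two things: among the first $K+h-1$ points $A$ won exactly $K-1$, and $A$ wins point $K+h$. By the Lemma, $A$ serves $s_A(K+h-1)$ of the first $K+h-1$ points and $B$ serves $s_B(K+h-1)$ of them. I would condition on $j$, the number of $A$'s wins on his/her own serve. Then $A$ won $(K-1)-j$ points on $B$'s serve. Independence of the $V_i$ and $W_j$ gives a product of two binomial probabilities, with success probabilities $p_A$ and $q_B$. Summing over $j$ is the Theorem of Total Probability. The last point is won with probability $p_A$ if $I_A(K+h)=1$ and $q_B$ otherwise, which yields the displayed formula.

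The score $(K-1,K-1)$ is reached after $2K-2$ points. A parity check on the $ABBA$ pattern shows that $s_A(2K-2)=s_B(2K-2)=K-1$. Requiring $A$ to have exactly $K-1$ wins gives $j$ wins on $A$'s serve and $j$ losses by $B$ on $B$'s serve, hence the symmetric binomial-squared sum. The STT then starts at point $2K-1$.
\begin{itemize}
\item If $I_A(2K-1)=1$, $A$ serves first in the STT, and by Proposition \ref{A wins STT} $A$ wins with probability $\theta_{STT}(p_A,p_B)$.
\item Otherwise $B$ serves first, and $A$ wins with probability $1-\theta_{STT}(p_B,p_A)$.
\end{itemize}
By the serving-order invariance in Proposition \ref{A wins STT}, whether the STT continues the $ABBA$ pattern or starts with a $BA$ pair does not matter. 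Conditional independence of the STT points from the first $2K-2$ points justifies multiplying the two probabilities.

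For the alternative binomial form, I would use a completion argument. Imagine playing all $2K-2$ points through the $(K-1,K-1)$ stage regardless of stopping, so $A$ receives exactly $K-1$ serves of each type. Then $A$ reaches $K$ points before $B$ does, within those $2K-2$ points, exactly when the total $B(K-1,p_A)+B(K-1,q_B)\ge K$. This is because $A$ totalling at least $K$ forces $B$ to total at most $K-2$. The extra phantom points do not affect who reaches $K$ first.

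The symmetry $\theta_{ST}(p_A,p_B,K)=\theta_{ST}(q_B,q_A,K)$ would follow by noting that every factor depends on $(p_A,q_B)$ only through the pair $(p_A,q_B)$ and its complement $(q_A,p_B)$. Swapping $p_A\leftrightarrow q_B$ leaves each term invariant, combined with $\theta_{STT}(p_A,p_B)=\theta_{STT}(q_B,q_A)$. One extra check is needed: $1-\theta_{STT}(p_B,p_A)=p_Aq_B/(p_Aq_B+q_Ap_B)$ is also invariant under this swap.

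The main obstacle I expect is the bookkeeping of the serving pattern: verifying $s_A(2K-2)=K-1$, and checking that the binomial-completion argument is legitimate. The latter is a path-counting argument that needs to be stated carefully.
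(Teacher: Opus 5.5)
Your decomposition by final score, the conditioning on $j$, the count $s_A(2K-2)=s_B(2K-2)=K-1$, the handling of the STT factor via $I_A(2K-1)$, and the completion argument for the alternative binomial form all match the paper's proof. There is one slip in the tie count: at $(K-1,K-1)$, $B$ must \emph{win} $j$ of its own serves (i.e.\ lose $(K-1)-j$) to produce the factor $q_B^{(K-1)-j}p_B^{j}$.

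The symmetry step, however, is not justified as written. If ``each term'' means the summands $\theta_{ST}(K,h;p_A,p_B,K)$, the claim is false. Swapping $p_A\leftrightarrow q_B$ (equivalently $q_A\leftrightarrow p_B$) does not preserve them, for two reasons: $s_A(K+h-1)\neq s_B(K+h-1)$ whenever $K+h-1$ is odd, and the last-point factor $p_A^{I_A(K+h)}q_B^{1-I_A(K+h)}$ singles out one of the two. For example, with $K=7$, $h=0$, points $1,\ldots,7$ are served by $A,B,B,A,A,B,B$. Hence $\theta_{ST}(7,0;p_A,p_B,7)=p_A^3q_B^4$, which the swap sends to $p_A^4q_B^3$.

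Only the sum over $h$ is invariant, and the way to see this is the alternative form you had just derived. Set $S=B(K-1,p_A)+B(K-1,q_B)$ and use $1-\theta_{STT}(p_B,p_A)=\theta_{STT}(p_A,p_B)$. Then $\theta_{ST}(p_A,p_B,K)=\Pr\{S\ge K\}+\Pr\{S=K-1\}\,\theta_{STT}(p_A,p_B)$. The law of $S$ is unchanged when the success probabilities $p_A$ and $q_B$ are exchanged, precisely because both binomials have the same number $K-1$ of trials; this, together with your check that $\theta_{STT}(p_A,p_B)=p_Aq_B/(p_Aq_B+q_Ap_B)$ is swap-invariant, finishes the argument. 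This is how the paper argues, and it remarks explicitly that verifying the identity directly from the $(K,h)$ formula is not straightforward. So route the symmetry claim through the completion argument, not a term-by-term check.
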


\begin{proof}


For $h \in \{0,1,\ldots,K-2\}$, define the event
$$F_h = \{\mbox{$A$ wins with a score of $(K,h)$}\} \quad \mbox{and} \quad 
T_{2(K-1)} = \{\mbox{score tied at $(K-1:K-1)$\}.}$$
Then
\begin{eqnarray*}
\theta_{ST}(p_A,p_B,K) & = & \sum_{h=0}^{K-2} \Pr\{F_h|(p_A,p_B,K)\} + \\ && 
\Pr\{T_{2(K-1)}|(p_A,p_B,K)\} \Pr\{\mbox{$A$ wins STT}|(p_A,p_B,K)\}.
\end{eqnarray*}
For $A$ to win with a score of $(K,h)$, he/she must win $(K-1)$ points in the first $(K+h-1)$ points played, for which he/she will be serving a total of $s_A(K+h-1)$ of these points, and then he/she must also win the $(K+h)$th point played. That is,
\begin{eqnarray*}
F_h & = & \left\{\sum_{i=1}^{s_A(K+h-1)} V_i + \sum_{j=1}^{s_B(K+h-1)} (1 - W_j) = K-1\right\} \cap \\ &&
\left\{(I_A(K+h) = 1, V_{s_A(K+h-1)+1} = 1) \cup 
(I_A(K+h) = 0, 1-W_{s_B(K+h-1)+1} = 1)\right\}.
\end{eqnarray*}
The probability of this event is
\begin{eqnarray*} 
\lefteqn{\Pr\{F_h|(p_A,p_B,K)\}  \equiv \theta_{ST}(K,h;p_A,p_B,K) } \\
& = & \Pr\{B(s_A(K+h-1),p_A) + B(s_B(K+h-1),q_B) = K-1\} p_A^{I_A(K+h)} q_B^{1-I_A(K+h)},
\end{eqnarray*}
where the binomial random variables $B(s_A(K+h-1),p_A)$ and $B(s_B(K+h-1),q_B)$ are independent. Thus,
\begin{eqnarray*}
\lefteqn{ \theta_{ST}(K,h;p_A,p_B,K)  =   
\left[\sum_{l=0}^{K-1}
{{s_A(K+h-1)} \choose l} p_A^l q_A^{{s_A(K+h-1)}-l} \times \right. } \\ && \left. {{s_B(K+h-1)} \choose {K-1-l}} q_B^{K-1-l} p_B^{s_B(K+h-1)-(K-1-l)}\right]  p_A^{I_A(K+h)} q_B^{1-I_A(K+h)}.
\end{eqnarray*}
Similarly, since $s_A(2(K-1)) = s_B(2(K-1)) = K-1$, we obtain
\begin{eqnarray*}
   \Pr\{T_{2(K-1)}|(p_A,p_B,K)\} & \equiv & \theta_{ST}(K-1,K-1;p_A,p_B,K) \\  & = & \Pr\{B(K-1,p_A) + B(K-1,q_B) = K-1\}  \\
    & = & \left[\sum_{l=0}^{K-1}
{{K-1} \choose l} p_A^l q_A^{{K-1}-l} {{K-1} \choose {l}} q_B^{K-1-l} p_B^{l}\right].
\end{eqnarray*}

Finally, we note that if the ST reaches an STT, then we need to determine who serves first in this STT, which is indicated by $I_A(2K-1)$. Thus, we have 
\begin{eqnarray*}
\Pr\{\mbox{$A$ wins STT}|(p_A,p_B,K)\} & = & [\theta_{STT}(p_A,p_B)]^{I_A(2K-1)} [1-\theta_{STT}(p_B,p_A)]^{1 - I_A(2K-1)} \\ 
& = & [\theta_{STT}(p_A,p_B)]^{I_A(2K-1)} [1-\theta_{STT}(q_B,q_A)]^{1 - I_A(2K-1)}.   
\end{eqnarray*}

To establish $\theta_{ST}(p_A,p_B,K) = \theta_{ST}(q_B,q_A,K)$, it suffices to show that
$$\sum_{h=0}^{K-2} \Pr\{F_h|(p_A,p_B,K)\} = 
\sum_{h=0}^{K-2} \Pr\{F_h|(q_B,q_A,K)\}.$$
We establish this using an alternative way of looking at how a player could win based on looking at the complete paths of length $2(K-1)$ points.

Consider all paths or would-be paths of length $2(K-1)$. Note that some of these paths will never reach $2(K-1)$ points since it is possible that one player had already won $K$ points. However, think of extending the path until $2(K-1)$ points. Over these $2(K-1)$ points, each player will serve a total of $(K-1)$ times. 
Player $A$ then wins if
$\sum_{i=1}^{K-1} V_i + \sum_{j=1}^{K-1} (1-W_j) \ge K$
while player $B$ wins if
$\sum_{i=1}^{K-1} (1 - V_i) + \sum_{j=1}^{K-1} W_j \ge K.$
The other paths will lead to a tied score with each player having $(K-1)$ points. Under $(p_A,p_B)$ the probability of $A$ winning, without going to an STT, is
$$\Pr\{B(K-1,p_A) + B(K-1,q_B) \ge K\},$$
which is the alternative expression provided in the statement of the proposition for $\sum_{h=0}^{K-2} \Pr\{F_h|(p_A,p_B,K)\}$.
But note that this is also the probability of $A$ winning, without going to an STT, 
{\em under} $(q_B,q_A)$. As such, we have established, via an alternative viewpoint, that 
$$\sum_{h=0}^{K-2} \Pr\{F_h|(p_A,p_B,K)\} = 
\sum_{h=0}^{K-2} \Pr\{F_h|(q_B,q_A,K)\}.$$
We note that showing {\em directly} the equality above using the formula for $$\sum_{h=0}^{K-2} \Pr\{F_h|(p_A,p_B,K)\}$$ seems not a straightforward matter.

   
    \OLDPROOF{ The first expression follows from the addition rule of probability and the decomposition of the event that $A$ wins the set tie-breaker.

    Let us now consider the event that $A$ wins with a $(K,h)$ score, where $h \le (K-2)$. In such a case a total of $K+h$ points were played, and it must be the case that $A$ wins on the $(K+h)$th point. $A$ served this point if $I_A(K+h) = 1$. Thus, the probability that $A$ won on the $(K+h)$the point is 
    $$p_A^{I_A(K+h)} [1 - p_B]^{1 - I_A(k+h)}.$$
    On the first $(K+h-1)$ points, the total number of times that $A$ served is $s_A(K+h-1)$, while the number of $B$ serves is $s_B(K+h-1)$. The possible ways of getting a score of $(K,h)$ is for $A$ to win $j$ points on his/her serves, lose $s_A(K+h-1) - j$ of his/her serves, win $(K-1-j)$ of $B$'s serves, and lose $s_B(K+h-1) - ((K-1)-j)$ of $B$'s serves. This could happen for $j=0,1,2,\ldots,K-1$, with the understanding that the event has zero probability if $j > s_A(K+h-1)$ or $(K-1-j) > s_B(K+h-1)$. The number of possible configurations in which $A$ wins $j$ points on his/her serves is
    $${s_A(K+h-1) \choose j} {s_B(K+h-1) \choose {(K-1) - j}}$$
    and each of these configurations has probability
    $$\left[p_A^j q_A^{s_A(K+h-1) - j}\right]
        \left[q_B^{(K-1)-j} p_A^{s_B(K+h-1) - ((K-1)-j)}\right].$$
Summing all the probabilities for all the configurations results in the expression for $\Pr\{\mbox{$A$ wins on a $(K,h)$ score}\}$ in the statement of the proposition.

Finally, we have the last situation in which $A$ wins the tie-breaker, which is when both players reach $(K-1)$ points, which has probability (by the same analysis as above)
\begin{eqnarray*}
   \lefteqn{ \sum_{j=0}^{K-1} \left\{ {s_A(2K-2) \choose j} {s_B(2K-2) \choose {(K-1) - j}} \times \right. } \\ && \left.
        \left[p_A^j q_A^{s_A(2K-2) - j}\right]
        \left[q_B^{(K-1)-j} p_A^{s_B(2K-2) - ((K-1)-j)}\right]
        \right\}
\end{eqnarray*}
which then gets multiplied by the probability that $A$ wins the tie-breaker's tie-breaker. On the $[2(K-1)+1] = (2K -1)$th point, the server is indicated by $I_A(2K-1)$, so the probability that player $A$ will win the tie-breaker's tie-breaker is
$$[\theta_{STT}(p_A,p_B)]^{I_A(2K-1)} [1 - \theta_{STT}(p_B,p_A)]^{1 - I_A(2K-1)]},$$
since if player $B$ is serving on the $(2K-1)$th point, $B$ must lose the tie-breaker's tie-breaker, hence the component probability of $[1 - \theta_{STT}(p_B,p_A)]^{1 - I_A(2K-1)]}$ in the preceding equation.
Multiplying the two preceding displayed equations yield $\Pr\{(K-1,K-1)\ \mbox{score; and $A$ wins}\}$.

\RED{To establish the final identity in the statement of the proposition, first recall that $\theta_{STT}(p_A,p_B) = \theta_{STT}(q_B,q_A)$. Then we complete the proof by showing that
$$\sum_{h=0}^{K-2} \theta_{ST}(K,h;p_A,p_B,K) = \sum_{h=0}^{K-2} \theta_{ST}(K,h;q_B,q_A,K);$$
$$\theta_{ST}(K-1,K-1;p_A,p_B,K) = \theta_{ST}(K-1,K-1;q_B,q_A,K).$$

\BLUE{
Observe that $\sum_{h=0}^{K-2} \theta_{ST}(K,h;p_A,p_B,K)$ is the probability that $A$ wins the set tie-breaker without going into the set tie-breaker's tie-breaker (STT). Let $V_1, V_2, \ldots$ denote the independent Bernoulli random variables with parameter $p_A$ representing the outcomes when $A$ is serving the point, with $V_i = I\{\mbox{$A$ wins point}\}$. Similarly, let $W_1, W_2, \ldots$ denote the independent Bernoulli random variables with parameter $p_B$ representing the outcomes when $B$ is serving the point, with $W_j = I\{\mbox{$B$ wins point}\}$. Then, since over $2(K-1)$ points, each player will have $(K-1)$ service points, the event that $A$ wins the tie-breaker before an STT is equal to the event
$$\left\{ \sum_{i=1}^{K-1} V_i + \sum_{j=1}^{K-1} (1-W_j) \ge K\right\}.$$
Observe that $(1-W_j), j=1,2,\ldots$ are independent Bernoullis with parameter $q_B = 1 - p_B$. Thus,
\begin{eqnarray*}
\sum_{h=0}^{K-2} \theta_{ST}(K,h;p_A,p_B,K) & = &
\Pr\{BIN(K-1,p_A) + BIN(K-1,q_B) \ge K\} \\
& = & \Pr\{BIN(K-1,q_B) + BIN(K-1,p_A) \ge K\} \\
& = &\sum_{h=0}^{K-2} \theta_{ST}(K,h;q_B,q_A,K).
\end{eqnarray*}
This proves the first identity.

The second identity above follows immediately since
\begin{eqnarray*}
\lefteqn{ \theta_{ST}(K-1,K-1;p_A,p_B,K)  = } \\
& = &\Pr\{BIN(K-1,p_A) + BIN(K-1,q_B) =  K-1\} \\
& = & \Pr\{BIN(K-1,q_B) + BIN(K-1,p_A) = K-1\} \\
& = & \theta_{ST}(K-1,K-1;q_B,q_A,K).
\end{eqnarray*}
}}

}
\end{proof}

Figure \ref{fig: contour set tie breaker} presents a contour plot of $\theta_{ST}$ as $p_A$ and $p_B$ range over $0.01$ to $0.99$ and when $K = 7$. The probabilities were computed using an {\tt R \cite{R}} program we developed implementing the formula in the preceding proposition. 
\begin{figure}[h]
    \centering  \includegraphics[width=.4\textwidth,height=.4\textwidth]{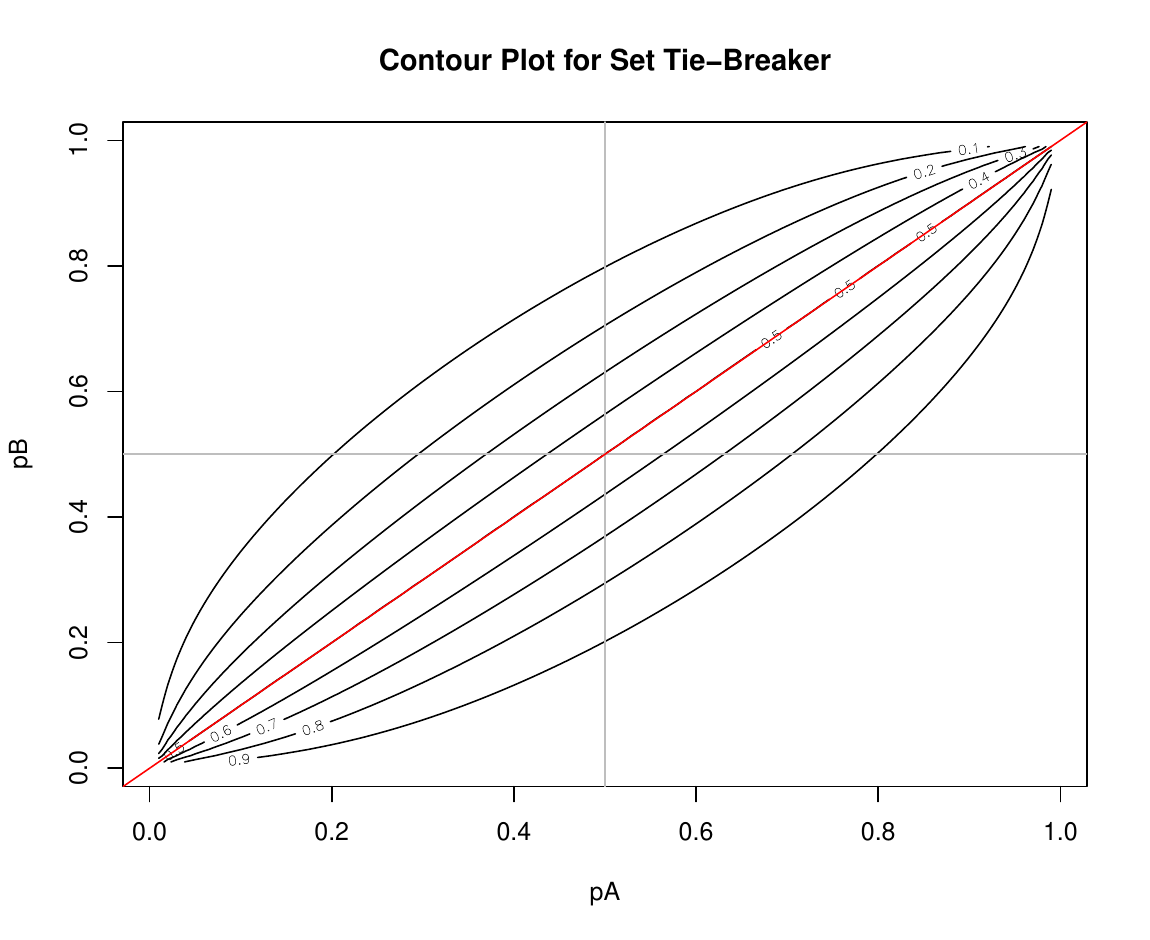}
    \caption{Contour plot of the probability of first server winning the set tie-breaker when $K = 7$. $pA$ is the probability of first server winning point on his/her serve; while $pB$ is the probability of the first receiver winning point on his/her serve.}
    \label{fig: contour set tie breaker}
\end{figure}
Observe that when $p_A = p_B$, the probability of the server winning the set tie-breaker is exactly $1/2$, indicating that the set tie-breaker does not endow an advantage to the first server, in essence, the set tie-breaker is a fair system. This result is formally stated and proved in the following corollary.

\begin{corollary}
\label{coro: under pA=pB for theta_ST}
If $p_A = p_B = p$, then $\theta_{ST}(p,p,K) = 1/2$ for every $p \in (0,1)$ and $K \ge 2$.
\end{corollary}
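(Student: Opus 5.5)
The plan is to use the symmetry identity $\theta_{ST}(p_A,p_B,K)=\theta_{ST}(q_B,q_A,K)$ from Proposition \ref{prop: set tie-breaker probs} together with a complementary ``swap the players'' identity, and then specialize to $p_A=p_B=p$.

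\textbf{Step 1: the swap identity.} First I would establish
$$\Pr\{\mbox{$B$ wins the ST}\mid (p_A,p_B)\}=\theta_{ST}(q_A,q_B,K).$$
Relabel each point by whether $B$ wins it. On $A$'s serves $B$ wins with probability $q_A$, and on $B$'s serves with probability $q_B$. The serving pattern is unchanged, so the first server is still the player whose serve-point success is $p_A$.

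\textbf{Step 2: work with the decomposition rather than the relabeling.} The relabeling is cleanest through the decomposition in Proposition \ref{prop: set tie-breaker probs}. The non-STT part for $B$ is
$$\Pr\{B(K-1,q_A)+B(K-1,p_B)\ge K\},$$
which has the same form as $A$'s non-STT part with $(p_A,q_B)$ replaced by $(q_A,p_B)$. That is exactly $A$'s non-STT probability evaluated at $(q_A,q_B)$. For the tie term, $\Pr\{T_{2(K-1)}\}$ is likewise invariant under the swap. For the STT part, Proposition \ref{A wins STT} gives $A$'s win probability as $p_Aq_B/(p_Aq_B+q_Ap_B)$ and $B$'s as $q_Ap_B/(p_Aq_B+q_Ap_B)$, which is again the first expression at $(q_A,q_B)$. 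The exponent $I_A(2K-1)$ is the same on both sides.

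\textbf{Step 3: complement.} Since the ST ends with probability one (the STT-ending proposition, valid for $p\in(0,1)$), we get
$$1-\theta_{ST}(p_A,p_B,K)=\theta_{ST}(q_A,q_B,K).$$

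\textbf{Step 4: specialize.} Put $p_A=p_B=p$. The symmetry identity gives $\theta_{ST}(p,p,K)=\theta_{ST}(q,q,K)$, and Step 3 gives $\theta_{ST}(q,q,K)=1-\theta_{ST}(p,p,K)$. Hence $2\theta_{ST}(p,p,K)=1$.

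\textbf{Alternative direct computation.} With $p_A=p_B=p$, the non-STT probability for $A$ is $\Pr\{X+Y\ge K\}$ where $X\sim B(K-1,p)$ and $Y\sim B(K-1,q)$. Since $K-1-Y\sim B(K-1,p)$ independently of $X$, this equals $\Pr\{X-X'\ge 1\}$ with $X'\sim B(K-1,p)$ i.i.d.\ with $X$. By exchangeability, $\Pr\{X-X'\ge 1\}=\Pr\{X-X'\le -1\}$, which is $B$'s non-STT probability. The STT term contributes $1/2$ regardless of who serves first, by the Corollary following Proposition \ref{A wins STT}. Therefore
$$\theta_{ST}(p,p,K)=\tfrac12\Pr\{X\ne X'\}+\tfrac12\Pr\{X=X'\}=\tfrac12.$$

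\textbf{Main obstacle.} The delicate step is justifying that $B$'s non-STT win event corresponds exactly to the extended-path event $\{\sum_i(1-V_i)+\sum_j W_j\ge K\}$ over $2(K-1)$ points. This requires the observation, already used in the paper's proof of the proposition, that a player reaching $K$ points before point $2(K-1)$ still has at least $K$ points on the extended path, and conversely. I would present the direct exchangeability computation as the main proof because it avoids any bookkeeping about who serves when.
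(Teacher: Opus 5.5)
Your proposal is correct. The argument you designate as the main one, the direct exchangeability computation, is essentially the paper's own proof. The paper writes both $\theta_{ST}(p,p,K)$ and $1-\theta_{ST}(p,p,K)$ in terms of the extended $2(K-1)$-point path. It notes that the second expression is the first with the roles of $\sum_i V_i$ and $\sum_j W_j$ interchanged, and these are identically distributed when $p_A=p_B$. It then uses $\theta_{STT}(p,p)=1/2$. Your substitution $X'=K-1-Y=\sum_j W_j$ is the same symmetry. The only difference is that you use it to evaluate $\theta_{ST}(p,p,K)=\tfrac12\Pr\{X\ne X'\}+\tfrac12\Pr\{X=X'\}$ directly, instead of showing $\theta_{ST}=1-\theta_{ST}$.

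Your Steps 1--4 take a genuinely different route, and they prove more. Apply your complement identity $1-\theta_{ST}(x,y,K)=\theta_{ST}(1-x,1-y,K)$ at $(x,y)=(p_B,p_A)$ and combine it with the proposition's identity $\theta_{ST}(p_A,p_B,K)=\theta_{ST}(q_B,q_A,K)$. This gives $\theta_{ST}(p_A,p_B,K)+\theta_{ST}(p_B,p_A,K)=1$ for every non-degenerate $(p_A,p_B)$. In words, a given player's chance of winning the tie-breaker does not depend on who serves first, and the corollary is the special case $p_A=p_B$.

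This general fact is what the paper later relies on, without stating it as a result, when it asserts that it does not matter who serves first in a set. The fact carries over from tie-breaker to set because each player serves five of the first ten games and one each of games 11 and 12. The price of this route is length: it uses the proposition's $q$-symmetry, whose proof already rests on the same extended-path device. So it is not more elementary than the direct argument, only more informative.

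One small slip: in Step 1, $B$ wins $B$'s own service points with probability $p_B$, not $q_B$. The correct statement is that in the relabeled tie-breaker the second server's serve-success parameter is $q_B$, so the relabeled first server wins those points with probability $1-q_B=p_B$. That is what your Step 2 computation actually verifies, so the conclusion is unaffected.
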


\begin{proof}
    Using the notation in the proof of Proposition \ref{prop: set tie-breaker probs} but with $V_i$s IID $Ber(p)$ and $W_j$s IID $Ber(p)$, we have that
    \begin{eqnarray*}
     \theta_{ST}(p,p,K) & = & \Pr\left\{\sum_{i=1}^{K-1} V_i + \sum_{j=1}^{K-1} (1-W_j) \ge K\right\} +  \\ &&  \Pr\left\{\sum_{i=1}^{K-1} V_i + \sum_{j=1}^{K-1} (1-W_j) = K-1\right\} \theta_{STT}(p,p)
     \end{eqnarray*}
    where $\theta_{STT}(p,p) = 1/2$. The probability that $B$ wins the tie-breaker, which will be $1 - \theta_{ST}(p,p,K)$ is given by
    \begin{eqnarray*}
    \lefteqn{  1 - \theta_{ST}(p,p,K)  = } \\ &&  \Pr\left\{\sum_{i=1}^{K-1} (1-V_i) +   \sum_{j=1}^{K-1} W_j \ge K\right\} +  \\ &&  \Pr\left\{\sum_{i=1}^{K-1} (1-V_i) + \sum_{j=1}^{K-1} W_j = K-1\right\} \theta_{STT}(p,p) \\
     & = & \Pr\left\{ \sum_{j=1}^{K-1} W_j + \sum_{i=1}^{K-1} (1-V_i) \ge K\right\} + \\ && \Pr\left\{\sum_{j=1}^{K-1} W_j + \sum_{i=1}^{K-1} (1-V_i) = K-1\right\} \theta_{STT}(p,p) \\ & = & \theta_{ST}(p,p,K).
     \end{eqnarray*}
Thus, it follows that $\theta_{ST}(p,p,K) = 1/2$.
\end{proof}

\subsection{Number of Points in a Set Tie-Breaker}

Let $N_{ST}$ be the number of points played in a set tie-breaker with parameter $(p_A,p_B,K)$. We seek the PMF of $N_{ST}$, that is, 
$$p_{N_{ST}}(n|p_A,p_B,K) = \Pr\{N_{ST} = n|p_A,p_B,K\},$$
as well as some of its basic characteristics such as its mean ($\mu_{ST}$), variance ($\sigma_{ST}^2$), and standard deviation ($\sigma_{ST}$). This will further shed some probabilistic light in the classic Borg-McEnroe tie-breaker, considered one of the best set tie-breaker ever played in a Grand Slam tennis tournament, as well as other set tie-breakers that were lengthy. From {\em Wikipedia} \cite{wiki:longest}, the longest set tie-breakers that were played in Grand Slam Tournaments (US Open, Wimbledon in the United Kingdom, French Open, and Australian Open) are enumerated below, with the tie-breaker score in {\bf bold font}. It should be noted that in all four Grand Slam tournaments, starting in March 2022 (effectively in May 2022 during the French Open) \cite{ESPN2022}, the final set (fifth set for men; third set for women) tie-breakers became 10-point tie-breakers. Prior to March 2022, these Grand Slam tournaments have different final set tie-break systems. From the article \cite{ESPN2022}, ``the Australian Open used the 10-point tiebreak, the US Open employed a traditional, seven-point tiebreak even in the final set. Wimbledon featured a seven-point tiebreak when the score reached 12-12 in the final set of matches at the All England Lawn Tennis Club. The French Open has been the only major without a final-set tiebreak, with matches continuing until a player secures a two-game lead in the decider.''

\medskip

\noindent
{\bf Wimbledon:} Bj{o}rn Borg defeated Premjit Lall 6–3, 6–4, 9–8 {\bf (20-18)} in the first round of the men's singles in 1973. John McEnroe and Borg took over 20 minutes to complete a fourth set tiebreaker in the 1980 men's singles final, with McEnroe winning {18–16}, but Borg eventually won the match 1–6, 7–5, 6–3, 6–7 {\bf (16-18)}, 8–6. John Isner won {19–17} in a first set tiebreaker against Jarkko Nieminen in the second round in 2014, eventually winning the match 7–6 {\bf (19-17)}, 7–6 (7–3), 7–5. Alexander Zverev won 17—15 in a third set tiebreaker against Cameron Norrie in the third round in 2024, winning the match 6—4, 6—4, 7—6 {\bf (17-15)}.

\smallskip

\noindent{\bf US Open:} Goran Ivanišević and Daniel Nestor played a 20–18 tiebreaker in their 1993 US Open first round match, won by Ivanišević 6–4, 7–6(7–5), 7–6 {\bf (20-18)}. Ken Flach won the deciding tie-break {17-15} over Darren Cahill during their second round match at the 1987 US Open. Flach survived 1–6, 6–4, 3–6, 6–1, 7–6 {\bf (17-15)}  saving five match points. Novak Djokovic won the first-set tiebreaker 16–14 over Alexandr Dolgopolov during their fourth round matchup at the 2011 US Open. Djokovic won the match in straight sets 7–6 {\bf (16-14)}, 6–4, 6–2, and eventually the tournament. In the 2012 US Open men's singles final, Andy Murray won the first set tiebreaker against Djokovic 12–10, which was the longest tiebreak in a US Open final match, and went on to win the match 7–6 {\bf (12-10)}, 7–5, 2–6, 3–6, 6–2. 

\smallskip

\noindent
{\bf Australian Open:} Jo-Wilfried Tsonga won a tiebreaker 20–18 in his first round match against Andy Roddick at the 2007 Australian Open, but Roddick eventually won the match, 6–7 {\bf (18-20)}, 7–6(7–2), 6–3, 6–3. Pedro Martínez won a tie break 17–15 in his first round match against Federico Delbonis at the 2022 Australian Open. Martinez won 7–6 {\bf (17–15)}, 3–6, 6–4, 6–2. Rafael Nadal won 16–14 in a first-set tiebreaker against Adrian Mannarino in the fourth round in 2022, and Nadal eventually winning the match 7–6 {\bf (16-14)}, 6–2, 6–2. 

\smallskip

\noindent
{\bf French Open:} Lorenzo Sonego won the third set tiebreaker {19-17} against Taylor Fritz in their third round match at the 2020 French Open. Sonego won 7–6(7–5), 6–3, 7–6 {\bf (19-17)}. The previous longest tie-break at the French Open had a score of {\bf 16–14}, which happened twice. 

\medskip

We now present the probability mass function of the number of points required in a $K$-point set tie-breaker under the player probabilities of $(p_A,p_B)$, with player $A$ being the first server.

\begin{theorem}
    For a $(p_A,p_B,K)$ set tie-breaker, with player $A$ serving first,
\begin{eqnarray*}
    \lefteqn{ p_{N_{ST}}(n|p_A,p_B,K) = \Pr\{\mbox{Set Tie-Breaker Ends After $n$ points}|p_A,p_B,K\} } \\ & = &
    \left[\theta_{ST}(K,n-K;p_A,p_B,K) + \theta_{ST}(n-K,K;p_A,p_B,K)\right] I\{K \le n \le 2(K-1)\} + \\ &&
   \left[\theta_{ST}(K-1,K-1;p_A,p_B,K) \Pr\{N_{STT} = 2\left(n - (2K-2)\right)/2\} \right] 
   I\{n \ge 2K;\ n\ \mbox{even}\},
\end{eqnarray*}
with
$$\Pr\{N_{STT} = 2L\} =  (p_Ap_B + q_Aq_B)^{L-1} (p_Aq_B + q_Ap_B) \ \mbox{for}\ L=1,2,\ldots.$$
\end{theorem}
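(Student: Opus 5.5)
The plan is to partition the event $\{N_{ST}=n\}$ according to how the set tie-breaker terminates, using the same two-stage decomposition as in Proposition \ref{prop: set tie-breaker probs}. The ST ends in one of two mutually exclusive ways:
\begin{itemize}
\item one player reaches $K$ points while the other has $h \le K-2$ points, so the tie-breaker ends after exactly $n=K+h$ points with $K \le n \le 2(K-1)$;
\item the score reaches $(K-1,K-1)$ after $2K-2$ points, and the STT is then played.
\end{itemize}
These cases cover all terminations, by the earlier result that the STT ends with probability one. The two cases occupy disjoint ranges of $n$, since the second forces $n \ge 2K$. The Theorem of Total Probability then lets me add the two contributions, each multiplied by the corresponding indicator.

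For the first case I would note that $\{N_{ST}=n\}$, for $K\le n\le 2K-2$, is the disjoint union of two events: $A$ wins with score $(K,n-K)$, or $B$ wins with score $(n-K,K)$. The first has probability $\theta_{ST}(K,n-K;p_A,p_B,K)$, already computed in Proposition \ref{prop: set tie-breaker probs}. The second is the mirror event, in which $B$ has $K-1$ of the first $n-1$ points and wins the $n$th. Its probability is computed by the same binomial-convolution argument with the roles of wins and losses exchanged; I read $\theta_{ST}(n-K,K;\cdot)$ as that quantity. I would state this reading explicitly, since the paper only defined the $(K,h)$ version, and mention that the formula is obtained by replacing $(p_A,q_B)$ with $(q_A,p_B)$ in the appropriate factors.

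For the second case, reaching $(K-1,K-1)$ has probability $\theta_{ST}(K-1,K-1;p_A,p_B,K)$. Because the points are independent, the STT outcome depends only on points after the $(2K-2)$th. Hence $N_{ST}=2K-2+N_{STT}$ with $N_{STT}$ independent of the event of reaching the tie, and I would write
\[
\Pr\{N_{ST}=n\} = \theta_{ST}(K-1,K-1)\,\Pr\{N_{STT}=n-(2K-2)\}.
\]
The STT ends at the close of some odd/even pair of points, so $N_{STT}$ is even. This explains the restriction to even $n\ge 2K$ and gives the argument $2(n-(2K-2))/2$ as written.

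The main step is the law of $N_{STT}$. I would reuse the events $E_L(Z)$ from the proof that the STT ends: $\{N_{STT}=2L\}=E_L(Z)$. Since the pairs $T(Z_1),T(Z_2),\dots$ are independent and each pair ends the STT with probability $p_Aq_B+q_Ap_B$, $N_{STT}/2$ is geometric. This gives $(p_Ap_B+q_Aq_B)^{L-1}(p_Aq_B+q_Ap_B)$. The only subtlety, and the likely obstacle, is that the STT may start with $B$ serving: the paper's formula uses $I_A(2K-1)$, and $2K-2\equiv 0$ or $2 \pmod 4$. I would handle this with the permutation invariance $T(\Pi(Z))=T(Z)$ from the STT-ending proposition. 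Within each odd/even pair, one point is served by $A$ and one by $B$ regardless of order, so the per-pair ending probability is unchanged and the geometric law holds whichever player serves first in the STT.
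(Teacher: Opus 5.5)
Your proposal is correct and follows the paper's proof. Both decompose $\{N_{ST}=n\}$ into ending at a $(K,n-K)$ or $(n-K,K)$ score for $K\le n\le 2(K-1)$, or reaching $(K-1,K-1)$ and then factoring off $\Pr\{N_{STT}=n-(2K-2)\}$, where $N_{STT}/2$ is geometric with success probability $p_Aq_B+q_Ap_B$ because every pair of STT points contains one serve by each player. Your explicit handling of the $B$-wins term $\theta_{ST}(n-K,K;\cdot)$ and of the case where $B$ serves first in the STT (e.g.\ $K=10$) only fills in details the paper passes over quickly.
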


\begin{proof}
    Consider first the length of a STT, denoted by $N_{STT}$. When it reaches this stage, which happens if the score reaches $(K-1,K-1)$, each player will have a serve for every two successive points played (since $AB$ then $BA$, etc.). Consequently,
    $$\Pr\{N_{STT} = 2L\} =  (p_Ap_B + q_Aq_B)^{L-1} (p_Aq_B + q_Ap_B) \ \mbox{for}\ L=1,2,\ldots,$$
    that is, $N_{STT}/2$ is geometric with success probability $p_Aq_B + q_Ap_B$.

    For $n \le 2(K-1)$, the set tie-breaker could end with a score of $(K,n-K)$ with player $A$ winning, or a score of $(n-K,K)$ with player $B$ winning. The probabilities of these events are given in Proposition \ref{prop: set tie-breaker probs}.

    For $n \ge 2K$ and $n$ even, letting $L = (n-(2K-2))/2$, the probability that the set tie-breaker ends with $n$ points played is
    $$\Pr\{(K-1,K-1)\} \Pr\{N_{STT} = 2L\} = \theta_{ST}(K-1,K-1;p_A,p_B,K) \Pr\{N_{STT} = 2L\}.$$
\end{proof}

Figure \ref{fig: PMFs Num Points Set Tie-Breaker} presents the probability mass functions of $N_{ST}$ when $K = 7$ and for two sets of $(p_A,p_B)$: (i) $p_A = p_B = .5$, and (ii) $p_A = .9 = p_B = .9$. From this PMF of $N_{ST}$, the distributional summary measures $\mu_{ST}$, $\sigma_{ST}^2$, and $\sigma_{ST}$, could be obtained via the usual formulas. It is actually possible to obtain somewhat closed-form expressions for these quantities as functions of $(p_A,p_B,K)$ owing to the fact that $N_{STT}/2$ is a geometric random variable. For instance, we obtain
\begin{eqnarray*}
 \mu_{ST}(p_A,p_B,K)  & = & \sum_{n=K}^{2(K-1)} n \left[\theta_{ST}(K,n-K;p_A,p_B,K) + \theta_{ST}(n-K,K;p_A,p_B,K) \right] +  \\
    && \theta_{ST}(K-1,K-1;p_A,p_B,K) \left[ 2(K-1) + \frac{2}{p_Aq_B + q_Ap_B}\right].
\end{eqnarray*}
However, the expressions will still contain several terms and will not as informative compared to the plots of $\mu_{ST}$ and $\sigma_{ST}$ as functions of $(p_A,p_B,K)$. Figure \ref{fig: Mean and Std Num Points Set Tie-Breaker} shows the plot of the mean and standard deviation of $N_{ST}$ when $K=7$ as $p_A = p_B$ (left panels)  and $p_A = 1 - p_B$ (right panels) varies over $(0,1)$. More details of the behavior can be found in Figure \ref{fig: contour plot of mean of NST for K=7} which presents a contour plot of the mean of $N_{ST}$ over the unit-square. Interestingly, this contour plot is saddle-shaped, with a saddle-point of $(p_A, p_B) = (0.5,0.5)$, though this saddle-shapedness is already indicated by the plots in Figures \ref{fig: Mean and Std Num Points Set Tie-Breaker}. The theoretical results here indicate that when two players are outstanding servers, so that their $p_A$ and $p_B$ are very high, then the ST will require many points to complete. In practice, this lengthy STs manifested, for instance, with the now-retired American player, John Isner, whose very powerful and fast serves gave him a huge advantage when serving. He had a fifth set ST at Wimbledon in 2010 against Frenchman Nicholas Mahut which ended on a score of 70 games to 68 games (and spanned three days to play the whole match) \cite{wiki:IsnerMahut}, when the Wimbledon Grand Slam tournament was still using a tie-breaker system in the final set, wherein the player who obtains an advantage of two {\em games}, not {\em points}, wins the tie-breaker! Note that if a player has a high $p$ of winning a point when serving, then this gets further magnified in terms of the probability of winning a service game. From the formula of $\theta_G(p)$, if for example Isner has probability of 0.70 winning a service point, then the probability of his winning a service game is 0.90, so if his opponent has a probability of also 0.70 of winning his own service point, then both of them each has probability of 0.90 of winning their service games. As such, this could lead to very lengthy final set pre-2019 Wimbledon-style tie-breakers, such as the afore-mentioned Isner-Mahut fifth set tie-breaker! Later in section \ref{subsec: two-parameter systems}, we compare the tennis match system with a Best-of-$(2L+1)$-Games system with a tie-breaker system that requires the winner to win by 2 games, which will shed further light into the Isner-Mahut match.


\begin{figure}[h]
\begin{center}
\includegraphics[width=.3\textwidth,height=.3\textwidth]{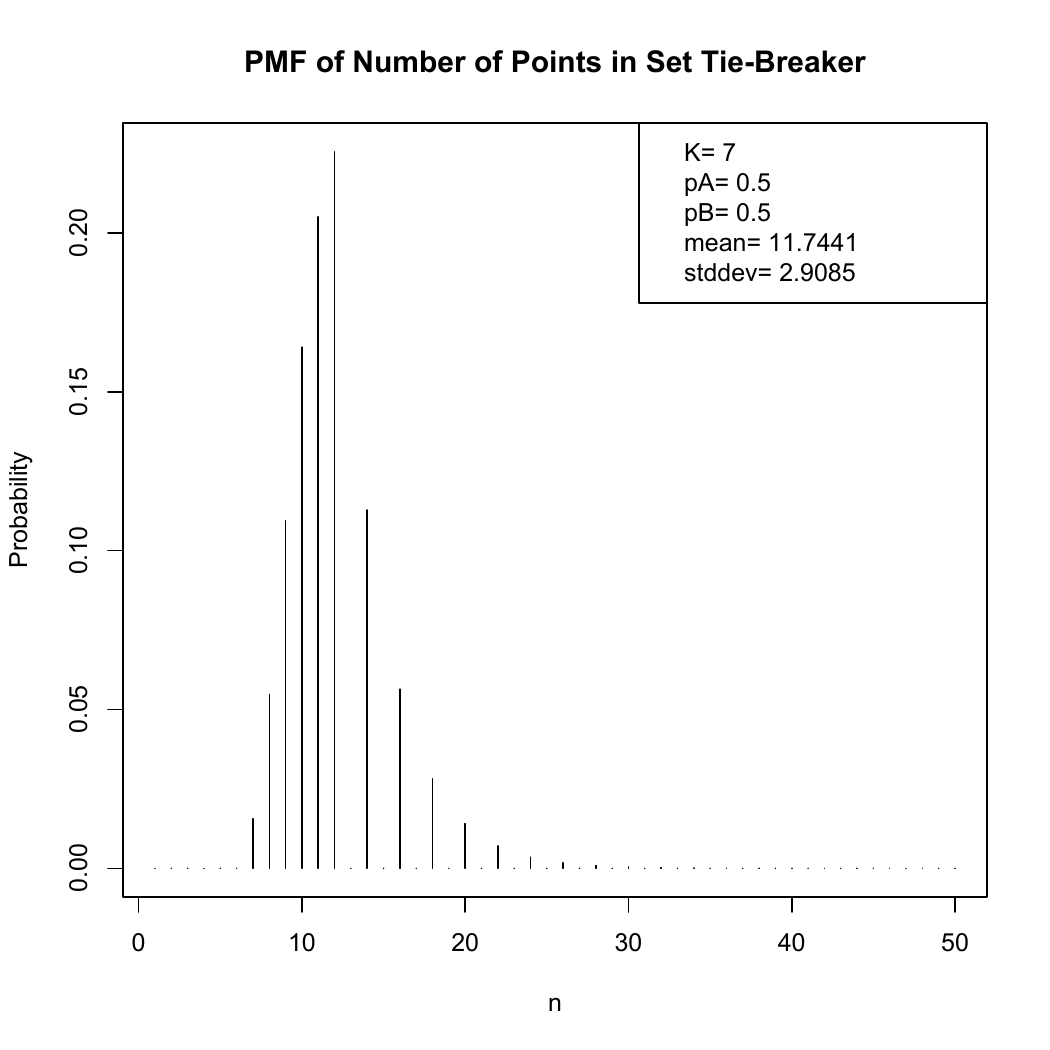} 
\includegraphics[width=.3\textwidth,height=.3\textwidth]{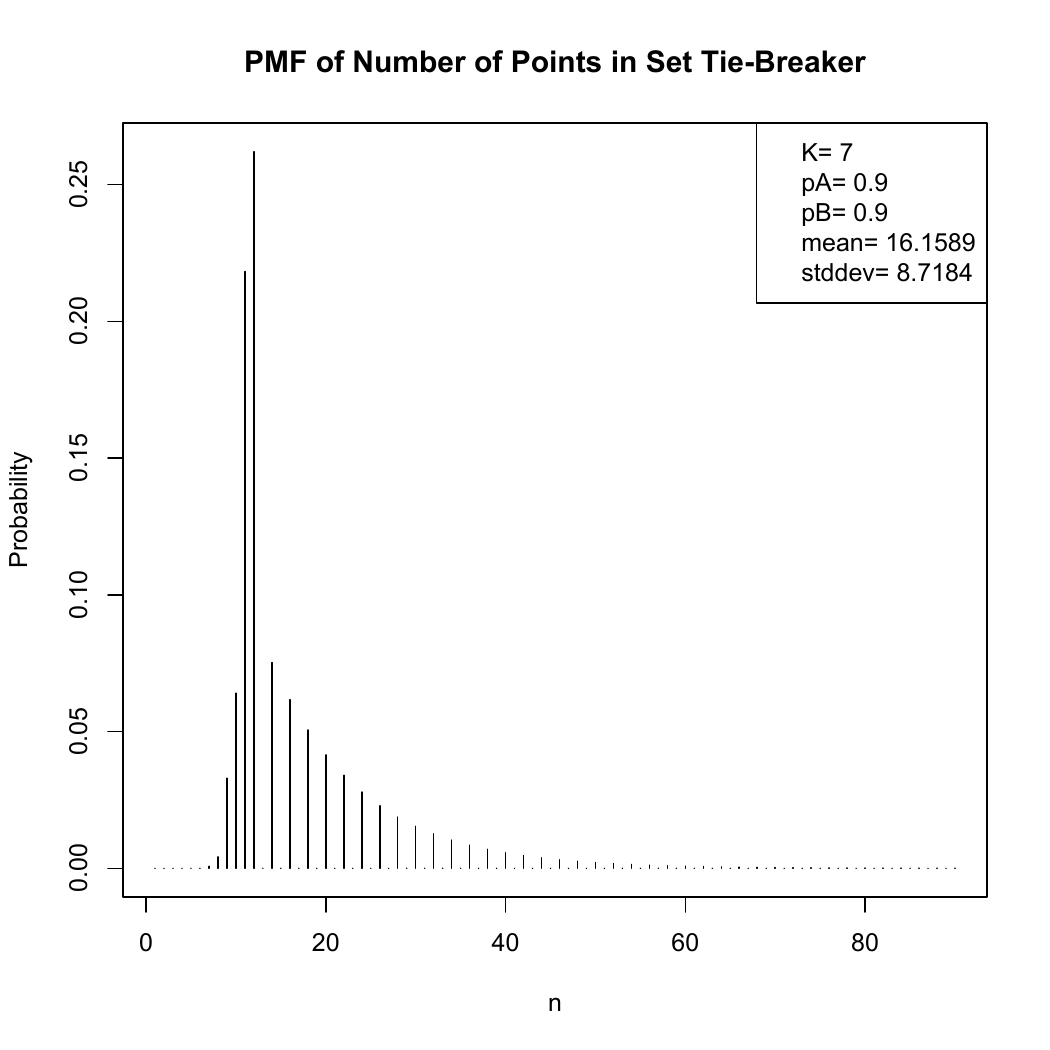}
\caption{Probability mass function (pmf) of $N_{ST}$ for $K=7$ and when $p_A=p_B=.5$ (left) and $p_A=p_B=.9$ (right).}
\label{fig: PMFs Num Points Set Tie-Breaker}
\end{center}
\end{figure}

\begin{figure}[h]
\begin{center}
\includegraphics[width=.3\textwidth,height=.3\textwidth]{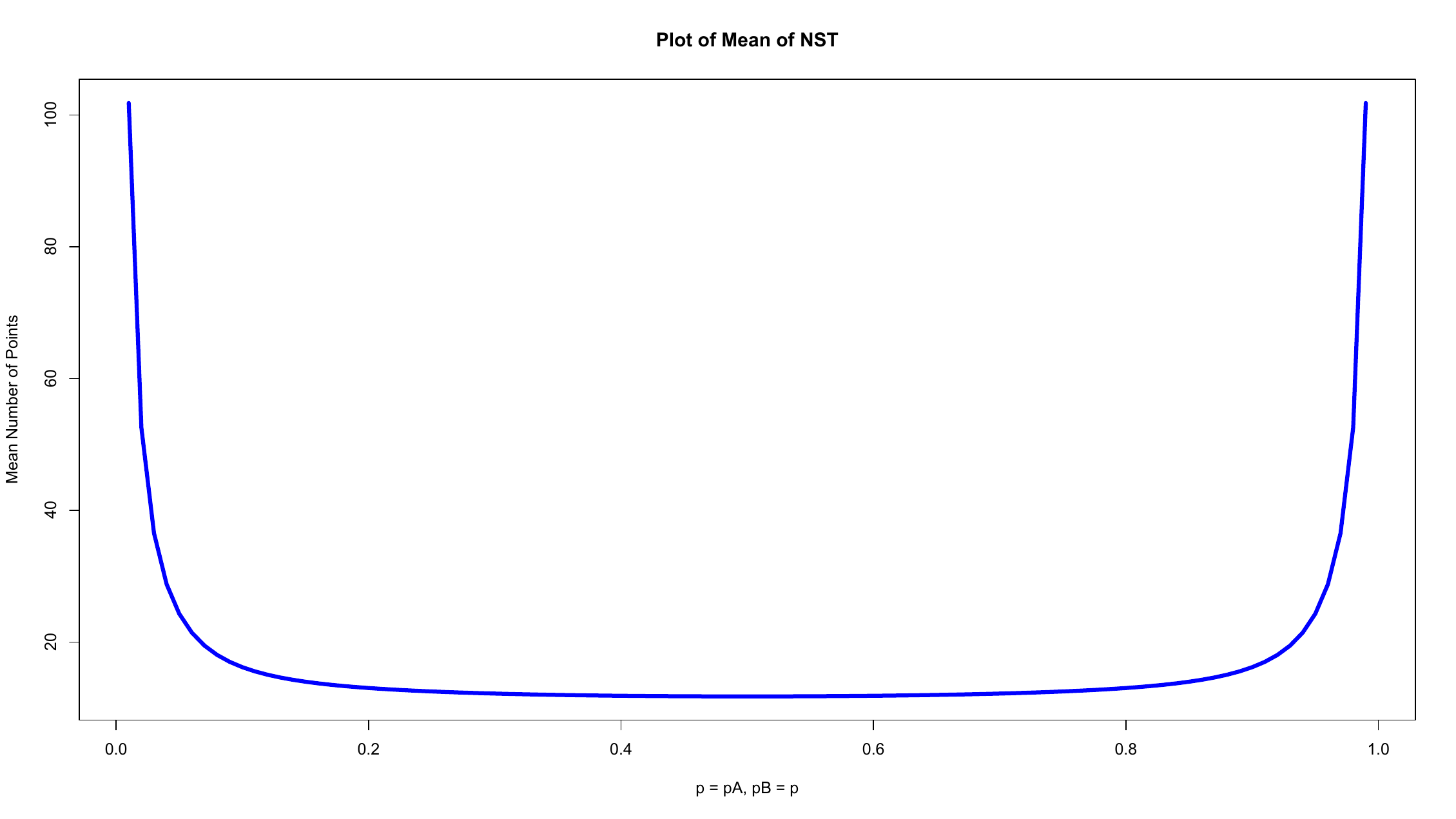} 
\includegraphics[width=.3\textwidth,height=.3\textwidth]{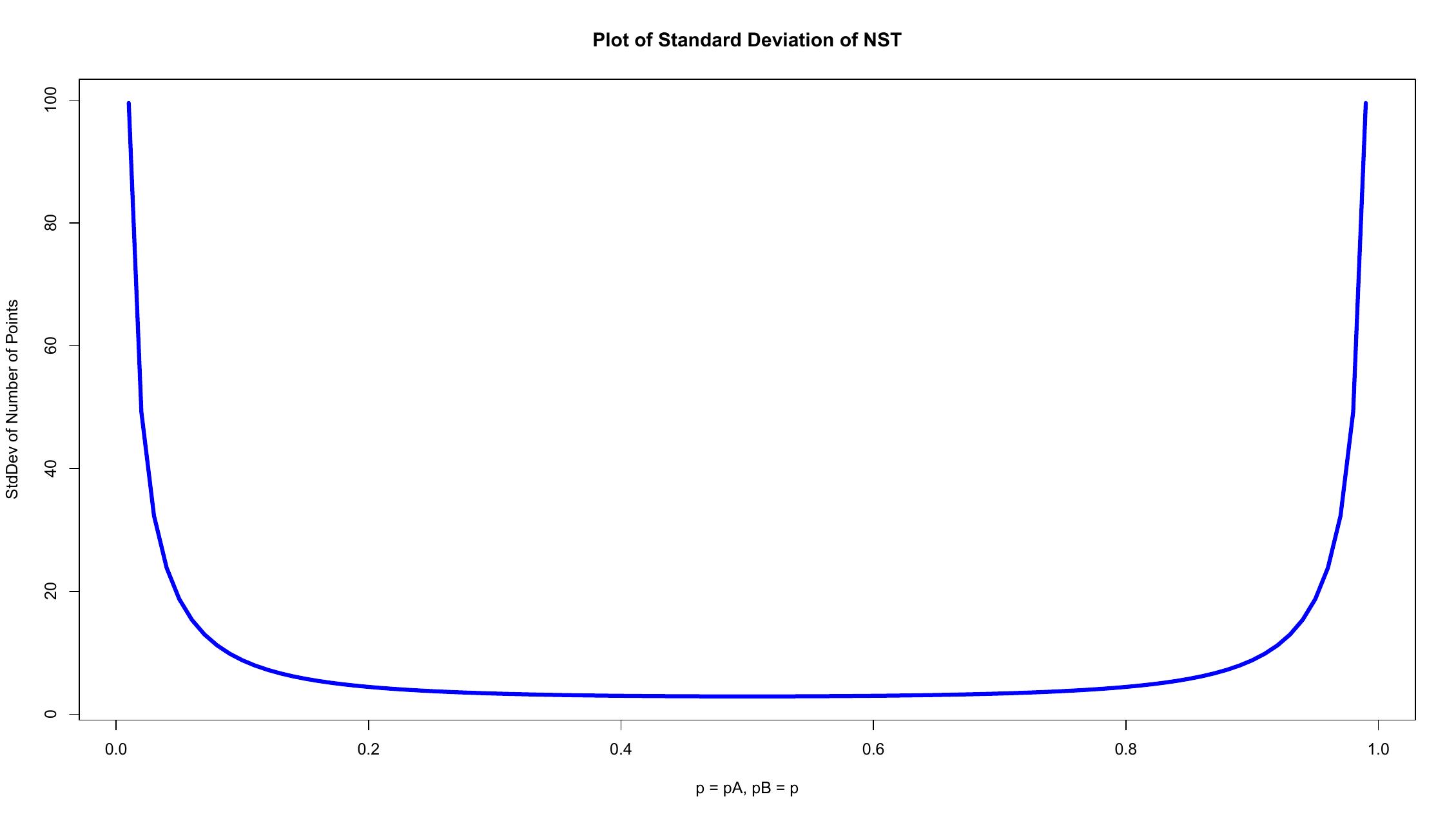} \\
\includegraphics[width=.3\textwidth,height=.3\textwidth]{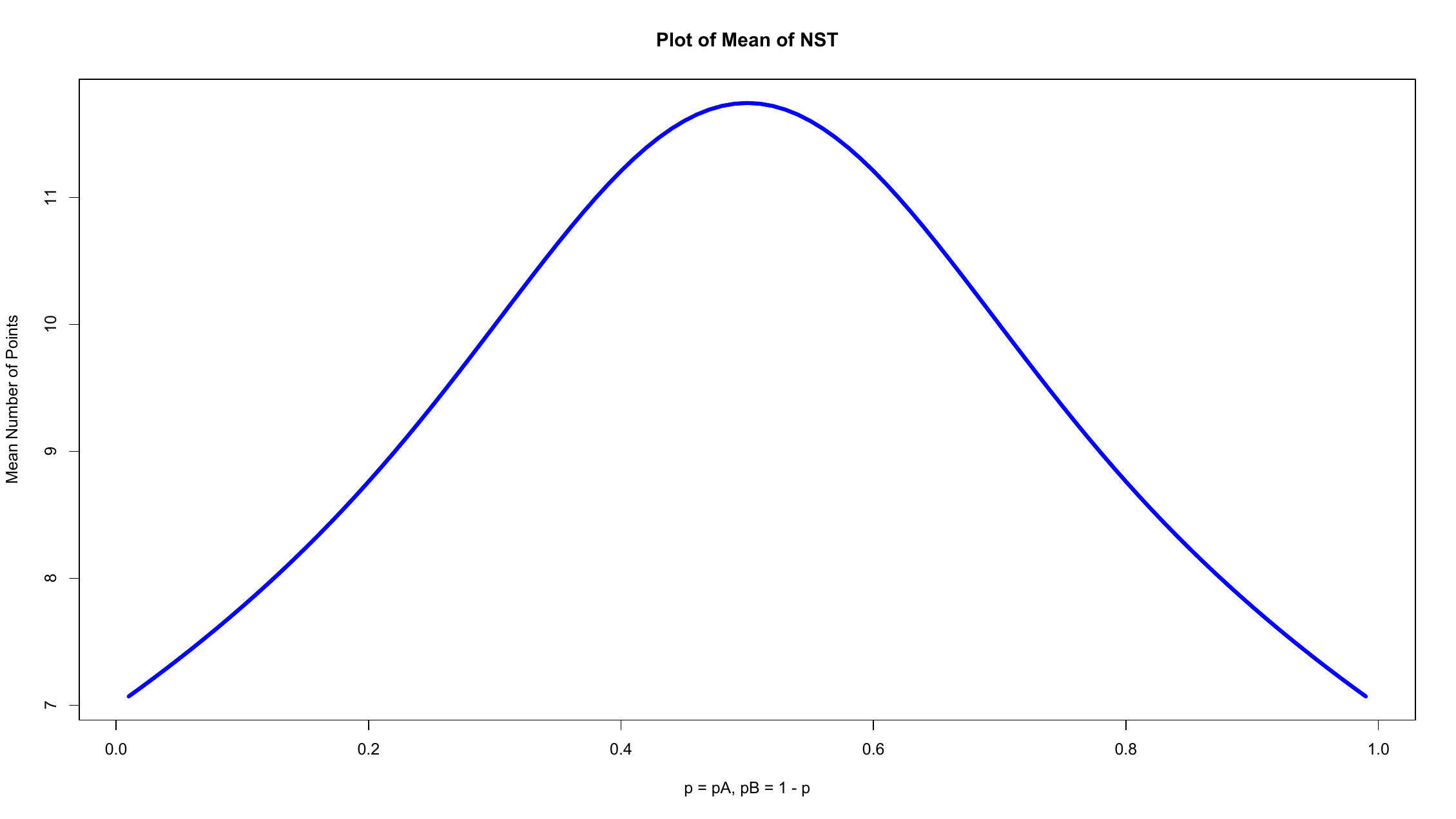} 
\includegraphics[width=.3\textwidth,height=.3\textwidth]{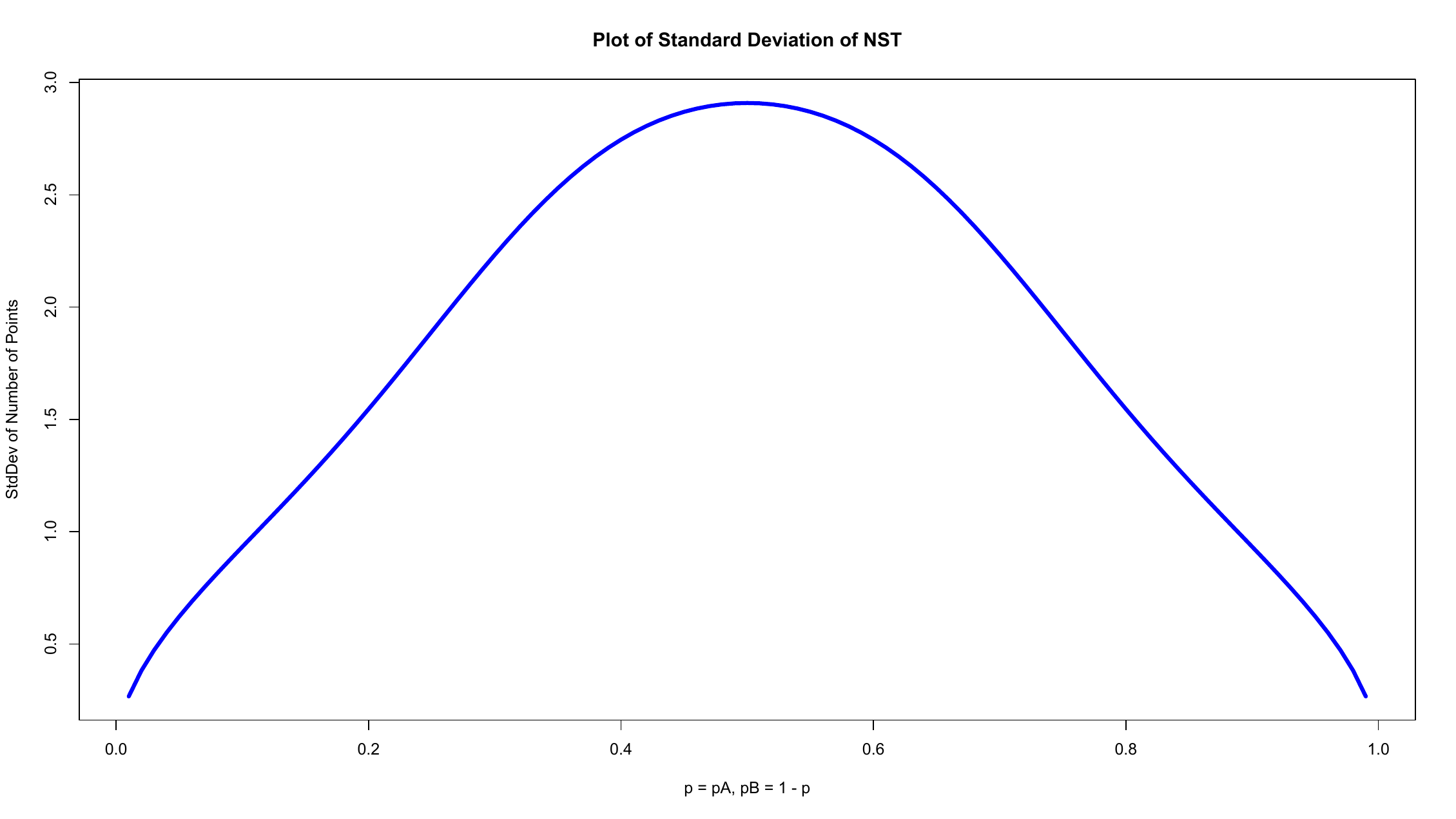} 
\caption{Mean and Standard Deviation of $N_{ST}$, the number of points played to end a set tie-breaker, for $K=7$, as $p_A = p_B$ (top panels) and $p_A = 1 - p_B$ (bottom panels) ranges over $(0,1)$.}
\label{fig: Mean and Std Num Points Set Tie-Breaker}
\end{center}
\end{figure}

\begin{figure}[h]
\begin{center}
\includegraphics[width=.4\textwidth,height=.4\textwidth]{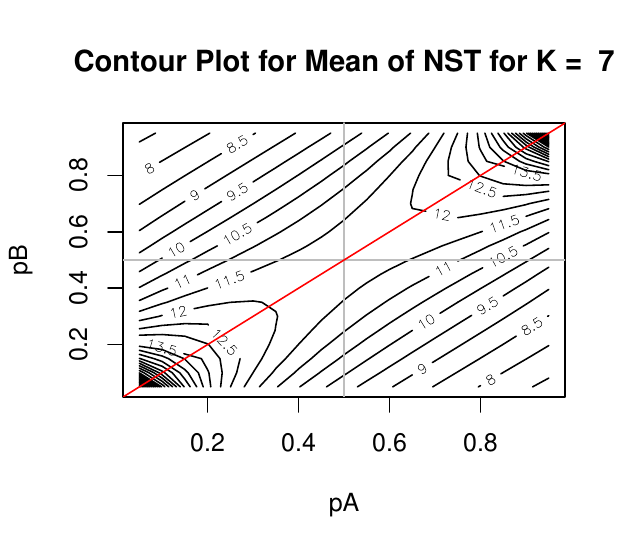} 
\caption{Contour Plot of the Mean of $N_{ST}$, the number of points played to end a set tie-breaker, for $K=7$.}
\label{fig: contour plot of mean of NST for K=7}
\end{center}
\end{figure}

\subsection{Set Probabilities}

We are now in a position to compute the probability that player $A$ will win the set with a $K$-points set tie-breaker, if he/she is the first to serve. We denote this probability by $\theta_S(p_A,p_B,K)$. Recall that in a set, the two players alternate in serving games, and the first player to win 6 games, with an advantage of at least 2 games, wins the set. If each wins 5 games, then the first to win 7 games, provided that the other is stuck at 5 won games, wins the set, otherwise the $K$-points ST ensues.

We introduce the following notation: $t_A(g)$ is the number of games that player $A$ serves, and $t_B(g) = g - t_A(g)$ is the number of games player $B$ serves, in the first $g$ games of the set. $J_A(g)$ will indicate whether player $A$ is serving on the $g$th game ($J_A(g) = 1$). Then, it is immediately clear that
$$t_A(g) = \lfloor \frac{g+1}{2} \rfloor \quad \mbox{and} \quad J_A(g) = g \bmod 2,$$
where $\lfloor r \rfloor$ is the largest integer less than or equal to $r$, the {\em floor} of $r$.

\begin{theorem}
For a set with a $K$-points set tie-breaker,
\begin{eqnarray*}
    \theta_S(p_A,p_B,K) & = & \sum_{h=0}^4  \theta_S(6,h;p_A,p_B) + \theta_S(7,5;p_A,p_B) + \theta_S(6,6;p_A,p_B) \theta_{ST}(p_A,p_B,K),
\end{eqnarray*}
where
\begin{eqnarray*}
\lefteqn{ \theta_S(6,h;p_A,p_B)   \equiv    \Pr\{\mbox{$(6,h)$ score}\}  \times }  \\ & = &[\theta_G(p_A)]^{J_A(6+h)} [1 - \theta_G(p_B)]^{1-J_A(6+h)} \\ 
    && \sum_{j=0}^5 \left\{ {{t_A(6+h-1)} \choose j} {{t_B(6+h-1} \choose {5-j}}  [\theta_G(p_A)]^j [1 - \theta_G(p_A)]^{t_A(6+h-1) - j} \times \right. \\ && \left. [1 - \theta_G(p_B)]^{5-j} [\theta_G(p_B)]^{t_B(6+h-1) - (5-j)} \right\};
\end{eqnarray*}
\begin{eqnarray*}
\theta_S(5,5;p_A,p_B)  & \equiv & \Pr\{\mbox{$(5,5)$ score} \} \\ & = & \sum_{j=0}^5
    {5 \choose j}^2 [\theta_G(p_A) \theta_G(p_B)]^j 
    [(1-\theta_G(p_A))(1-\theta_G(p_B))]^{5-j}; \\
%
\theta_S(7,5;p_A,p_B,K) & \equiv &  \Pr\{\mbox{$(7,5)$ score} \} = 
 \theta_S(5,5;p_A,p_B) \theta_G(p_A) [1-\theta_G(p_B)]; \\
%
 \theta_S(6,6;p_A,p_B) & \equiv & \Pr\{\mbox{$(6,6)$ score} \} \\ & = &  \theta_S(5,5;p_A,p_B) \left[\theta_G(p_A) \theta_G(p_B) + (1-\theta_G(p_A))(1-\theta_G(p_B))\right].
\end{eqnarray*}
Alternatively,
\begin{eqnarray*}
\lefteqn{ \theta_S(p_A,p_B,K)   =  } \\ &&\Pr\{B(5,\theta_G(p_A)) + B(5,1-\theta_G(p_B)) \ge 6\} + \\ &&  \Pr\{B(5,\theta_G(p_A)) + B(5,1-\theta_G(p_B)) = 5\} \left[
    \theta_G(p_A)(1-\theta_G(p_B)) + \right. \\ && \left. \left\{\theta_G(p_A) \theta_G(p_B) + (1-
    \theta_G(p_A) (1 - \theta_G(p_B))\right\} \theta_{ST}(p_A,p_B,K) \right]. 
\end{eqnarray*}
In addition, $\theta_S(p_A,p_B,K) = \theta_S(q_B,q_A,K).$
\end{theorem}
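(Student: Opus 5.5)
We mirror the proof of Proposition \ref{prop: set tie-breaker probs}, now at the level of games rather than points. Games are independent. When $A$ serves, $A$ wins the game with probability $\theta_G(p_A)$ by Theorem \ref{thm: prob wins game}. When $B$ serves, $A$ wins with probability $1-\theta_G(p_B)$. Accordingly, let $X_1,X_2,\ldots$ be IID $Ber(\theta_G(p_A))$, indicating that $A$ wins his/her $i$th service game. Let $Y_1,Y_2,\ldots$ be IID $Ber(\theta_G(p_B))$, indicating that $B$ wins his/her $j$th service game. The games alternate, so in the first $g$ games $A$ serves $t_A(g)$ of them, and the serve of the $g$th game is indicated by $J_A(g)$.

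\textbf{Step 1: partition the winning event.} We write
\[
\{\mbox{$A$ wins set}\} = \bigcup_{h=0}^4 F_h \cup F_{75} \cup \big(T_{66}\cap\{\mbox{$A$ wins ST}\}\big).
\]
Here $F_h$ is the event that $A$ wins with a $(6,h)$ score, $F_{75}$ that $A$ wins $7$--$5$, and $T_{66}$ that the score reaches $6$--$6$.

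For $F_h$, $A$ must win exactly $5$ of the first $5+h$ games and then win game $6+h$. Conditioning on $j$, the number of those $5$ games that came from $A$'s own service games, gives the stated binomial-convolution sum. It is multiplied by the factor $[\theta_G(p_A)]^{J_A(6+h)}[1-\theta_G(p_B)]^{1-J_A(6+h)}$ for the final game.

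A $5$--$5$ score means that after $10$ games, with $5$ serves each, $A$ has won $j$ of his/her own service games and $5-j$ of $B$'s. This gives $\binom5j^2[\theta_G(p_A)\theta_G(p_B)]^j[(1-\theta_G(p_A))(1-\theta_G(p_B))]^{5-j}$. Game $11$ is served by $A$ and game $12$ by $B$ (since $J_A(11)=1$). Hence $F_{75}$ contributes $\theta_G(p_A)(1-\theta_G(p_B))$ times the $5$--$5$ probability. The event $T_{66}$ carries the factor $\theta_G(p_A)\theta_G(p_B)+(1-\theta_G(p_A))(1-\theta_G(p_B))$.

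\textbf{Step 2: the set tie-breaker term.} We must check who serves first in the ST. The receiver in game $12$ is $A$, so $A$ serves first. By independence of the points in the ST from the earlier games, we multiply by $\theta_{ST}(p_A,p_B,K)$ from Proposition \ref{prop: set tie-breaker probs}. The certainty that the ST ends, established earlier, ensures these events exhaust the possibilities.

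\textbf{Step 3: the alternative expression.} We use the same ``extend every path to full length'' device as in that proposition. Extend all game sequences to $10$ games, in which each player serves $5$. $A$ reaches $6$ games first, with the opponent at $\le4$, exactly when
\[
\sum_{i=1}^5 X_i+\sum_{j=1}^5(1-Y_j)\ge 6.
\]
This event has probability $\Pr\{B(5,\theta_G(p_A))+B(5,1-\theta_G(p_B))\ge 6\}$. The tie at $5$--$5$ is the event that this sum equals $5$. The continuation probability after $5$--$5$ is then read off from Step 1.

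The main subtle point is justifying that $\sum_{h=0}^4\Pr\{F_h\}$ equals the tail probability. If $A$ has won $\ge6$ of the extended $10$ games, then $B$ has won at most $4$, so $A$ reached $6$ first with margin at least $2$. Conversely, any actual $(6,h)$ win extends to such a path. The correspondence is measure-preserving because the extra games are independent and their outcomes are summed out.

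\textbf{Step 4: the symmetry.} By Theorem \ref{thm: prob wins game}, $\theta_G(1-p)=1-\theta_G(p)$. Hence replacing $(p_A,p_B)$ by $(q_B,q_A)$ maps $\theta_G(p_A)\mapsto 1-\theta_G(p_B)$ and $1-\theta_G(p_B)\mapsto\theta_G(p_A)$. This swaps the two binomials in the alternative expression, which leaves the distribution of their sum unchanged. It also leaves the products $\theta_G(p_A)(1-\theta_G(p_B))$ and $\theta_G(p_A)\theta_G(p_B)+(1-\theta_G(p_A))(1-\theta_G(p_B))$ invariant. Finally, $\theta_{ST}(p_A,p_B,K)=\theta_{ST}(q_B,q_A,K)$ by Proposition \ref{prop: set tie-breaker probs}. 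Together these give $\theta_S(p_A,p_B,K)=\theta_S(q_B,q_A,K)$.
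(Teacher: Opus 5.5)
Your proof is correct and follows essentially the same route as the paper. Both use a total-probability decomposition over the final scores $(6,h)$, $(7,5)$ and $(6,6)$ followed by the ST, the ``extend every path to 10 games'' device for the binomial alternative form, and the symmetry argument via that alternative form together with $\theta_G(1-p)=1-\theta_G(p)$ and $\theta_{ST}(p_A,p_B,K)=\theta_{ST}(q_B,q_A,K)$, which is the paper's ``quicker'' route. You also explicitly check that $A$, as receiver in game 12, serves first in the ST, and you justify the path-extension correspondence; the paper leaves both details implicit.
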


\begin{proof}
   The first equation is just by virtue of the Theorem of Total Probability and how player $A$ could win the set. The expression for $\Pr\{\mbox{$(6,h)$ score}\}$ is obtained by considering the ways in which player $A$ could win 6 games, with the last game played won by $A$, his/her 6th won game. Prior to the last game, player $A$ must win 5 games, but this could be achieved by winning some of the games he/she serves, and winning some of the games served by player $B$. The combinatorial terms just count the number of ways of achieving this. To get in a score of $(7,5)$, the score must reach $(5,5)$, then $A$ must win two games, one with his/her serve, the other with $B$'s serve. On the other hand, to get to the ST, the score must reach $(6,6)$ from $(5,5)$, which would happen with a game won by each of them after $(5,5)$, but this could happen in two ways: first $A$ wins, then $B$; and second, $A$ loses, then wins, again taking into account that each of them will have a game served after $(5,5)$. When the score reaches $(6,6)$, then the ST ensues, and player $A$ will win this ST with probability $\theta_{ST}(p_A,p_B,K)$.

   The alternative form of $\theta_S(p_A,p_B,K)$ is obtained similarly from the arguments made in the proofs of Proposition \ref{prop: set tie-breaker probs} or Corollary \ref{coro: under pA=pB for theta_ST}.
   The last identity in the statement of the Theorem follows by noting that 
   \begin{eqnarray*}
  \theta_S(6,h;p_A,p_B) & = & \theta_S(h,6;q_B,q_A); \\ \theta_S(7,5;p_A,p_B) & = & \theta_S(5,7;q_B,q_A); \\ \theta_S(6,6;p_A,p_B) & = & \theta_S(6,6;q_B,q_A),     
   \end{eqnarray*}
   and since we already established that $\theta_{ST}(p_A,p_B,K) = \theta_{ST}(q_B,q_A,K)$, though a quicker and more elegant proof is obtained using the alternative form for $\theta_S(p_A,p_B,K)$ and using the result that
   $1-\theta_G(q_A) =  1 - [1-\theta_G(p_A)] = \theta_G(p_A)$.
\end{proof}

\begin{figure}[h]
    \centering
\includegraphics[width=.4\textwidth,height=.4\textwidth]{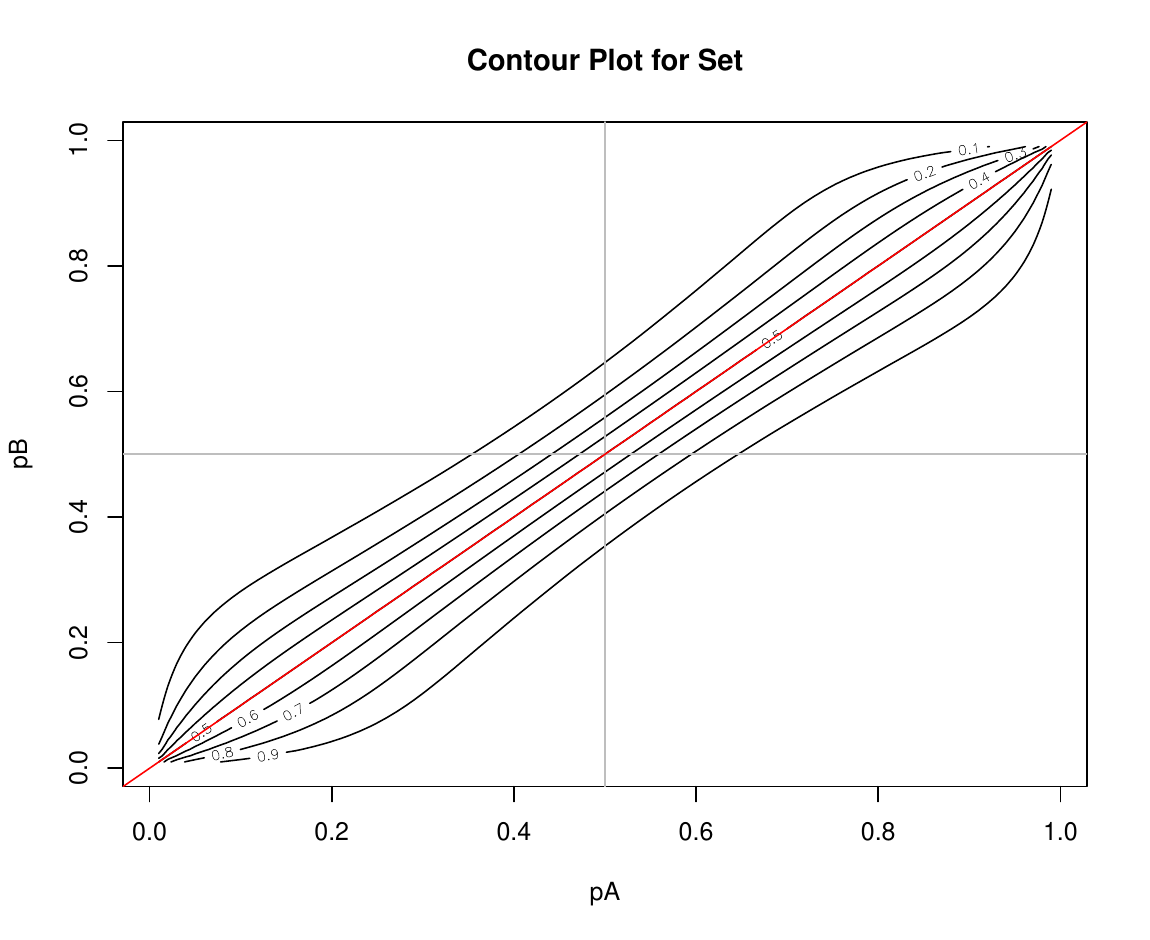}
    \caption{Contour plot of the probability of first server winning the set when $K = 7$. $pA$ is the probability of first server winning point on his/her serve; while $pB$ is the probability of the first receiver winning point on his/her serve.}
    \label{fig: contour set}
\end{figure}

Figure \ref{fig: contour set} presents a contour plot of $\theta_{S}$ as $p_A$ and $p_B$ range over $0.01$ to $0.99$ and when $K = 7$. The probabilities were computed using an {\tt R} program we developed implementing the formula in the preceding theorem.
We again observe from this contour plot that when $p_A = p_B = p \in (0,1)$, we have that $\theta_S(p,p,K) = 1/2$, demonstrating that there is {\em no advantage to who serves first in the set}, provided that the players are of equal abilities, but when not perfect on their serves, that is, $0 < p < 1$. This fairness question was also addressed in Newton and Keller's paper \cite{Newton_Keller_2005} in their subsection 3.3. Compared to the contour plot for the ST in Figure \ref{fig: contour set tie breaker}, observe that for the set, the contour plot is more compact or narrower. This demonstrates that the tennis set decision system is able to more efficiently discriminate between the two players compared to the ST, as one should expect.

\subsection{Number of Points in a Set}

Let $N_S$ denote the number of points played in a set. We seek the mean $\mu_S$ and the standard deviation $\sigma_S$ of $N_S$ as a function of $(p_A,p_B,K)$, with player $A$ serving in the first game of the set. To do so, we need to decompose $N_S$ into the different cases in which the set could end. For $a,b \in \{0,1,\ldots,7\}$, let
$$N_S(a,b) = \mbox{number of points when $A\ (B)$ wins $a\ (b)$ games, respectively.}$$
Then, with $G_A\ (G_B)$ denoting the number of games won by player $A\ (B)$, respectively, we have the decomposition of $N_S$ given by
\begin{eqnarray*}
    N_S & = & \sum_{h=0}^4 \left[I\{G_A = 6,G_B = h\} N_S(6,h) + I\{G_A = h,G_B = 6\} N_S(h,6) \right] + \\
   && I\{G_A=7,G_B=5\} N_S(7,5) + I\{G_A=5,G_B=7\} N_S(5,7) + \\ && I\{G_A=6,G_B=6\} \left[N_S(6,6) + N_{ST}\right].
\end{eqnarray*}
Next, let
$$\mu_S(a,b; p_A,p_B) = E[N_S(a,b)|p_A,p_B] \quad \mbox{and} \quad
\sigma_S^2(a,b; p_A,p_B) = Var[N_S(a,b)|p_A,p_B].$$
We could also decompose $N_S(a,b)$ via the games in which $A$ serves, and those where $B$ serves. For a total of $a+b$ games, the number of games served by $A$ is $t_A(a+b)$, while those by $B$ is $t_B(a+b) = (a+b) - t_A(a+b)$. Then,
$$N_S(a,b) = \sum_{i=1}^{t_A(a+b)} N_{Gi}^A + \sum_{j=1}^{t_B(a+b)} N_{Gj}^B,$$
where $N_{Gi}^A$ is the number of points played in the $i$th game served by $A$, and $N_{Gj}^B$ is the number of points played in the $j$th game served by $B$. Note that the $N_{Gi}^A$s are independent and identically distributed (IID), and similarly, $N_{Gj}^B$s are also IID. Consequently,
$$\mu_S(a,b;p_A,p_B) = t_A(a+b) \mu_G(p_A) + t_B(a+b) \mu_G(p_B);$$
$$\sigma_S^2(a,b;p_A,p_B) =  t_A(a+b) \sigma_G^2(p_A) + t_B(a+b) \sigma_G^2(p_B).$$

\begin{theorem}
    For a set where player $A$ serves first, the mean, variance, and standard deviation of the number of points played are, respectively,
    \begin{eqnarray*}
       \lefteqn{ \mu_S(p_A,p_B,K)  = } \\ && \sum_{h=0}^4
        \left[\theta_S(6,h;p_A,p_B) \mu_S(6,h;p_A,p_B) + \theta_S(h,6;p_A,p_B) \mu_S(h,6;p_A,p_B)\right] + \\
        &&  \theta_S(7,5;p_A,p_B) \mu_S(7,5;p_A,p_B) + \theta_S(5,7;p_A,p_B) \mu_S(5,7;p_A,p_B) + \\
        && \theta_S(6,6;p_A,p_B)\left[\mu_S(6,6;p_A,p_B) + \mu_{ST}(p_A,p_B,K)\right];
    \end{eqnarray*}
    \begin{eqnarray*}
        \lefteqn{ \sigma_S^2(p_A,p_B,K)  = } \\ &&
        \left[ \left\{ \sum_{h=0}^4
        \left[\theta_S(6,h;p_A,p_B) \mu_S^2(6,h;p_A,p_B) + \theta_S(h,6;p_A,p_B) \mu_S^2(h,6;p_A,p_B)\right] + \right.\right. \\
        &&  \theta_S(7,5;p_A,p_B) \mu_S^2(7,5;p_A,p_B) + \theta_S(5,7;p_A,p_B) \mu_S^2(5,7;p_A,p_B) + \\
        && \left.\left. \theta_S(6,6;p_A,p_B)\left[\mu_S^2(6,6;p_A,p_B) + \mu_{ST}^2(p_A,p_B,K)\right] \right\} - \mu_S^2(p_A,p_B,K)\right] + \\ 
        && \left\{ \sum_{h=0}^4
        \left[ \theta_S(6,h;p_A,p_B) \sigma_S^2(6,h;p_A,p_B) + \theta_S(h,6;p_A,p_B) \sigma_S^2(h,6;p_A,p_B)\right] + \right. \\
        &&  \theta_S(7,5;p_A,p_B) \sigma_S^2(7,5;p_A,p_B) + \theta_S(5,7;p_A,p_B) \sigma_S^2(5,7;p_A,p_B) + \\
        && \left. \theta_S(6,6;p_A,p_B)\left[\sigma_S^2(6,6;p_A,p_B) + \sigma_{ST}^2(p_A,p_B,K)\right] \right\}; 
    \end{eqnarray*}
    and $\sigma_S(p_A,p_B,K) = +\sqrt{\sigma_S^2}(p_A,p_B,K)$.
    Furthermore, we have
    %
    %
    %
    %
    %
    %
    $$\mu_S(p_A,p_B,K) = \mu_S(q_B,q_A,K) \quad \mbox{and} \quad \sigma_S^2(p_A,p_B,K) = \sigma_S^2(q_B,q_A,K).$$
\end{theorem}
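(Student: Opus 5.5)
The plan is to follow the pattern of the proof of Theorem \ref{theo: mean and variance for game number of points}: apply the Iterated Rules for Mean and Variance to the decomposition of $N_S$ displayed just before the statement. The conditioning variable is the terminal set score $C=(G_A,G_B)$. It takes the values $(6,h)$ and $(h,6)$ for $h=0,\ldots,4$, together with $(7,5)$, $(5,7)$ and $(6,6)$, with probabilities $\theta_S(6,h;p_A,p_B)$, $\theta_S(h,6;p_A,p_B)$, $\theta_S(7,5;p_A,p_B)$, $\theta_S(5,7;p_A,p_B)$ and $\theta_S(6,6;p_A,p_B)$ from the preceding theorem. These eleven events partition the sample space because the set ends with probability one; this uses the game and STT termination results, Proposition \ref{prop: game tie-breaker ends} and the STT-ends proposition.

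\textbf{Step 1 (conditional moments).} Given $C=(a,b)$ with $(a,b)\neq(6,6)$, $N_S=N_S(a,b)$ is a sum of $t_A(a+b)$ game lengths with server $A$ and $t_B(a+b)$ with server $B$. Conditionally these are independent with the laws of $N_G$ under $p_A$ and $p_B$. The point needing care is that conditioning on the \emph{games won} pattern does not alter the game-length laws. I would handle this by conditioning on the full vector of game lengths and game winners, exactly as $SC$ and $N_{GT}$ were treated in Theorem \ref{theo: mean and variance for game number of points}, and then use the sum formulas $\mu_S(a,b)$ and $\sigma_S^2(a,b)$ stated before the theorem. Given $C=(6,6)$, $N_S=N_S(6,6)+N_{ST}$. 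Here $N_{ST}$ depends only on the tie-breaker points, which are independent of the first twelve games, so the conditional mean is $\mu_S(6,6)+\mu_{ST}$ and the conditional variance is $\sigma_S^2(6,6)+\sigma_{ST}^2$.

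\textbf{Step 2 (assembly).} Averaging the conditional means with the weights $\theta_S(\cdot,\cdot)$ gives $\mu_S(p_A,p_B,K)$ exactly as displayed. For the variance I write $\sigma_S^2=\mathrm{Var}[E(N_S|C)]+E[\mathrm{Var}(N_S|C)]$. The second bracket of the statement is $E[\mathrm{Var}(N_S|C)]$ term by term. The first bracket is $E[(E(N_S|C))^2]-\mu_S^2$. In every non-tie-break slot this produces $\theta_S(a,b)\mu_S^2(a,b)$, as displayed.

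\textbf{Main obstacle: the tie-break slot.} Expanding the squared conditional mean gives $\theta_S(6,6)\left[\mu_S^2(6,6)+\mu_{ST}^2\right]$, which is the displayed term, plus the cross term $2\theta_S(6,6)\mu_S(6,6)\mu_{ST}$. My plan is to carry this term explicitly and check whether it is absorbed anywhere. As far as I can see it is not, since $\mu_S(6,6)\mu_{ST}>0$ whenever $\theta_S(6,6)>0$. I therefore expect the stated formula to be exact only when the set cannot reach $6$--$6$. Otherwise it needs the cross term appended, and I would flag this as the one place where the displayed identity must be checked against a direct PMF computation at some $(p_A,p_B)$.

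\textbf{Step 3 (symmetry).} Under $(p_A,p_B)\mapsto(q_B,q_A)$, the identities $\mu_G(p)=\mu_G(q)$ and $\sigma_G^2(p)=\sigma_G^2(q)$ from Theorem \ref{theo: mean and variance for game number of points} give $\mu_G(q_B)=\mu_G(p_B)$ and $\mu_G(q_A)=\mu_G(p_A)$, and likewise for $\sigma_G^2$. I must check that the server roles line up. Only the pair $(\mu_G(p_A),\mu_G(p_B))$ enters, and $t_A(a+b)$ and $t_B(a+b)$ differ by at most one, with the odd-total scores $(6,h)$ and $(h,6)$ at odd $h$ and the symmetric score $(6,6)$ to be examined. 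The mapping exchanges $\theta_S(6,h)$ with $\theta_S(h,6)$, as noted in the previous proof, and exchanges $(7,5)$ with $(5,7)$. Since $\mu_S(6,h)=\mu_S(h,6)$, $\mu_S(7,5)=\mu_S(5,7)$, and the same holds for the variances, both sums are invariant. It remains to show $\mu_{ST}$ and $\sigma_{ST}^2$ are invariant. This follows from the PMF theorem for $N_{ST}$, because $p_Aq_B+q_Ap_B$ is invariant and the $\theta_{ST}$ terms swap winner labels under the same map, by the argument of Proposition \ref{prop: set tie-breaker probs}.
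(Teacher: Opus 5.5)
You are right that the displayed variance omits the cross term $2\theta_S(6,6;p_A,p_B)\,\mu_S(6,6;p_A,p_B)\,\mu_{ST}(p_A,p_B,K)$. The paper's proof is the same one-line appeal to the iterated rules conditioning on $(G_A,G_B)$, and it never addresses this term.

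\textbf{Step 1.} Your Step 1 rests on a false claim: conditioning on the final score \emph{does} change the game-length laws, because within a game the length and the winner are dependent. Given $N_G=4$, $N_G=5$, or $N_G\ge 6$, the server has won with probability $p^4/(p^4+q^4)$, $p^3/(p^3+q^3)$, or $p^2/(p^2+q^2)$, respectively. These differ unless $p\in\{0,\tfrac12,1\}$. At $p=0.6$ one gets $E[N_G\mid\mbox{hold}]\approx 6.37$ and $E[N_G\mid\mbox{break}]\approx 6.81$, against $\mu_G(0.6)\approx 6.48$. Since $(G_A,G_B)=(6,0)$ forces three holds by $A$ and three breaks of $B$, $E[N_S\mid (G_A,G_B)=(6,0)]\ne\mu_S(6,0;p_A,p_B)$.

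The analogy with $SC$ and $N_{GT}$ does not carry over. There, $N_{GT}$ is built from points played after the conditioning event; here, the score and the game lengths come from the same points.

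The mean formula is nevertheless true, but it needs a stopping-time argument instead of your conditional means. Let $\tau$ be the number of games and $X_g$ the length of game $g$. The event $\{\tau\ge g\}$ is determined by games $1,\ldots,g-1$, hence is independent of $X_g$. So $E[N_S]=\sum_g E[X_g]\Pr\{\tau\ge g\}+\theta_S(6,6)\mu_{ST}$, which regroups exactly into the displayed sum.

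The same computation for $E[N_S^2]$ leaves, beyond your cross term, the extra terms $2\sum_{g<h}E[X_h]\,\mathrm{Cov}(X_g,I\{\tau\ge h\})+2\mu_{ST}\sum_{g\le 12}\mathrm{Cov}(X_g,I\{(G_A,G_B)=(6,6)\})$. These vanish when $p_A,p_B\in\{0,\tfrac12,1\}$, but not in general. For instance, at $p_B=\tfrac12$ with $p_A$ slightly below $1$ they are nonzero and of order $(1-p_A)^4$. So even your corrected variance is not an identity.

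\textbf{Step 3.} Step 3 fails as well, and the symmetry it aims at is false. The map $(p_A,p_B)\mapsto(q_B,q_A)$ preserves $A$'s probability of winning the set. It does not exchange $\theta_S(6,h)$ with $\theta_S(h,6)$; the paper asserts that identity too, wrongly. What the map actually does is swap $A$'s game-winning probabilities between odd and even games, $\theta_G(p_A)\leftrightarrow 1-\theta_G(p_B)$. At the same time it swaps the mean lengths $\mu_G(p_A)\leftrightarrow\mu_G(p_B)$ attached to those games. In particular, $\mu_S(a,b;q_B,q_A)\ne\mu_S(a,b;p_A,p_B)$ for odd $a+b$ unless $\mu_G(p_A)=\mu_G(p_B)$.

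Concretely, take $(p_A,p_B)=(1,\tfrac12)$. The final scores $(6,0),(6,1),(6,2),(6,3),(6,4),(7,5),(6,6)$ have probabilities $\tfrac18,\tfrac38,\tfrac3{16},\tfrac14,\tfrac1{32},\tfrac1{64},\tfrac1{64}$. At $(q_B,q_A)=(\tfrac12,0)$ they are $\tfrac18,\tfrac3{16},\tfrac38,\tfrac18,\tfrac5{32},\tfrac1{64},\tfrac1{64}$. $B$ never wins under either parameter, yet $\theta_S(6,1)$ halves.

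The games then contribute $41.13$ and $44.10$ points to the mean, respectively. The tie-breaker contributes $\mu_{ST}/64\le(2K+2)/64<0.35$ in each case for $K\le 10$. Hence $\mu_S(1,\tfrac12,K)\ne\mu_S(\tfrac12,0,K)$, and by continuity the identity also fails at nearby interior points. Every game length is independent of its winner at these parameters, so this failure is not an artifact of the Step 1 issue.

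The invariance that does hold for point counts is $(p_A,p_B)\mapsto(q_A,q_B)$. Flipping the winner of every point leaves all stopping rules and the serving schedule unchanged.
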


\begin{proof}
The results follow immediately using the iterated rules for mean and variance, where the conditioning is on $(G_A,G_B)$. The last identities are immediate from $\theta_S(p_A,p_B,K) = \theta_S(q_B,q_A,K)$.
\end{proof}

Given $(p_A,p_B,K)$, these results for a set provide us an informative summary about the probabilities of player $A$ ($B$) winning with a certain score, as well as the conditional mean and the conditional standard deviation of the number of points that will be played under different score configurations as in Table \ref{tab: set statistics}. Note that a set score of $(7,6)$ or $(6,7)$ means that the set went into a set tie-breaker and $A$ or $B$, respectively, won the set tie-breaker.
\begin{table}[h]
\begin{center}
\begin{tabular}{||c||c|c|c||c|c||} \hline
Loser's & \multicolumn{3}{c||}{Probability} & \multicolumn{2}{c||}{Points Played} \\ \cline{2-4} \cline{5-6}
Score in Set & $A$ wins & $B$ wins & $A$ or $B$ wins & CMean & CVariance \\ \hline\hline
$0$ & $\theta_S(6,0)$ & $\theta_S(0,6)$ &  $\theta_S(0)$ & $\mu_S(0)$ & $\sigma_S^2(0)$ \\ 
$1$ & $\theta_S(6,1)$ & $\theta_S(1,6)$ & $\theta_S(1)$ & $\mu_S(1)$ & $\sigma_S^2(1)$ \\ 
$2$ & $\theta_S(6,2)$ & $\theta_S(2,6)$ & $\theta_S(2)$ & $\mu_S(2)$ & $\sigma_S^2(2)$ \\ 
$3$ & $\theta_S(6,3$ & $\theta_S(3,6)$ & $\theta_S(3)$ & $\mu_S(3)$ & $\sigma_S^2(3)$ \\ 
$4$ & $\theta_S(6,4)$ & $\theta_S(4,6)$ & $\theta_S(4)$ & $\mu_S(4)$ & $\sigma_S^2(4)$ \\ 
$5$ & $\theta_S(7,5)$ & $\theta_S(5,7)$ &  $\theta_S(5)$ & $\mu_S(5)$ & $\sigma_S^2(5)$ \\ 
$6$ & $\theta_S(7,6)$ & $\theta_S(6,7)$ &  $\theta_S(6)$ & $\mu_S(6)$ & $\sigma_S^2(6)$ \\\hline
Set Overall & $\theta_S$ & $1 - {\theta}_S$ & 1 & $\mu_S$ & $\sigma_S^2$ \\ \hline\hline
\end{tabular}
\caption{Summary of Statistics for a Set given $(p_A,p_B,K)$. In the table, $\theta_S(g_A,g_B,p_A,p_B,K$ will be written as $\theta_S(g_A,g_B)$, and similarly for $\mu_S(g_L,p_A,p_B,K)$ and $\sigma_S(g_L,p_A,p_B,K)$, while $\theta_S(h)$ is the probability of either player winning with the loser having $h$ points.}
\label{tab: set statistics}
\end{center}
\end{table}
As a concrete example, if $p_A=.6,p_B=.55,K=7$, these summary characteristics are contained in Table \ref{tab: characteristics for set with (pA,pB,K) = (.6,.55,7)}.
\begin{table}[h]
\begin{center}
\begin{tabular}{||c||c|c|c||c|c||} \hline
Loser's & \multicolumn{3}{c||}{Probability} & \multicolumn{2}{c||}{Points Played} \\ \cline{2-4} \cline{5-6}
Score in Set & $A$ wins & $B$ wins & $A$ or $B$ wins & CMean & CVariance \\ \hline\hline
       0& 0.021& 0.004&  0.026&   39.495&  42.419 \\
       1& 0.095& 0.012&  0.107&   45.979&  49.127 \\
       2& 0.095& 0.050&  0.145&   52.660&  56.559\\
       3& 0.204& 0.044&  0.248&   59.145&  63.267\\
       4& 0.105& 0.122&  0.227&   65.825&  70.698\\
       5& 0.069& 0.041&  0.109&   78.991&  84.838\\
       6& 0.080& 0.058&  0.138&   90.751&  93.551 \\ \hline
 Overall & 0.669& 0.331&  1.000&   64.352& 267.271 \\ \hline\hline
\end{tabular}
\caption{Summary of Statistics for a Set given $(p_A,p_B,K) = (.6,.55,7)$.}
\label{tab: characteristics for set with (pA,pB,K) = (.6,.55,7)}
\end{center}
\end{table}
Figure \ref{fig: mean plot points set pA=pB=p and contour} presents a plot of the mean number of points played in a set as a function of $p = p_A = p_B$, together with the 2 standard deviation bounds, and also a contour plot of the mean number of points played in a set, again for $K=7$.

\begin{figure}[h]
\begin{center}
\includegraphics[width=.4\textwidth,height=.4\textwidth]{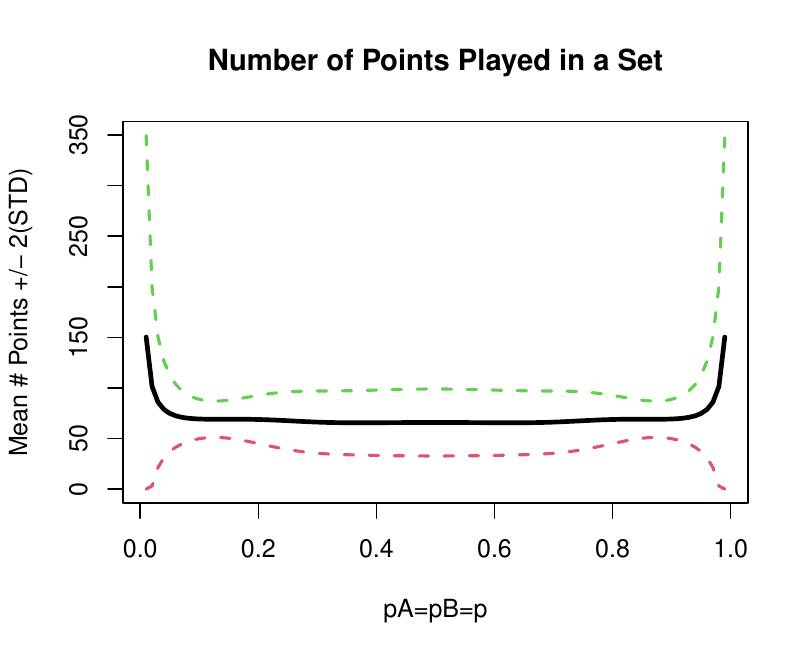}
\includegraphics[width=.4\textwidth,height=.4\textwidth]{Contour_Plot_Set_NumPoints.pdf}
\caption{Plot of mean number of points played in a set with $K=7$ with respect to $p = p_A = p_B$, together with a contour plot.}
\label{fig: mean plot points set pA=pB=p and contour}
\end{center}
\end{figure}


\section{Match}
\label{sec: Match}

\subsection{Match Probabilities}

Finally, we have reached a stage where we could compute the probability that the first server in a match will win the match. We consider the system in the US Open Tennis tournament, where for the men's section, it is a Best-of-Five-Sets match, with the ST for the first 4 sets having $K=7$, while for the fifth set, it has $K=10$. For the women's section, it is a Best-of-Three-Sets match, with the first two sets having $K=7$ for the set tie-breaker, while for the third set, we have $K = 10$.
Consider therefore a Best-of-$L$-Sets match with $L=2Q+1$ match, so $L$ is an odd integer, and with ST parameter $K_0$ for the first $(L-1)$, and $K_1$ for the $L$th set. We shall denote by $\theta_M(p_A,p_B,K_0,K_1,Q)$ the probability that the first server will win the match.
For the men's section in the US Open, $Q = 2$, while for the women's section, $Q=1$.

\begin{theorem}
    For a Best-of-$L$ match with $L = 2Q +1$ and with set tie-breaker parameters $(K_0,K_1)$, 
    \begin{eqnarray*}
    \theta_M(p_A,p_B,K_0,K_1,Q) & = &
    \sum_{h=0}^{Q-1} \Pr\{\mbox{$(Q+1,h)$ score} | (p_A,p_B,K_0)\} + \\ && \Pr\{\mbox{$(Q+1,Q)$ score} | (p_A,p_B,K_0,K_1)\},    
    \end{eqnarray*}
where, for $h \in \{0,1,\ldots,Q-1\}$,
\begin{eqnarray*}
    \lefteqn{\theta_M(Q+1,h,p_A,p_B,K_0,K_1,Q) \equiv \Pr\{\mbox{$(Q+1,h)$ score}\}  } \\
    & = & \left\{
    \sum_{j=0}^Q {{t_A(Q+h)} \choose j} {{t_B(Q+h)} \choose {Q-j}} 
    [\theta_S(p_A,p_B,K_0)]^j [1 - \theta_S(p_A,p_B,K_0)]^{t_A(Q+h) - j} \times \right. \\ && \left.
    [1 - \theta_S(p_B,p_A,K_0)]^{Q-j} [\theta_S(p_B,p_A,K_0)]^{t_B(Q+h) - (Q-j)} \right\} \times \\
    && [\theta_S(p_A,p_B,K_0)]^{J_A(Q+1+h)} [1 - \theta_S(p_B,p_A,K_0)]^{1 - J_A(Q+1+h)};
\end{eqnarray*}
and
\begin{eqnarray*}
    \lefteqn{ \theta_M(Q+1,Q,p_A,p_B,K_0,K_1,Q) \equiv \Pr\{\mbox{$(Q+1,Q)$ score}\}  } \\
    & = & \left\{
    \sum_{j=0}^Q {{Q} \choose j}^2 [\theta_S(p_A,p_B,K_0)\theta_S(p_B,p_A,K_0)]^j \times \right. \\
    && \left.
     [(1 - \theta_S(p_A,p_B,K_0))(1 - \theta_S(p_B,p_A,K_0))]^{Q - j}  \right\} \times \\
    && [\theta_S(p_A,p_B,K_1)]^{J_A(2Q+1)} [1 - \theta_S(p_B,p_A,K_1)]^{1 - J_A(2Q+1)}.
\end{eqnarray*}
Alternatively,
\begin{eqnarray*}
\lefteqn{\theta_M(p_A,p_B,K,Q)   } \\ & = &
\Pr\{B(2Q,\theta_S(p_A,p_B,K)) +  B(2Q,1-\theta_S(p_B,p_A,K)) \ge Q\} +  \\ &&
\Pr\{B(2Q,\theta_S(p_A,p_B,K)) + B(2Q,1-\theta_S(p_B,p_A,K)) \ge Q\} \theta_S(p_A,p_B,K).
\end{eqnarray*}
In addition, $\theta_M(p_A,p_B,K,Q) = \theta_M(q_B,q_A,K,Q)$.
\end{theorem}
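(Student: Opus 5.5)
The plan is to mirror the structure of Proposition \ref{prop: set tie-breaker probs} and the set theorem, with sets playing the role that points play there. Two modelling facts come first, and I expect them to be the main obstacle because they are about the model rather than about algebra.

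First, the sets are mutually independent. Each set is a function of disjoint blocks of the independent sequences $V_1,V_2,\ldots$ and $W_1,W_2,\ldots$, and each set ends with probability one by the STT proposition and Proposition \ref{prop: game tie-breaker ends}.

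Second, whoever serves first in a set is determined by the indicator $J_A$, exactly as $t_A$ and $J_A$ encode in the statement. So I will adopt the convention, implicit in the formula, that $A$ serves first in the odd-numbered sets and $B$ in the even-numbered ones. Under the actual rule the first server of a set depends on how the previous set ended, so this alternation has to be taken as an assumption here. I would state it explicitly, since it is what makes the set-level outcomes independent with fixed parameters.

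Given these two facts, a set that $A$ serves first is won by $A$ with probability $\theta_S(p_A,p_B,K_0)$. A set that $B$ serves first is won by $A$ with probability $1-\theta_S(p_B,p_A,K_0)$. The deciding $(2Q+1)$th set uses $K_1$ in place of $K_0$.

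The first main step is the Theorem of Total Probability over the final score. The event that $A$ wins is the disjoint union of the events $\{(Q+1,h)\ \mbox{score}\}$ for $h=0,\ldots,Q$.

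For $h\le Q-1$, $A$ must win exactly $Q$ of the first $Q+h$ sets and then win set $Q+h+1$. I split according to the number $j$ of those $Q$ wins that come from sets $A$ served first; there are $t_A(Q+h)$ such sets and $t_B(Q+h)$ sets served first by $B$. Summing over $j$ gives the convolution of two independent binomials with the stated binomial coefficients. Multiplying by the probability that $A$ wins the last set, which depends on $J_A(Q+1+h)$, gives the displayed expression.

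For $h=Q$, the first $2Q$ sets split evenly, with $t_A(2Q)=t_B(2Q)=Q$. This gives the squared-binomial-coefficient sum, which is then multiplied by the deciding-set probability under $K_1$.

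For the alternative form I would use the ``extend to full length'' device from the proof of Proposition \ref{prop: set tie-breaker probs}. Imagine all $2Q$ non-deciding sets are played, regardless of whether the match is already decided. $A$ wins without needing a decider exactly when $B(Q,\theta_S(p_A,p_B,K_0))+B(Q,1-\theta_S(p_B,p_A,K_0))\ge Q+1$. If this sum equals $Q$, the decider is reached, and $A$ wins it with the $K_1$ probability. The displayed alternative appears to contain typos: the binomial sizes should be $Q$, the thresholds should be $\ge Q+1$ and $=Q$, and the last factor should be the $K_1$ deciding-set probability. I would prove this corrected version and remark on the discrepancy.

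Finally, the symmetry $\theta_M(p_A,p_B,\cdot)=\theta_M(q_B,q_A,\cdot)$ follows from the alternative form by checking that every per-set probability is unchanged under $(p_A,p_B)\mapsto(q_B,q_A)$.
\begin{itemize}
\item For a set $A$ serves first, $\theta_S(q_B,q_A,K)=\theta_S(p_A,p_B,K)$ by the set theorem.
\item For a set $B$ serves first, the new value is $1-\theta_S(q_A,q_B,K)$. Applying the same identity with the roles of the arguments swapped, $\theta_S(q_A,q_B,K)=\theta_S(p_B,p_A,K)$, so this equals $1-\theta_S(p_B,p_A,K)$.
\end{itemize}
Hence both binomial success probabilities are unchanged, and so is the decider factor, for both $K_0$ and $K_1$. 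Therefore $\theta_M$ is invariant.
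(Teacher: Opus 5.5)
Your proposal is correct and follows essentially the paper's route: total probability over the final set score, the two-binomial convolution split by who serves first in each set, and the extend-to-$2Q$-sets device for the alternative form, where your repairs (binomial sizes $Q$, thresholds $\ge Q+1$ and $=Q$, and a $K_1$ decider factor) are right. The symmetry then comes from $\theta_S(p_A,p_B,K)=\theta_S(q_B,q_A,K)$ applied to both per-set probabilities, a step you spell out more fully than the paper does. One correction to your framing: the alternation of first servers need not be assumed. The same extension argument at the set level gives $\theta_S(p_A,p_B,K)=1-\theta_S(p_B,p_A,K)$, since each player serves five of the first ten games and one of games 11--12, and the tie-breaker's win probability does not depend on its first server; so set outcomes are i.i.d.\ under the actual serving rule, and the stated formula (which then collapses via Vandermonde's identity) holds there too, as the paper remarks right after the theorem.
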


\begin{proof}
 The proof is very similar to that for obtaining the set probability. The only difference is that for sets $1$ to $(L-1) = 2Q$, the set tie-breaker parameter is $K_0$, while if the match reaches the final set, the $L$th set, then the set tie-breaker parameter is $K_1$. Also, note that when player $B$ is serving, the relevant set probability is  $\theta_S(p_B,p_A,K)$, so player $A$ will win this set with probability $[1 - \theta_S(p_B,p_A,K)]$.

 The alternative form for $\theta_M(p_A,p_B,K,L)$ is obtained using the same arguments as in previous propositions, while the final identity follows directly from the alternative form and since $\theta_S(p_A,p_B,K) = \theta_S(q_B,q_A,K)$.
\end{proof}

For the match, we could actually simplify the expressions since we have found out that it does not matter who serves first in a set in terms of winning a set. Let $(S_A,S_B)$ be the number of sets won by player $A$ and player $B$, respectively, in a match. For a best of $L = 2Q+1$ match, the match ends whenever $\{S_A = Q+1\}$ or $\{S_B = Q+1\}$. It is now easy to see, because of the afore-mentioned property above, that the joint probability mass function (JPMF) of $(S_A,S_B)$ is
\begin{eqnarray*}
\lefteqn{   p_{(S_A,S_B)}(a,b\} \equiv \Pr\{S_A=a,S_B=b|p_A,p_B,K_0,K_1,Q\} } \\ & = &
    \left\{
    \begin{array}{lll}
    {{a+b} \choose a} \theta_S(p_A,p_B,K_0)^a (1 - \theta_S(p_A,p_B,K_0))^b \ \mbox{if}\ (a \le Q, b \le Q) \\
    p_{(S_A,S_B)}(Q,b\} \theta_S(p_A,p_B,K_0) \ \mbox{if}\ (a = Q+1, b < Q) \\
    p_{(S_A,S_B)}(Q,Q\} \theta_S(p_A,p_B,K_1) \ \mbox{if}\ (a = Q+1, b = Q) \\
        p_{(S_A,S_B)}(a,Q\} (1-\theta_S(p_A,p_B,K_0)) \ \mbox{if}\ (a < Q, b = Q+1) \\
    p_{(S_A,S_B)}(Q,Q\} (1-\theta_S(p_A,p_B,K_1)) \ \mbox{if}\ (a = Q, b = Q+1) \\
0 \ \mbox{otherwise}
    \end{array}
    \right..
\end{eqnarray*}
From this JPMF, we could obtain the probability that the first server wins the match via
$$\theta_M(p_A,p_B,K_0,K_1,Q) = \sum_{b=0}^Q p_{(S_A,S_B)}(Q+1,b|p_A,p_B,K_0,K_1,Q).$$
Also, note that the JPMF of $(S_A,S_B)$ provides the probabilities of the different match final scores in terms of the number of sets won by the players, which takes values in the set 
$$\{(Q+1,0),(Q+1,1),\ldots,(Q+1,Q),(0,Q+1),(1,Q+1),\ldots,(Q,Q+1)\}.$$

Figure \ref{fig: contour match best of five and three} presents the contour plots of the first server winning the match, for a best-of-5-sets match and for a best-of-3-sets match, respectively. Observe again that when $p_A = p_B = p \in (0,1)$, the probability of winning for the first server, hence the first receiver, is 1/2, so that {\em when the players are evenly matched, there is no advantage to serving first or receiving first}. Thus, the tennis match system is a fair system.

In addition, as to be expected, the match system is more efficient than the set system in determining the better player. Also, a best-of-five-sets match is more efficient, at least in determining the better player, than a best-of-three-sets match. Of course, one may argue that the increase in efficiency of a best-of-five-sets match compared to a best-of-three-sets match must be viewed also with respect to other considerations, such as the length of the match, the toll it takes on the players, and the level of excitement the match will bring to the spectators and fans.

\begin{figure}[h]
    \centering
\includegraphics[width=.4\textwidth,height=.4\textwidth]{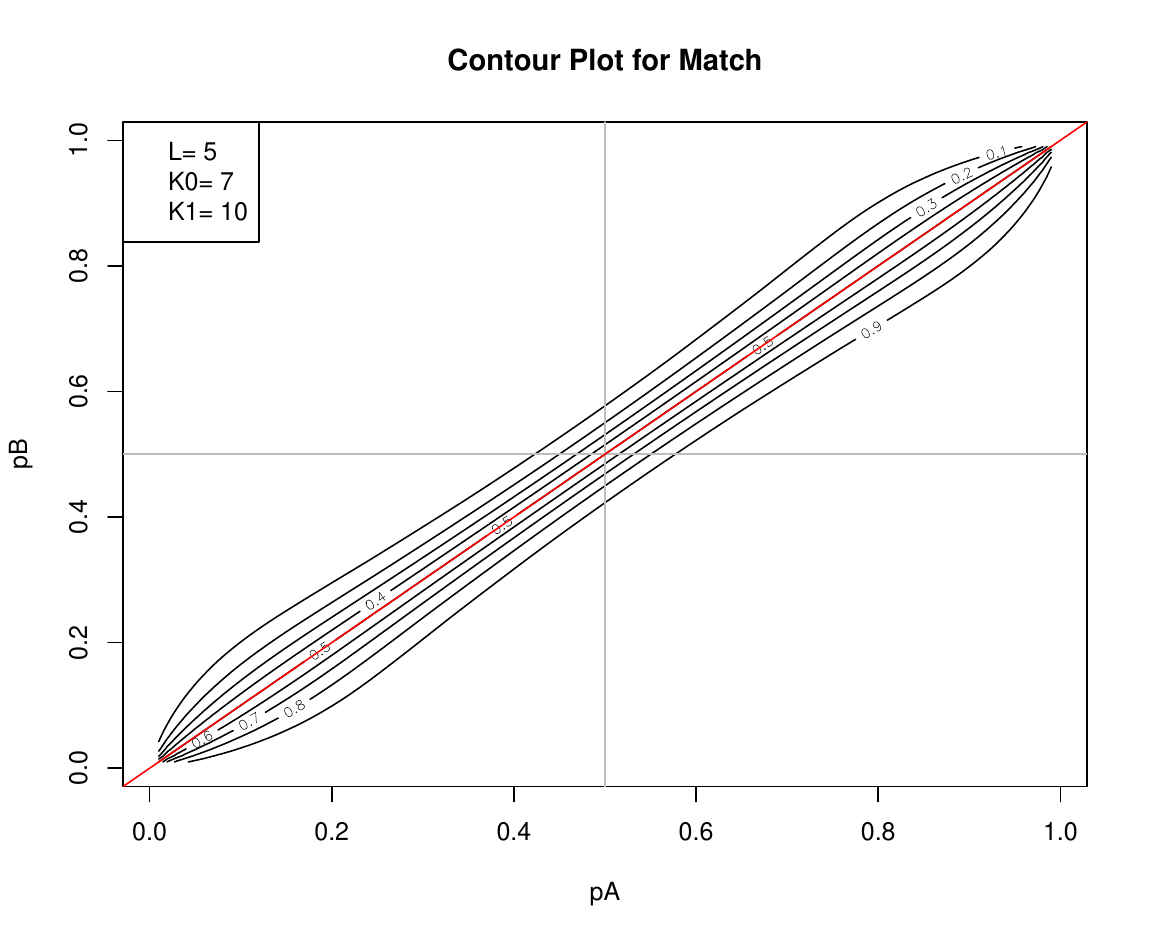} 
\includegraphics[width=.4\textwidth,height=.4\textwidth]{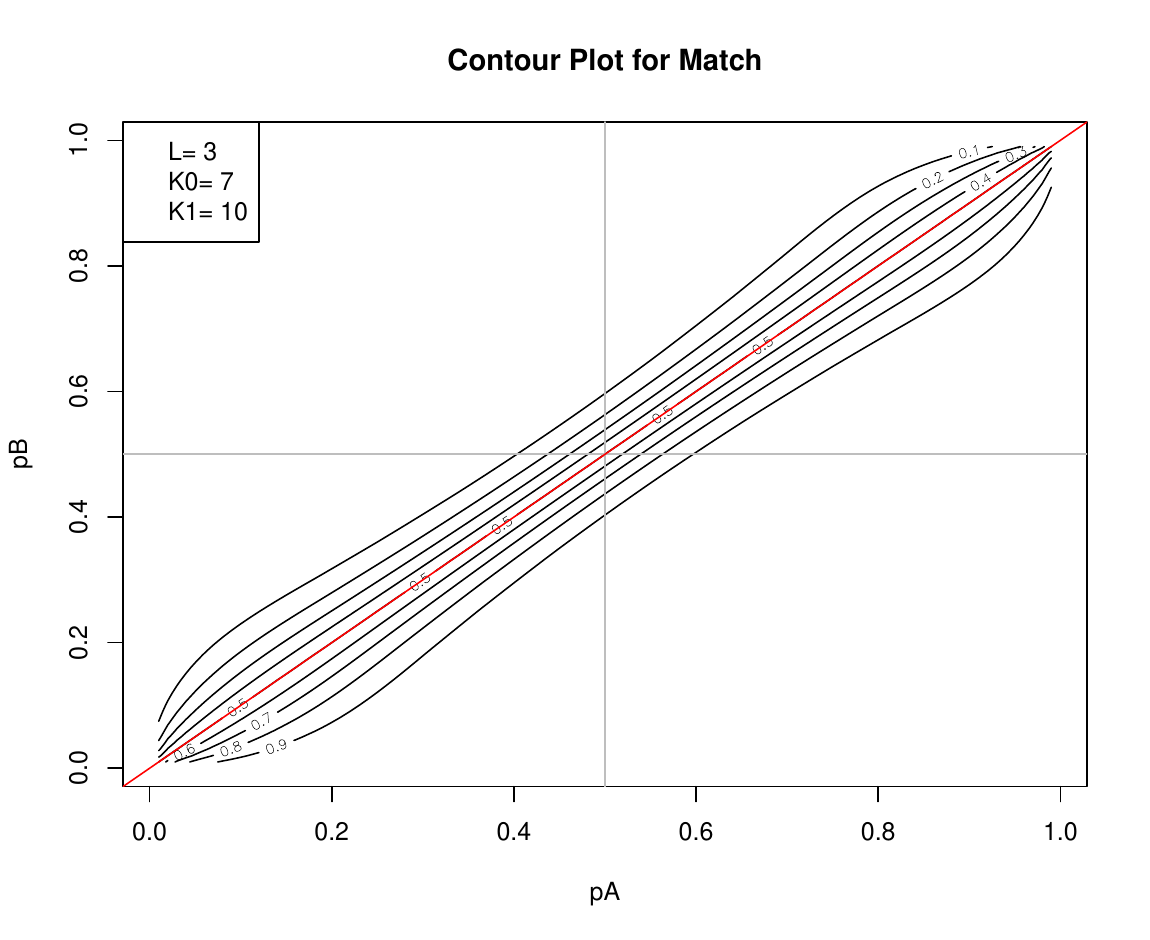}
    \caption{Contour plot of the probability of first server winning the {\bf best-of-five-sets} match (men's matches) and a {\bf best-of-three-sets} match (women's matches) when $K_0 = 7$ and $K_1 = 10$. $pA$ is the probability of first server winning point on his/her serve; while $pB$ is the probability of the first receiver winning point on his/her serve.}
    \label{fig: contour match best of five and three}
\end{figure}

\subsection{Number of Points in a Match}

Finally, we consider the number of points that will be played in a Best-of-$L$ match, with $L = 2Q+1$. Let us denote by $N_M$ the random variable denoting the total number of points that will be played for the match. Let $N_M(a,b)$ be the number of points in the match when the match score is $(S_A,S_B) = (a,b)$. Then, we have the decomposition of $N_M$ given by
\begin{eqnarray*}
    N_M & = & \sum_{a = 0}^{Q-1} N_M(a,Q+1) I\{(S_A,S_B) = (a,Q+1)\} + \\ && 
    N_M(Q,Q+1) I\{(S_A,S_B) = (Q,Q+1)\} + \\
    && \sum_{b = 0}^{Q-1} N_M(Q+1,b) I\{(S_A,S_B) = (Q+1,b)\} + \\ && 
    N_M(Q,Q+1) I\{(S_A,S_B) = (Q+1,Q)\}.
\end{eqnarray*}
We have the conditional means and conditional variances
$$E\{N_M(a,Q+1)\} = (a+Q+1) \mu_S(p_A,p_B,K_0) \ \mbox{if} \ a < Q;$$
$$E\{N_M(Q,Q+1)\} = 2Q \mu_S(p_A,p_B,K_0) + \mu_S(p_A,p_B,K_1);$$
$$E\{N_M(Q+1,b)\} = (b+Q+1) \mu_S(p_A,p_B,K_0) \ \mbox{if} \ a < Q;$$
$$E\{N_M(Q+1,Q)\} = 2Q \mu_S(p_A,p_B,K_0) + \mu_S(p_A,p_B,K_1);$$
$$Var\{N_M(a,Q+1)\} = (a+Q+1) \sigma_S^2(p_A,p_B,K_0) \ \mbox{if} \ a < Q;$$
$$Var\{N_M(Q,Q+1)\} = 2Q \sigma_S^2(p_A,p_B,K_0) + \sigma_S^2(p_A,p_B,K_1);$$
$$Var\{N_M(Q+1,b)\} = (b+Q+1) \sigma_S^2(p_A,p_B,K_0) \ \mbox{if} \ a < Q;$$
$$Var\{N_M(Q+1,Q)\} = 2Q \sigma_S^2(p_A,p_B,K_0) + \sigma_S^2(p_A,p_B,K_1).$$
The conditional mean and variance of $N_M$, given $(S_A,S_B)$, is therefore
\begin{eqnarray*}
    E\{N_M|(S_A,S_B)\} & = & \sum_{a = 0}^{Q-1} E\{N_M(a,Q+1)\}  I\{(S_A,S_B) = (a,Q+1)\} + \\ && E\{N_M(Q,Q+1)\} I\{(S_A,S_B) = (Q,Q+1)\} + \\
    && \sum_{b = 0}^{Q-1} E\{N_M(Q+1,b)\} I\{(S_A,S_B) = (Q+1,b)\} + \\ && E\{N_M(Q+1,Q)\} I\{(S_A,S_B) = (Q+1,Q)\};
\end{eqnarray*}
\begin{eqnarray*}
    Var\{N_M|(S_A,S_B)\} & = & \sum_{a = 0}^{Q-1} Var\{N_M(a,Q+1)\}  I\{(S_A,S_B) = (a,Q+1)\} + \\ && Var\{N_M(Q,Q+1)\} I\{(S_A,S_B) = (Q,Q+1)\} + \\
    && \sum_{b = 0}^{Q-1} Var\{N_M(Q+1,b)\} I\{(S_A,S_B) = (Q+1,b)\} + \\ && Var\{N_M(Q+1,Q)\} I\{(S_A,S_B) = (Q+1,Q)\}.
\end{eqnarray*}
From these conditional means and variances, we are then able to obtain the mean and variance of $N_M$ via the iterated rules for the mean and variance (cf., Ross \cite{Ross1998}), where the conditioning is on $(S_A,S_B)$:
$$\mu_M(p_A,p_B,K_0,K_1,Q) = E\{N_M\} = E\{E[N_M|(S_A,S_B)]\};$$
\begin{eqnarray*}
\sigma_M^2(p_A,p_B,K_0,K_1,Q) & = & Var\{N_M\}  = E\{Var[N_M|(S_A,S_B)]\} + Var\{E[N_M|(S_A,S_B)]\}.    
\end{eqnarray*}
The standard deviation of $N_M(L)$ then obtains via $$\sigma_M(p_A,p_B,K_0,K_1,Q) = +\sqrt{\sigma_M^2(p_A,p_B,K_0,K_1,Q)}.$$

For the match, we could summarize the stochastic characteristics via Table \ref{tab: match statistics}. In this table, for $h = 0, 1, \ldots, Q,$
$$\theta_M(h) =  \theta_M(h,Q+1) + \theta_M(Q+1,h);$$
$$\mu_M(h) = \mu_M(h,Q+1) = \mu_M(Q+1,h);$$  $$\sigma_M^2(h) = \sigma_M^2(h,Q+1) = \sigma_M^2(Q+1,h).$$

\begin{table}[h]
\begin{center}
\begin{tabular}{||c||c|c|c||c|c||} \hline
Loser's & \multicolumn{3}{c||}{Probability} & \multicolumn{2}{c||}{Points Played} \\ \cline{2-4} \cline{5-6}
Score & $A$ wins & $B$ wins & $A$ or $B$ wins & CMean & CVariance \\ \hline
$0$ & $\theta_M(Q+1,0)$ & $\theta_M(0,Q+1)$ & $\theta_M(0)$ & $\mu_M(0)$ & $\sigma_M^2(0)$ \\ 
$1$ & $\theta_M(Q+1,1)$ & $\theta_M(1,Q+1)$ &$\theta_M(1)$ & $\mu_M(1)$ & $\sigma_M^2(1)$ \\ 
\vdots & \vdots & \vdots & \vdots & \vdots & \vdots \\
$Q$ & $\theta_M(Q+1,Q)$ & $\theta_M(Q,Q+1)$ & $\theta_M(Q)$ & $\mu_M(Q)$ & $\sigma_M^2(Q)$ \\ \hline
Overall & $\theta_M$ & $1-\theta_M$ & 1 & $\mu_M$ & $\sigma_M^2$ \\ \hline\hline
\end{tabular}
\caption{Summary of Statistics for a Match given $(p_A,p_B,K_0,K_1,L=2Q+1)$. In the table, $\theta_M(S_A,S_B,p_A,p_B,K_0,K_1,L)$ will be written as $\theta_M(S_A,S_B)$, and similarly for $\mu_M(S_L,p_A,p_B,K_0,K_1,L)$ and $\sigma_S(S_L,p_A,p_B,K_0,K_1,L)$.}
\label{tab: match statistics}
\end{center}
\end{table}

For a concrete illustration, Table \ref{tab: characteristics for match with (p_A,p_B,K_0,K_1,Q) = (.6,.55,7, 10,2)} presents the characteristics for a Best-of-5 Match when $(p_A,p_B) = (.60,.55)$. In particular, observe that the match system magnifies the chance of the better player winning the match. In this illustration, player $A$ has probability 0.60 of winning his service point, while player $B$ has probability 0.55 of winning his service point, but now for the whole match, player $A$ has probability of 0.795 of winning the match, while player $B$ only has a 0.205 chance of winning the match.

\begin{table}[h]
\begin{center}
\begin{tabular}{||c||c|c|c||c|c||} \hline
Loser's & \multicolumn{3}{c||}{Probability} & \multicolumn{2}{c||}{Points Played} \\ \cline{2-4} \cline{5-6}
Score in Set & $A$ wins & $B$ wins & $A$ or $B$ wins & CMean & CVariance \\ \hline\hline
      0& 0.300& 0.036& 0.336 & 193.056&  801.811 \\
       1 & 0.297 & 0.072 & 0.370 & 257.408 & 1069.082 \\
       2 & 0.196 & 0.097 & 0.293 & 322.488 &1378.232 \\ \hline
 Overall & 0.795 & 0.205 & 1.000 & 254.894 & 3700.152 \\ \hline\hline
\end{tabular}
\caption{Summary of Statistics for a Match given $(p_A,p_B,K_0,K_1,Q) = (.6,.55,7, 10,2)$.}
\label{tab: characteristics for match with (p_A,p_B,K_0,K_1,Q) = (.6,.55,7, 10,2)}
\end{center}
\end{table}

Figure \ref{fig: mean plot points match pA=pB=p and contour plot} depicts a plot of the mean number of points in a Best-of-Five-Sets Match when the two players are of equal abilities, together with the two standard deviation bounds, as well as a contour plot. Interestingly, when $p$ is in the interval $[.1,.9]$, the mean number of points does not fluctuate much, but past this interval, the mean number of points steeply increase, and this is because in each of the sets, tie-breakers have a high chance of happening, and in each of the set tie-breaker's tie-breaker, there could be very many points played (see Figure \ref{fig: Mean and Std Num Points Set Tie-Breaker}). Looking at the contour plot, observe the symmetrical property, with respect to the 45-degree line, and the saddle-shaped property, again with respect to the 45-degree line, of the mapping $(p_A,p_B) \mapsto \mu_M(p_A,p_B,K_0,K_1,Q)$. Compared to the contour plot for a set, depicted in Figure \ref{fig: mean plot points set pA=pB=p and contour}, this saddle has a longer seat portion.

\begin{figure}[h]
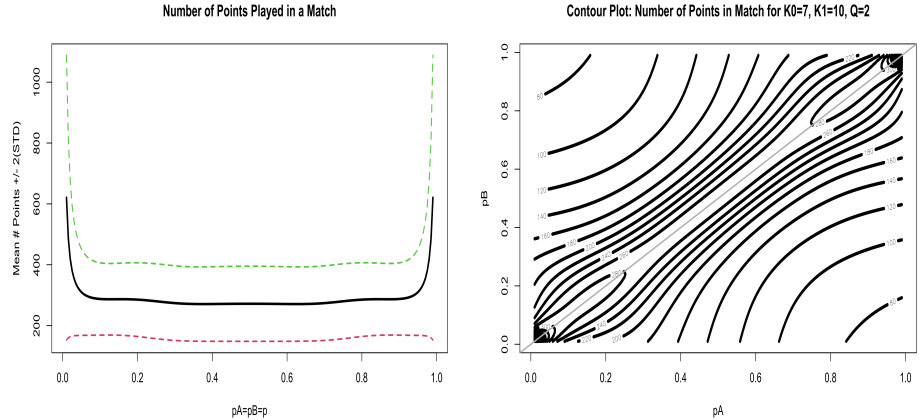

\begin{center}
\includegraphics[width=.4\textwidth,height=.4\textwidth]{Plot_Number_of_Points_Match_pA_pB_p.pdf} 
\includegraphics[width=.4\textwidth,height=.4\textwidth]{Contour_Plot_Match_NumPoints.pdf}
\caption{Plot of mean number of points played in a Best-of-5 Match with $K_0=7$ and $K_1=10$ with respect to $p = p_A = p_B$, together with a contour plot.}
\label{fig: mean plot points match pA=pB=p and contour plot}
\end{center}
\end{figure}

\section{Efficiencies and Comparison of Systems}
\label{sec: efficiency and comparison}

\OLDPROOF{
\RED{maybe better to move the discussions of efficiency in a separate section (here).

Also,now thinking that maybe better efficiency measures are:

When there is one parameter, $p$, with $\theta(p)$, we could measure efficiency via, with respect to a prior $\pi(p)$:

$$Eff = \frac{\int_0^{1/2} (1/2 - \theta(p)) \pi(p) dp +
\int_{1/2}^1 (\theta(p)-1/2) \pi(p) dp}
{\int_0^{1/2} (1/2 - 0) \pi(p) dp +
\int_{1/2}^1 (1-1/2) \pi(p) dp}$$

while with two parameters $(p_A,p_B)$ with $\theta(p_A,p_B)$, to measure efficiency via:

$$Eff = \frac{\int_{p_A < p_B} (1/2 - \theta(p_A,p_B)) \pi(p_A,p_B) dp_A dp_B +
\int_{p_A>p_B} (\theta(p_A,p_B)-1/2) \pi(p_A,p_B) dp_A dp_B}
{\int_{p_A < p_B} (1/2 - 0) \pi(p_A,p_B) dp_A dp_B +
\int_{p_A>p_B} (1-1/2) \pi(p_A,p_B) dp_A dp_B}$$

In both cases, the ideal system will get a 100\% efficiency rating, while just a random system will get zero efficiency rating.

We could use the second measure to measure the efficiency of the set system and the match system.

Relative efficiency could then be measured as well by taking ratio.

We could compare the game system with Best-of-K systems.

We could compare the set and match systems relative to the STT system. We don't really have other systems to compare to, except Best-of-L systems.

}}

In essence, a tennis match is a statistical system to decide who is better between the two competing players. Indeed, Miles \cite{Miles1984} alluded to this aspect of scoring systems being statistical systems to determine the better player.  But how do we define who is the better player? One could argue that whoever wins the match is the better player, but this is a rather simplistic viewpoint. For one could have a biased coin whose probability of landing up `Head' on any given toss being 0.75, but when the coin is tossed 10 independent times, one could obtain 4 `Heads' and 6 `Tails' and conclude {\em wrongly} that the coin is biased in favor of `Tails'. Granted that the probability of the event of getting at least 6 tails out of 10 tosses for this coin is only 1.973\%, nonetheless there is a non-zero probability of concluding that the coin is loaded in favor of `Tails,' when in reality it is loaded in favor of `Heads'. Similarly, a better player in a tennis match could lose to the inferior player. But how do we define `better'?

Consider again the coin mentioned above. We say that it is loaded in favor of `Head' because its probability of landing up a `Head' in one toss is 0.75, which is greater than 0.50. But what is this value of 0.75? From the frequentist definition of probability, justified by the Strong Law of Large Numbers (SLLN), it is the limit of the proportion ($p_n$) of the occurrence of Heads after ($n$) tosses, as we keep tossing the coin ($n \rightarrow \infty$):
$$\lim_{n\rightarrow\infty} p_n = \lim_{n\rightarrow\infty}\left[\frac{\mbox{Number of Heads After $n$ Tosses}}{n}\right] = 0.75.$$
As such, the `fairness' or `loadedness' of a coin is defined with respect to a hypothetical experiment of tossing the coin indefinitely.

We adopt a similar idea to define who is a better player between two competing players. We could start by defining that player $A$ is better than player $B$ if the proportion of times that player $A$ wins the match over a hypothetical infinite number of matches between the two players is greater than $1/2$. However, instead of basing our definition on the outcome of a match, we instead focus on the more elemental outcome of a point played between the two players. But there is an asymmetry in tennis in that the player serving the point will usually have an advantage. Thus, we focus on the probability of each player winning the point on their serve, which are the two parameters $p_A$ and $p_B$. Thus, $p_A$ ($p_B$) could be viewed as the limiting proportion of player $A$ ($B$) winning the point on his/her serve if points are played indefinitely. We could now then say that player $A$ is better than player $B$ if $p_A > p_B$; player $A$ is inferior to player $B$ if $p_A < p_B$; and they are of equal abilities if $p_A = p_B$. A tennis match, when stripped of its other elements such as players competitiveness, fan excitement, and sponsor commercialization, is therefore just a statistical system to decide whether $p_A > p_B$ or $p_A < p_B$, statistical in the sense that the decision will be based on a finite, instead of an infinite, number of points played. Being a statistical system, analogously to the coin experiment, there is the possibility that player $A$ will lose {\em even} if it is the case that $p_A > p_B$, or player $A$ will win {\em even} if it is the case that $p_A < p_B$.
One could therefore ask how `good' these statistical systems are for determining who the better player is between two competitors, which is a question of the statistical {\em efficiencies} of systems. Miles \cite{Miles1984} formulated these questions in terms of statistical hypothesis testing, and thereby examined the efficiency of scoring systems in terms of the probabilities of statistical Type I and Type II errors.  

\subsection{One-Parameter Systems}

We begin by first considering one-parameter systems, such as the tennis game tie-breaker system, the tennis game system, or a Best-of-$K$ system, where there is a probability $p$ of winning the point, and we would like to determine the efficiency of a system whose goal is to determine if $p > 1/2$ or if $p < 1/2$. This is the case, for instance, with the tennis game tie-breaker system, where we have the  mapping $p \mapsto \theta_{GT}(p)$, or the tennis game system where we have the mapping $p \mapsto \theta_G(p)$. {\em Ideally}, we would like a system, called an oracle system (OS), such that when $p > 1/2$, then $\theta_{OS}(p) = 1$; when $p < 1/2$, then $\theta_{OS}(p) = 0$; and when $p = 1/2$, then $\theta_{OS}(1/2) = 1/2$. To measure the efficiency of a system, we could compare it with this oracle system over a distribution $\Pi(\cdot)$ of possible values of $p$.

\begin{definition}
    Let $\mbox{SYS}$ be a one-parameter system such that $\theta_{{SYS}}(p)$ is the probability that the player with probability $p$ of winning a point will win the game according to this system. The efficiency of this system with respect to a probability distribution $\Pi(\cdot)$ over the possible values of $p$ is
    \begin{eqnarray*}
    \mbox{Eff}({\mbox{SYS}};\Pi) & = &
    \frac{\int_0^{1/2} [(1/2) - \theta_{SYS}(p)] \Pi(dp) + \int_{1/2}^1 [\theta_{SYS}(p) - (1/2)] \Pi(dp)}{\int_0^{1/2} [(1/2) - 0] \Pi(dp) + \int_{1/2}^1 [1 - (1/2)] \Pi(dp)}
   \\
    & = & \int_0^{1/2} [1 - 2\theta_{SYS}(p)] \Pi(dp) + \int_{1/2}^1 [2\theta_{SYS}(p) - 1] \Pi(dp).
     \end{eqnarray*} 
Thus, $\mbox{Eff}(\mbox{OS};\Pi) = 1$; while a totally random system (RAN) with $\theta_{{RAN}}(p) = 1/2$ will have $\mbox{Eff}(\mbox{RAN};\Pi) = 0$.   
\end{definition}

The distribution $\Pi(\cdot)$ represents either our prior knowledge about the value of $p$, or it could also represent the mechanism that determines the value of $p$ at any given point. It could be a degenerate distribution, but a generally flexible class of distributions would be the beta family of probability distributions. For top professional tennis players, their $\Pi(\cdot)$ will generally be beta distributions with parameters $(\alpha,\beta)$ and with $ \alpha > \beta$. For complicated mappings $p \mapsto \theta(p)$, the efficiency value could be calculated using numerical integration techniques, which is what we did in our numerical examples.

To assess the efficiencies of the game tie-breaker and the game systems, we shall compare them with the more traditional Best-of-$K$ (Bof$K$) system, where $K = 2L+1$. Best-of-$K$ systems are used, for instance, in the Baseball World Series or the NBA Championship Series. It is a system where the winner is the first to get $L+1$ wins. If both players or teams reach $L$ wins, then the rubber match, which is the $K=2L+1$ match can be viewed as a tie-breaker. Thus, in baseball and basketball where it is a Best-of-7 system, $L=3$, so the team to achieve 4 wins is declared the winner. For a Best-of-$(2L+1)$ system, we have
$$\theta_{\mathrm{Bof}K}(p;K=2L+1) = \sum_{j=0}^L {{(L+1)+j-1} \choose L} p^{L+1} q^j.$$

Table \ref{tab: efficiencies of one-parameter systems} presents the efficiency measures of the tennis game tie-breaker system, tennis game system, and Best-of-$K=2L+1$ for $L=3,4,5$, when the distribution over the values of $p$ is a beta distribution with $(\alpha,\beta)$ parameter vector. Figure \ref{fig: comparison of game systems} depicts the probabilities for each of these systems.

\begin{table}[h]
\begin{center}
\begin{tabular}{||c||c|c|c|c|c|c||} \hline\hline
$\alpha$  &  0.5 & 0.5 & 1 & 1 & 2 & 3 \\ \hline
$\beta$   &  0.5 & 1 & 1 & 2 & 1 & 1 \\ \hline\hline
GameGT  & 0.7935 & 0.7738 & 0.6931 & 0.6931 & 0.6931 & 0.7500 \\
Game  &   0.8378 & 0.8214 & 0.7537 & 0.7537 & 0.7537 & 0.8046 \\
Best-Of-7 & 0.8188 & 0.8008 & 0.7265 & 0.7265 & 0.7265 & 0.7812 \\
Best-Of-9 & 0.8382 & 0.8217 & 0.7539 & 0.7539 & 0.7539 & 0.8051 \\
Best-Of-11 & 0.8524 & 0.8372 & 0.7744 & 0.7744 & 0.7744 & 0.8227 \\ \hline\hline
\end{tabular}
\caption{Efficiencies of one-parameter systems, under a beta$(\alpha,\beta)$ distribution over $p$.}
\label{tab: efficiencies of one-parameter systems}
\end{center}
\end{table}

\begin{figure}[h]
    \centering
    \includegraphics[width=.3\textwidth,height=.3\textwidth]{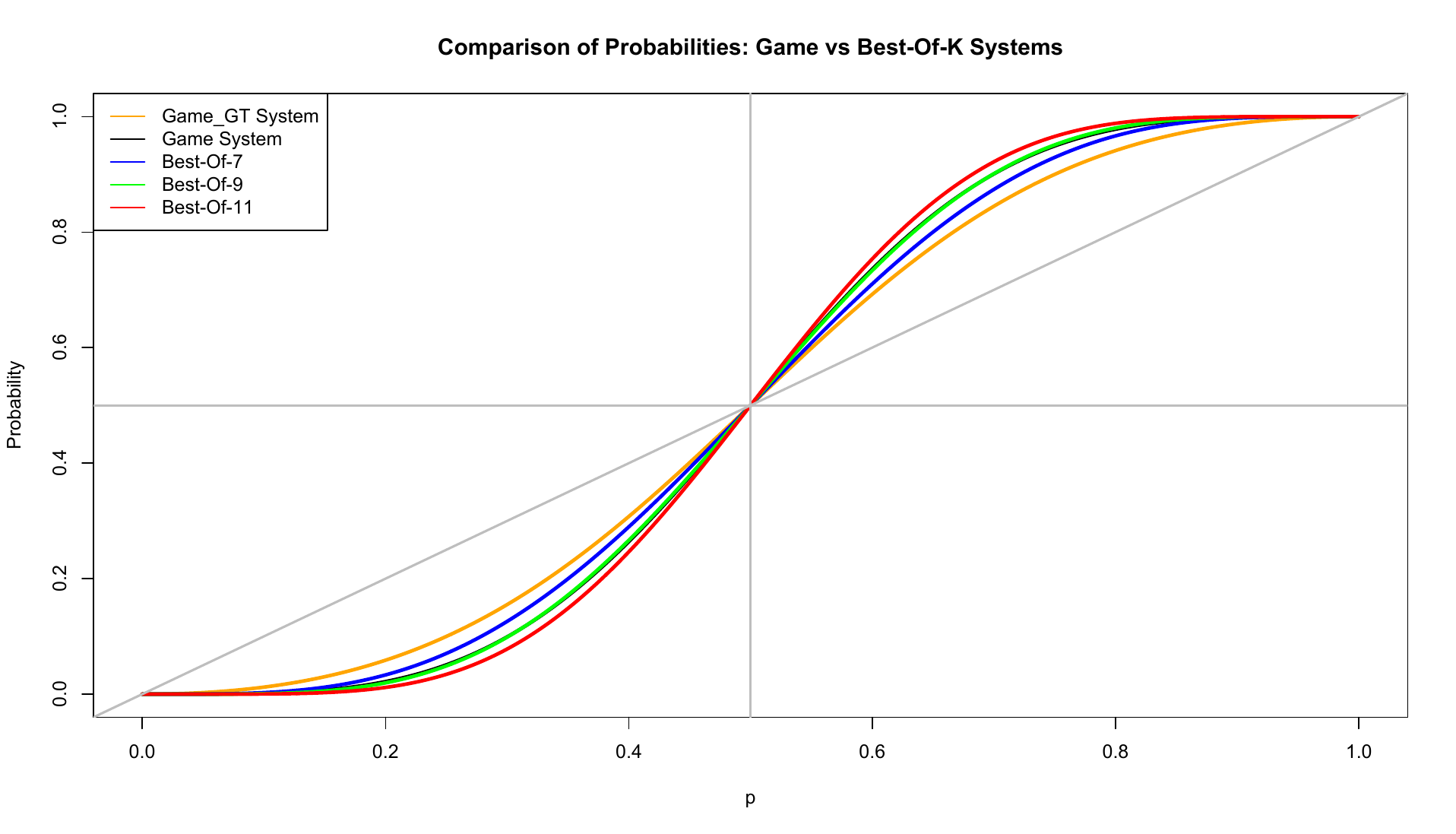}
    \caption{Plot of the probability of server winning the game with respect to server's probability of winning point under the game tennis tie-breaker system (ORANGE), game tennis system (BLACK) and with a best-of-7 (BLUE), best-of-9 (GREEN), and best-of-11 (RED) systems.}
    \label{fig: comparison of game systems}
\end{figure}

From these table and figure, observe that the Best-of-$K$ system that performs closest to the tennis game system is the Best-of-9 system. Best-of-7 system is inferior to the game system, though better than just the game tie-breaker system, and this is shown analytically in Proposition \ref{prop: Best_of-7 System inferior to Game System}. Best-of-11 system dominates the other four systems, though this will also require playing more points.

\begin{proposition}
\label{prop: Best_of-7 System inferior to Game System}
    Let $\theta_G(p)$ denote the probability that the server wins the game in the tennis system (win by at least two points), and the corresponding probability in the best-of-seven system is 
    $$\theta_{\mathrm{Bof7}}(p) = p^4 + 4p^4q + 10p^4q^2 + 20p^4q^3.$$
    The difference in the probabilities between these two systems is
    \[ 
    \theta_G(p) - \theta_{\mathrm{Bof7}}(p) = 20p^3q^3\left( \theta_{GT}(p) - p \right), \qquad \text{where } \theta_{GT}(p) = \frac{p^2}{p^2+q^2}.
    \]
   Thus,
    \[
    \theta_G(p)
    \begin{cases}
    > \theta_{\mathrm{Bof7}}(p), & p \in (1/2, 1),\\
    = \theta_{\mathrm{Bof7}}(p), & p \in \{0, 1/2, 1\},\\
    < \theta_{\mathrm{Bof7}}(p), & p \in (0, 1/2).\\
    \end{cases}
    \]
\end{proposition}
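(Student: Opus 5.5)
The plan is to compare the explicit formula for $\theta_G(p)$ from Theorem \ref{thm: prob wins game} with the best-of-seven formula term by term. By Theorem \ref{thm: prob wins game},
$$\theta_G(p) = p^4 + 4p^4q + 10p^4q^2 + 20p^3q^3\theta_{GT}(p).$$
The best-of-seven expression follows from the general formula $\theta_{\mathrm{Bof}K}$ with $L=3$, since $\binom{3+j}{3}=1,4,10,20$ for $j=0,1,2,3$. This gives $\theta_{\mathrm{Bof7}}(p) = p^4 + 4p^4q + 10p^4q^2 + 20p^4q^3$.

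The first three terms of the two expressions coincide. Subtracting leaves
$$\theta_G(p)-\theta_{\mathrm{Bof7}}(p) = 20p^3q^3\theta_{GT}(p) - 20p^4q^3 = 20p^3q^3(\theta_{GT}(p)-p),$$
where $\theta_{GT}(p)=p^2/(p^2+q^2)$ by Proposition \ref{prop: game probability}. This establishes the displayed identity.

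For the sign analysis, I will simplify $\theta_{GT}(p)-p$ over the common denominator $p^2+q^2>0$:
$$\theta_{GT}(p)-p = \frac{p^2 - p(p^2+q^2)}{p^2+q^2} = \frac{p\,(p - p^2 - q^2)}{p^2+q^2}.$$
Using $p - p^2 = pq$, the numerator becomes $p(pq - q^2) = pq(p-q)$. Hence
$$\theta_G(p)-\theta_{\mathrm{Bof7}}(p) = \frac{20\,p^4q^4\,(p-q)}{p^2+q^2} = \frac{20\,p^4q^4\,(2p-1)}{p^2+q^2}.$$

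The sign cases can now be read off directly:
\begin{itemize}
\item On $(0,1)$ the factor $p^4q^4/(p^2+q^2)$ is strictly positive, so the sign of the difference is the sign of $2p-1$. It is positive on $(1/2,1)$ and negative on $(0,1/2)$.
\item At $p=1/2$ the difference vanishes because $2p-1=0$.
\item At $p\in\{0,1\}$ it vanishes because $pq=0$.
\end{itemize}

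The only point needing care is the algebraic simplification of $\theta_{GT}(p)-p$ into the factored form $pq(p-q)/(p^2+q^2)$. Everything else is direct substitution from earlier results. Note also that $p^2+q^2\ge 1/2$, so there is no division issue at any $p\in[0,1]$.
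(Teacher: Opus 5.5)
Your proposal is correct and follows essentially the same route as the paper: subtract the two expressions so that only the deuce terms remain, factor out $20p^3q^3$, and reduce $\theta_{GT}(p)-p$ to $pq(2p-1)/(p^2+q^2)$, whose sign is that of $2p-1$ on $(0,1)$. Your intermediate numerator $p^2-p(p^2+q^2)$ is actually cleaner than the paper's displayed step, which writes $p^2-(1-p)^2$ inside the parentheses where $p^2+(1-p)^2$ is meant, although the paper's final factored form is right.
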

\begin{proof}
    By Theorem~\ref{thm: prob wins game}, $\theta_G(p) = p^4 + 4p^4 q + 10p^4 q^2 + 20p^3 q^3 \theta_{GT}(p).$ We also have the probability that the server wins the game in the best-of-seven system
    \[
        \theta_{\mathrm{Bof7}}(p) = \sum_{j=0}^3 {{4+j-1} \choose 3} p^4 q^j = p^4 + 4p^4q + 10p^4q^2 + 20p^4q^3.
    \]
    Subtracting gives $\theta_G(p) - \theta_{\mathrm{Bof7}}(p) = 20p^3q^3(\theta_{GT}(p)-p)$. Apparently, this difference is equal to zero when $p = 0$, $q = 1-p = 0$. 
    For $p \in (0,1)$, we have
    \[
    \theta_{GT}(p)-p = \frac{p^2}{p^2+q^2}-p
    = \frac{p^2-p(p^2-(1-p)^2)}{p^2+q^2}
    =  \frac{p(1-p)(2p-1)}{p^2+q^2}.
    \]
    Note that the sign of $\theta_{GT}(p)-p$ matches the sign of $(2p-1)$, establishing the three cases stated in the proposition.
\end{proof}

As alluded to above, we could compare the number of points played in these systems. For a Best-of-$K$ system  the number of points played is always bounded by $K$, in contrast to the tennis game system. Denoting the number of points needed to be played in a Best-of-$K$ system by $N_{BofK}$, we have for $K = 2L+1$,
$$\Pr\{N_{BofK} = n\} = {{n-1} \choose L} \left[p^{L+1} q^{n-1-L} + q^{L+1} p^{n-1-L}\right]$$
for $n = L+1,L+2,\ldots,K$; 0 otherwise.
Again, the mean ($\mu_{BofK}$), variance ($\sigma_{BofK}^2$), and standard deviation ($\sigma_{BofK}$) of $N_{BofK}$ could be obtained from this PMF. For a Best-of-9 system and under $p =.5$, the plot of this PMF is the right panel in Figure \ref{fig: PMF Num Points Game} (the left panel is the PMF for the tennis game system). Its mean is 7.5390, its variance is 1.4828, and its standard deviation is 1.2177. It is interesting to note that even though the number of points in a Best-of-9 system is bounded by 9, whereas for the tennis system there is no upper bound, on average, when $p = .5$, the tennis system requires fewer number of points to be played, though as to be expected, the tennis system has more variability.

\begin{figure}[!htb]
\begin{center}
\includegraphics[width=.3\textwidth,height=.3\textwidth]{PMF_NumPoints_Game_p5.pdf} 
\includegraphics[width=.3\textwidth,height=.3\textwidth]{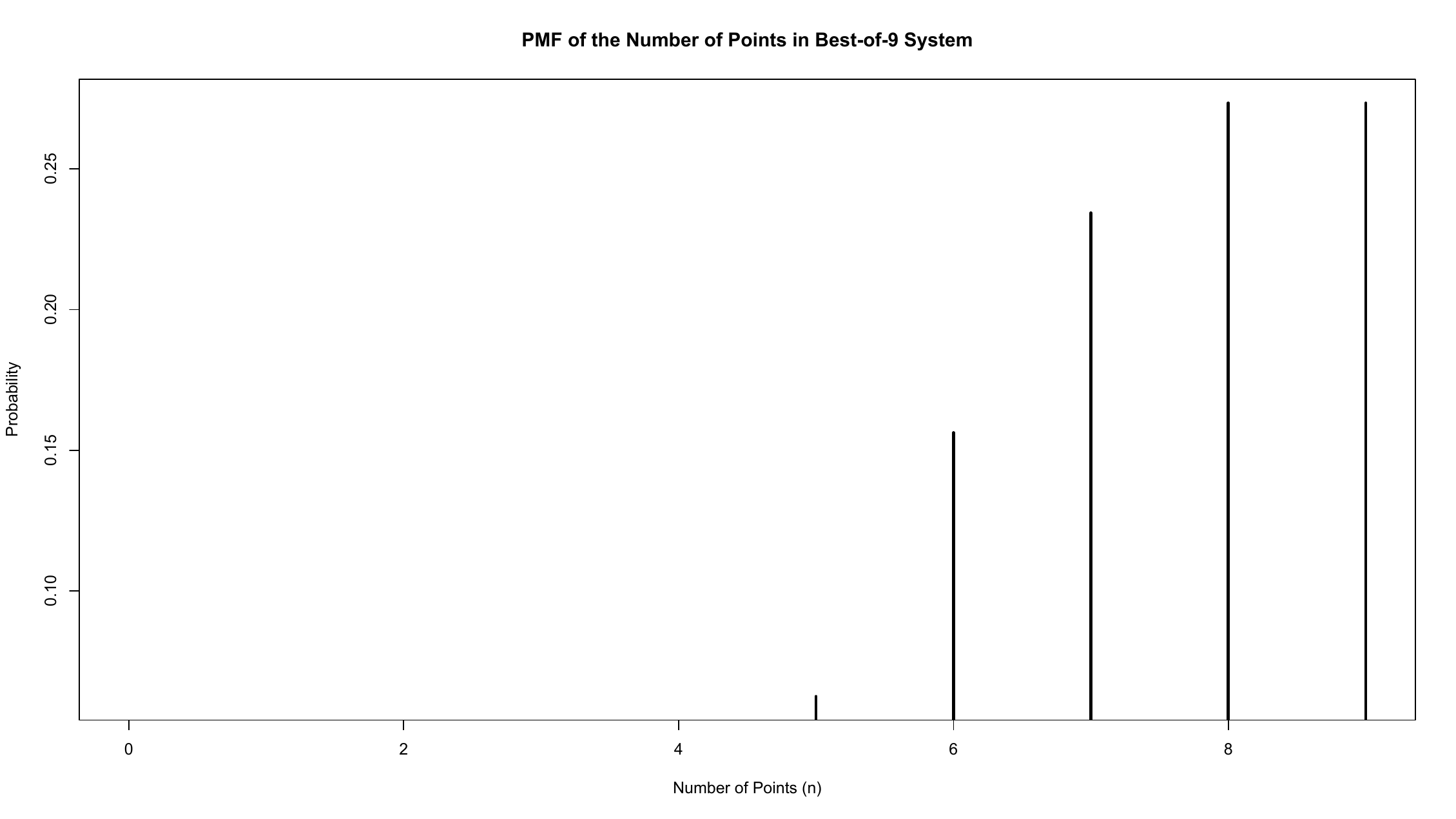}
\caption{Probability distribution of the Number of Points Needed to end a Game when $p = 0.50$ under the tennis game system (left) and for a best-of-9 system (right).}
\label{fig: PMF Num Points Game}
\end{center}
\end{figure}

In Figure \ref{fig: comparison of expected game length}, we plotted the means and standard deviations of the number of points required to end the game under the tennis Game system, a Best-of-7 system, a Best-of-9 system, and a Best-of-11 system, for different values of $p$. Comparing the Best-of-9 system with the tennis Game system, which are the two systems with comparable probabilistic performances, the former always have higher mean game length compared to the latter system. With respect to the variability, observe that the tennis Game system usually have higher standard deviation than the best-of-$K$ system for $K \in \{7,9,11\}$. This is intuitively expected since in the tennis Game system, the number of points needed to end the game is unbounded, whereas for a best-of-$K$ system, the game's length is bounded above by $K$ points. Of course, it should be noted that the afore-mentioned {\em intuitive expectation} could be misleading, since it does {\em not} imply that unboundedness of the range of a variable means that its variance is larger than that of a bounded variable.

\begin{figure}[h]
    \centering
    \includegraphics[width=.3\textwidth,height=.3\textwidth]{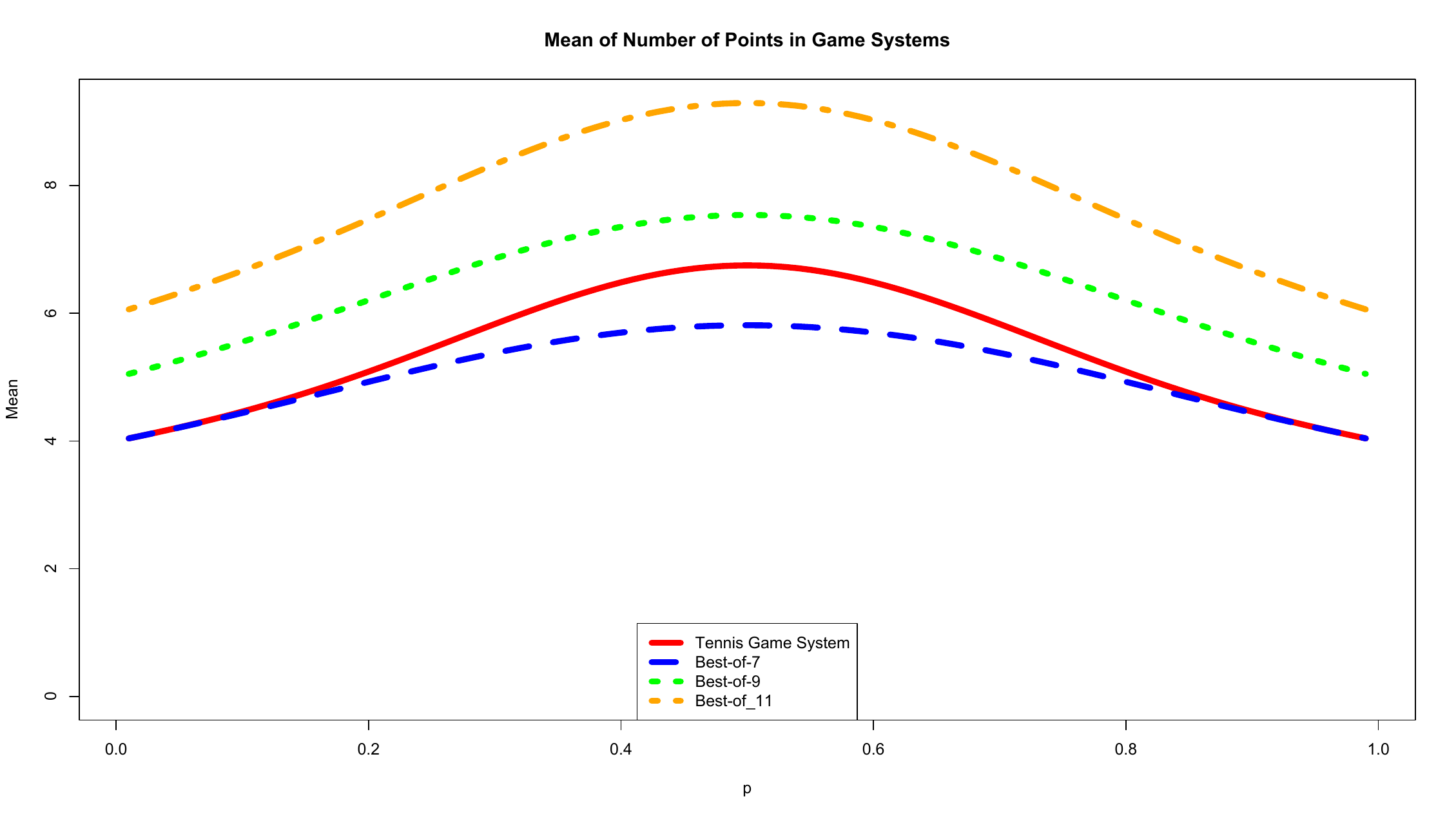} 
        \includegraphics[width=.3\textwidth,height=.3\textwidth]{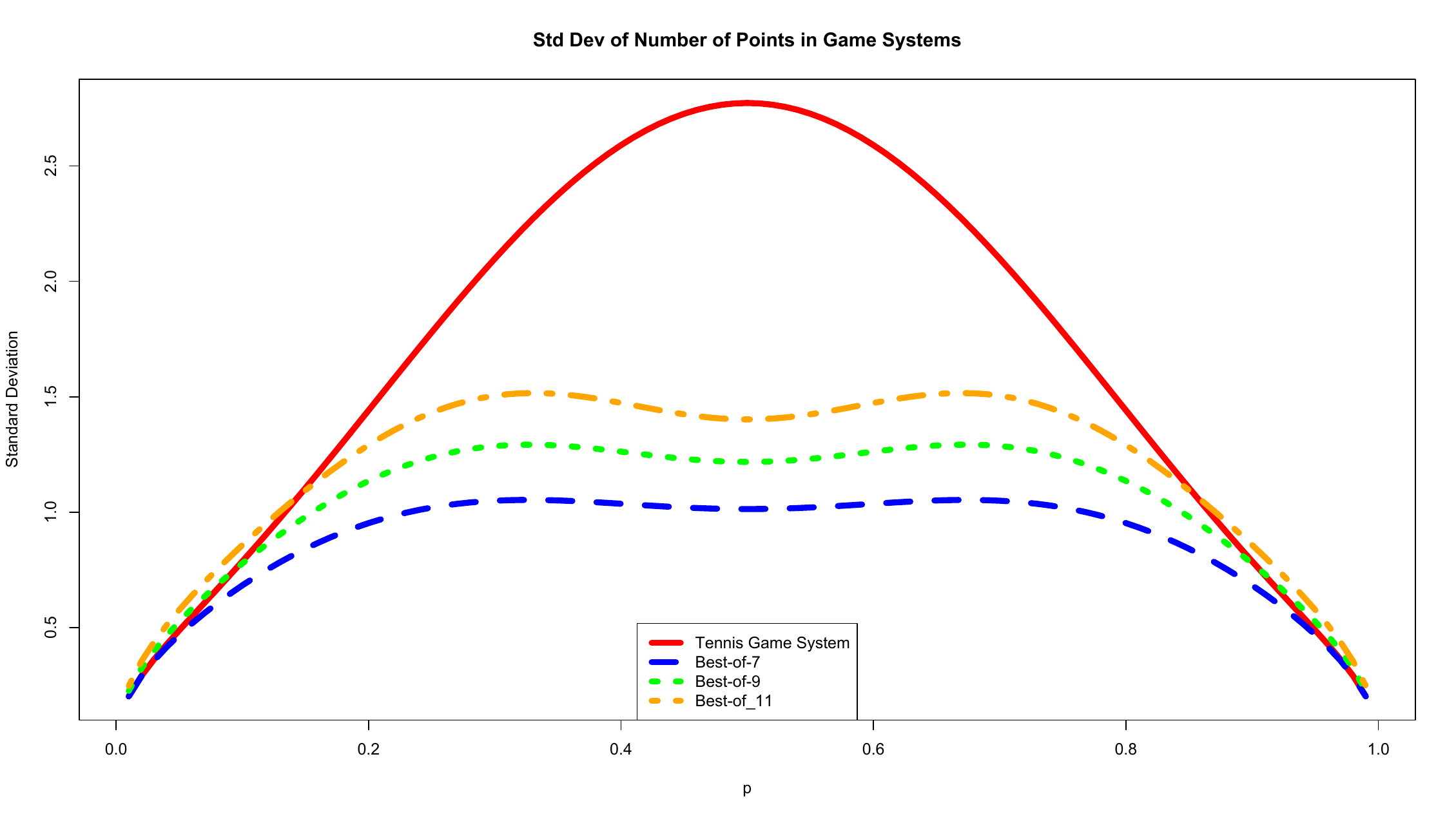}
    \caption{Plot of the comparison of the means (top) and standard deviations (bottom) of the number of points played in a Game for the the tennis game system (RED) and the best-of-7 (BLUE), best-of-9 (GREEN), and best-of-11 (ORANGE) systems.}
    \label{fig: comparison of expected game length}
\end{figure}

\subsection{Two-Parameter Systems}
\label{subsec: two-parameter systems}

Next, we consider the efficiencies of two-parameter systems, such as the set tie-breaker's tie-breaker (STT), set tie-breaker (ST), set (S), and match (M) systems, which now involve the two parameters: $p_A$ and $p_B$. Thus, we now have a mapping: $(p_A,p_B) \mapsto \theta_{SYS}(p_A,p_B)$. How do we measure the efficiency of such a system with respect to a joint distribution $\Pi(p_A,p_B)$ over the region $[0,1] \times [0,1]$? Recall that $\theta_{SYS}(p_A,p_B)$ is the probability that player $A$ wins the STT, ST, S, or M, depending on which stage in the tennis match we are considering. 
We extend our definition of efficiency for  one-parameter systems to two-parameter systems. Given a $(p_A,p_B)$, we had said that the ideal or oracle system should have $\theta_{OR}(p_A,p_B) = 0, 1/2, 1$ depending on whether $p_A < p_B, p_A = p_B, p_A > p_B$, respectively. As such, we could measure the efficiency of a system SYS by comparing it with the oracle system.

\begin{definition}
    Let $\mbox{SYS}$ be a two-parameter system such that $\theta_{SYS}(p_A,p_B)$ is the probability that player $A$ ($p_B$) with probability $p_A$ ($p_B$) of winning his/her service point will win the `match' according to this system. The efficiency of this system with respect to a joint probability distribution $\Pi(\cdot,\cdot)$ over $[0,1] \times [0,1]$ is
    \begin{eqnarray*}
    \mbox{Eff}(\mbox{SYS};\Pi) 
    & = & \int_{\{p_A < p_B\}} [1 - 2\theta_{SYS}(p_A,p_B)] \Pi(dp_A,dp_B) + \\ && 
    \int_{\{p_A > p_B\}} [2\theta_{SYS}(p_A,p_B) - 1] \Pi(dp_A,dp_B).
     \end{eqnarray*} 
    %
\end{definition}

For purposes of comparing these tennis systems to a non-tennis system we consider a Best-of-$K$ system, with $K=2L+1$, where the first player to win $(L+1)$ games wins the match. The players will be assumed to alternate serving the games, and each game will have the usual game tie-breaker. Three systems will be considered when a tied-score after $2L$ games ensues.

For the first system (Bof$K$1 of Bof$K$:SG, with `SG' indicating `sudden game'), {\em one} game will be played, and who serves this deciding game will be determined at random, say, via a coin flip.
For the second system (Bof$K$2 or Bof$K$:STTG), an STT-type tie-breaker will ensue, except that {\em games} are played instead of {\em points}, and the player to first get an advantage of two games wins the match. 
For the third system (Bof$K$3 or Bof$K$:STTP), an STT-type tie-breaker will ensue, except that {\em points} are played instead of {\em games}, and the player to first get an advantage of two points wins the match. 

The respective probabilities of player $A$, the first server, of winning the match, under these three systems are as follows:

\begin{eqnarray*}
\lefteqn{ \theta_{BofK1}(p_A,p_B)  =  
 \Pr\{B(L,\theta_G(p_A)) + B(L,1-\theta_G(p_B)) \ge L+1\} + } \\ &&
 \Pr\{B(L,\theta_G(p_A)) + B(L,1-\theta_G(p_B)) = L\}
 \frac{1}{2} \left[\theta_G(p_A) + (1-\theta_G(p_B))\right];
\end{eqnarray*}
\begin{eqnarray*}
 \lefteqn{ \theta_{BofK2}(p_A,p_B)  =  
 \Pr\{B(L,\theta_G(p_A)) + B(L,1-\theta_G(p_B)) \ge L+1\} + }\\ &&
 \Pr\{B(L,\theta_G(p_A)) + B(L,1-\theta_G(p_B)) = L\}
 \theta_{STT}(\theta_G(p_A),\theta_G(p_B)).
\end{eqnarray*}
\begin{eqnarray*}
\lefteqn{ \theta_{BofK3}(p_A,p_B)  =  
 \Pr\{B(L,\theta_G(p_A)) + B(L,1-\theta_G(p_B)) \ge L+1\} + } \\ &&
 \Pr\{B(L,\theta_G(p_A)) + B(L,1-\theta_G(p_B)) = L\}
 \theta_{STT}(p_A,p_B).
\end{eqnarray*}

For a joint distribution on $(p_A,p_B)$ specified by independent beta distributions each with parameter $(2,1)$, Figure \ref{fig: prob wrt pB of systems} presents the plots of the probabilities of player A winning when $p_A=.6$ (top panel) and $p_A=.8$ (bottom panel), and as $p_B$ varies, when $L = 15$, that is, for Best-of-31 systems. Observe that when $p_A = .80$, the probability under Best-of-$(2L+1)$:1 as $p_B$ approaches 1 does not converge to 0. This is so since when $p_A$ and $p_B$ are both high, then there is a non-ignorable probability that the tie-breaker will ensue. However, in the first system, {\em only} one game is played, with the server chosen randomly, and so this single game becomes most critical. As such, this Best-of-$(2L+1):SG$ system is not a good system, especially when the two players are excellent servers.

\begin{figure}[h]
\begin{center}
\includegraphics[width=.3\textwidth,height=.3\textwidth]{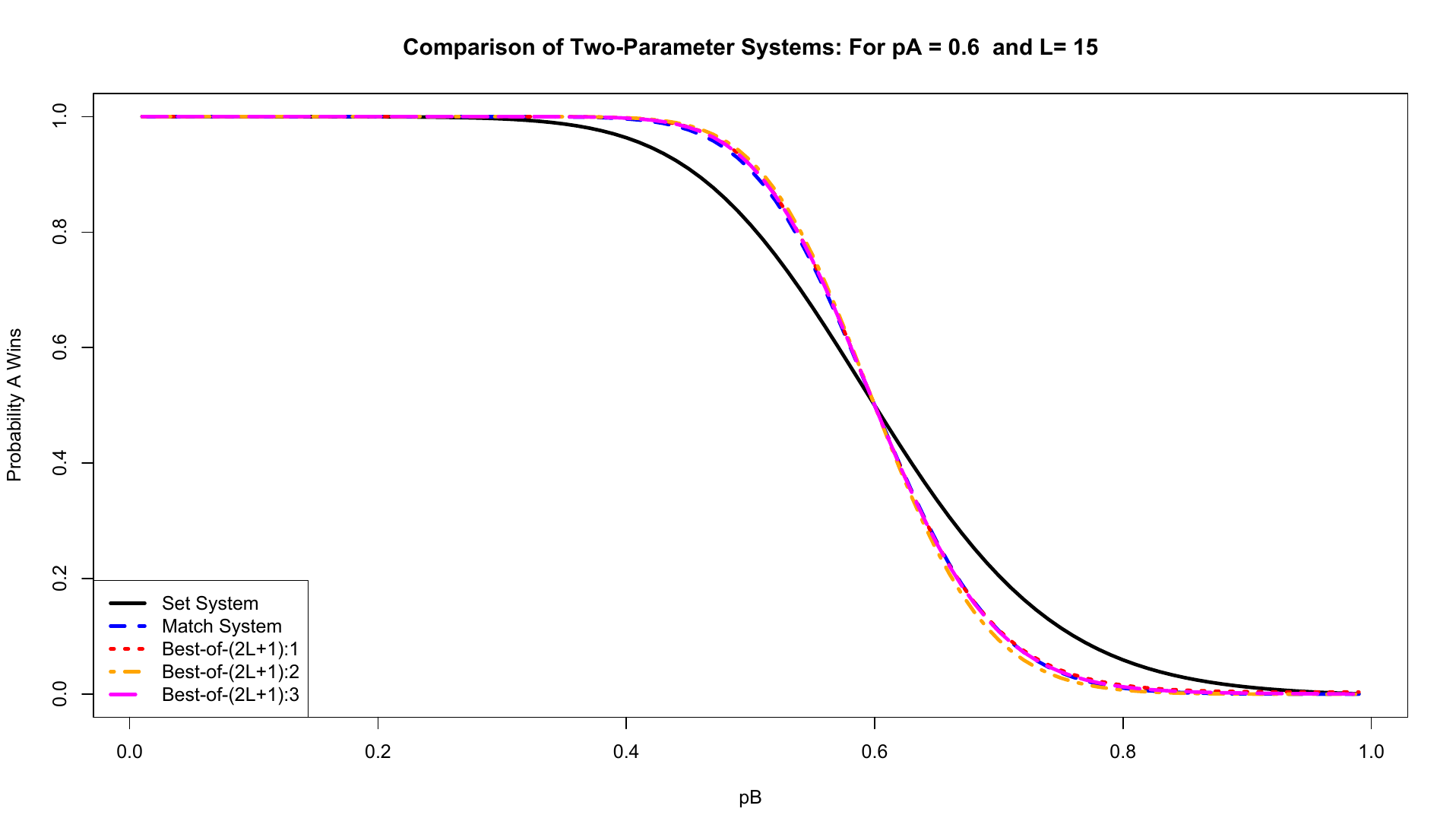} 
\includegraphics[width=.3\textwidth,height=.3\textwidth]{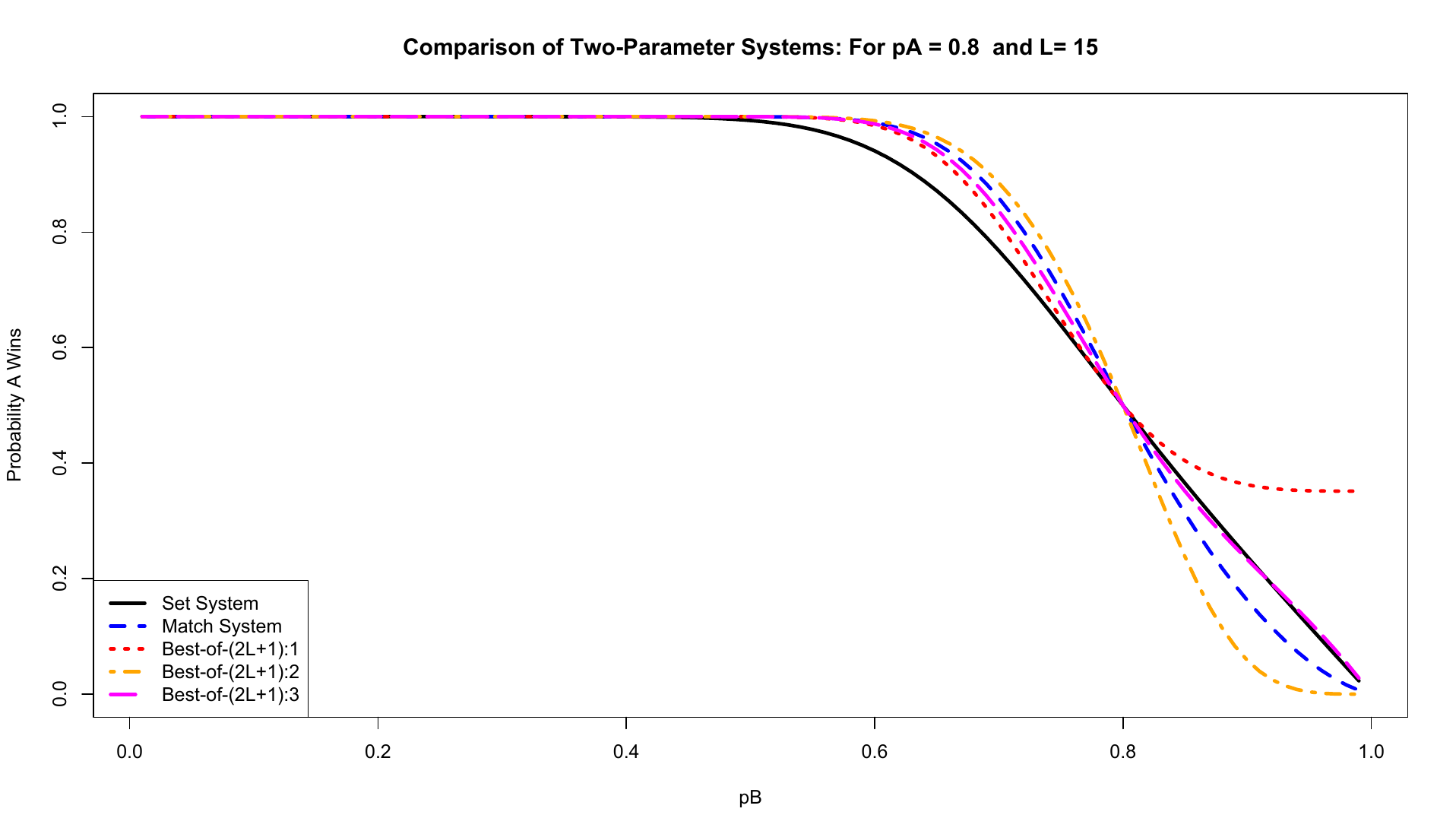}
\caption{Plots of the probability of player $A$ winning the match under the Tennis Best-of-Five-Sets System and the Best-of-$(2L+1)$-Games System under three type of tie-breakers.}
\label{fig: prob wrt pB of systems}
\end{center}
\end{figure}

Figure \ref{fig: efficiency two-parameter systems} provides the efficiency plots for these Best-of-$(2L+1)$ systems, together with that for the tennis Set and Match systems, as a function of $L$. This is for two joint distributions $\Pi$ on $(p_A,p_B)$, which are products of two beta distributions with parameters $(\alpha_1=2,\beta_1=1,\alpha_2=2,\beta_2=1)$ and $(\alpha_1=3,\beta_1=2,\alpha_2=2.5,\beta_2=2)$. Observe that among the Best-of-$(2L+1)$ systems, the second system, where the tie-breaker is the first two be ahead by two games, has the best efficiency. In fact, when even with just $L=5$, so a Best-Of-11-Games system, when  $(\alpha_1=2,\beta_1=1,\alpha_2=2,\beta_2=1)$, the second system is already better than the tennis Match system. Of course, this system may also take a long time to finish, especially if the two players are excellent servers. The first system is demonstrably worse, and it will require $L=54$, when $(\alpha_1=2,\beta_1=1,\alpha_2=2,\beta_2=1)$, so a Best-of-109-Games, to obtain the same performance as the tennis Match system. The third system, where the tie-breaker is the first two have an advantage of two {\em points}, will require $L=22$, so a Best-of-45-Games, to have an almost equivalent performance with the Match system. This will have a shorter duration compared to the second system. One may ask what the impact of the parameters for the joint distribution on $(p_A,p_B)$ and why the efficiencies are quite different. For the parameter set $(\alpha_1=2,\beta_1=1,\alpha_2=2,\beta_2=1)$, there is a high chance of getting extremely high values of both $p_A$ and $p_B$, and this impacts drastically the performance of Best-of-$(2L+1)$:SG since the tie-breaker stage will usually happen. This will not be the case when $(\alpha_1=3,\beta_1=2,\alpha_2=2.5,\beta_2=2)$,
since extremely high-values of $p_A$ and $p_B$ will be less likely, hence the Best-of-$(2L+1)$-Games match will not often reach the tie-breaker stage.

\begin{figure}[h]
\begin{center}
\includegraphics[width=.3\textwidth,height=.3\textwidth]{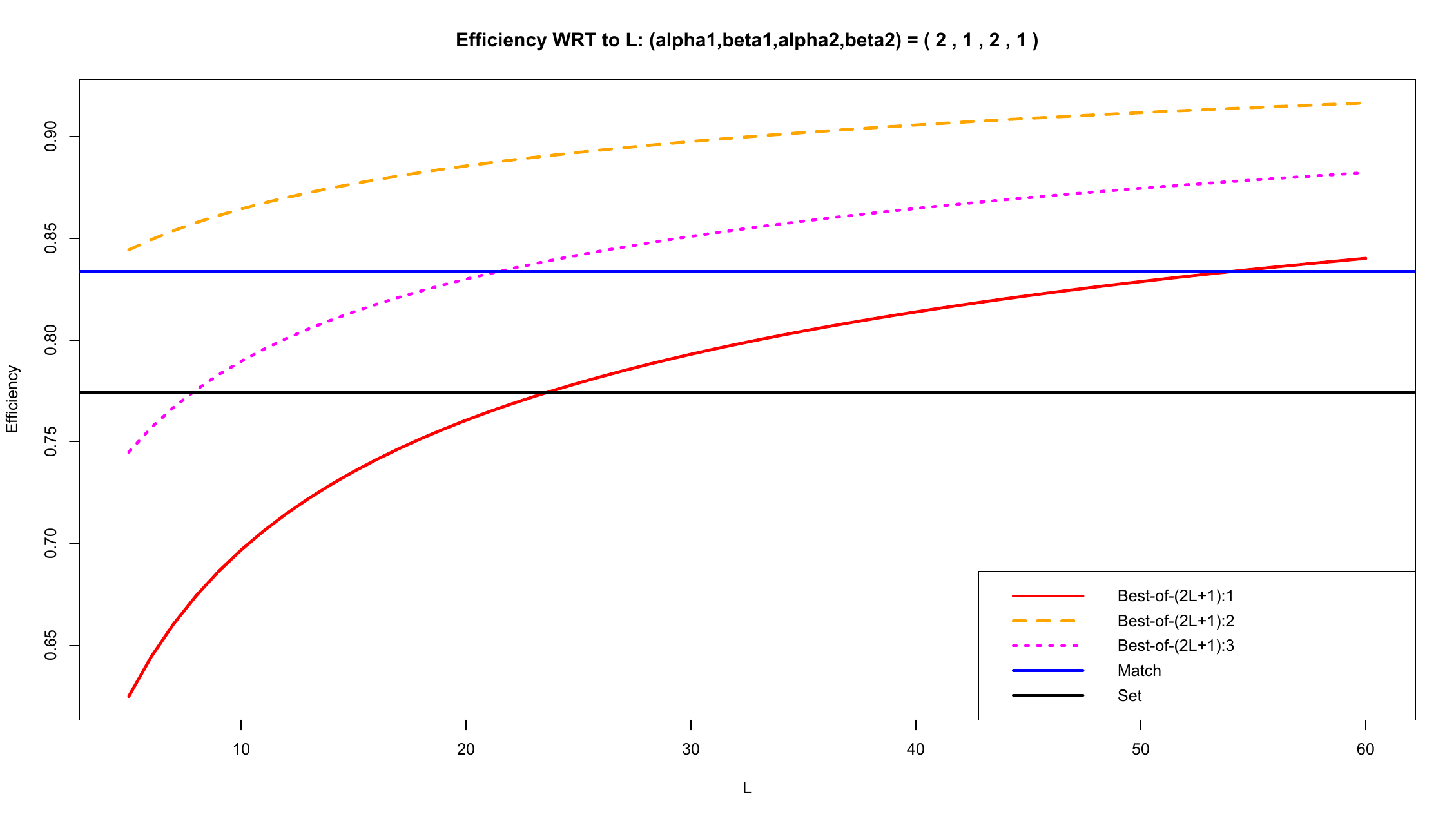} 
\includegraphics[width=.3\textwidth,height=.3\textwidth]{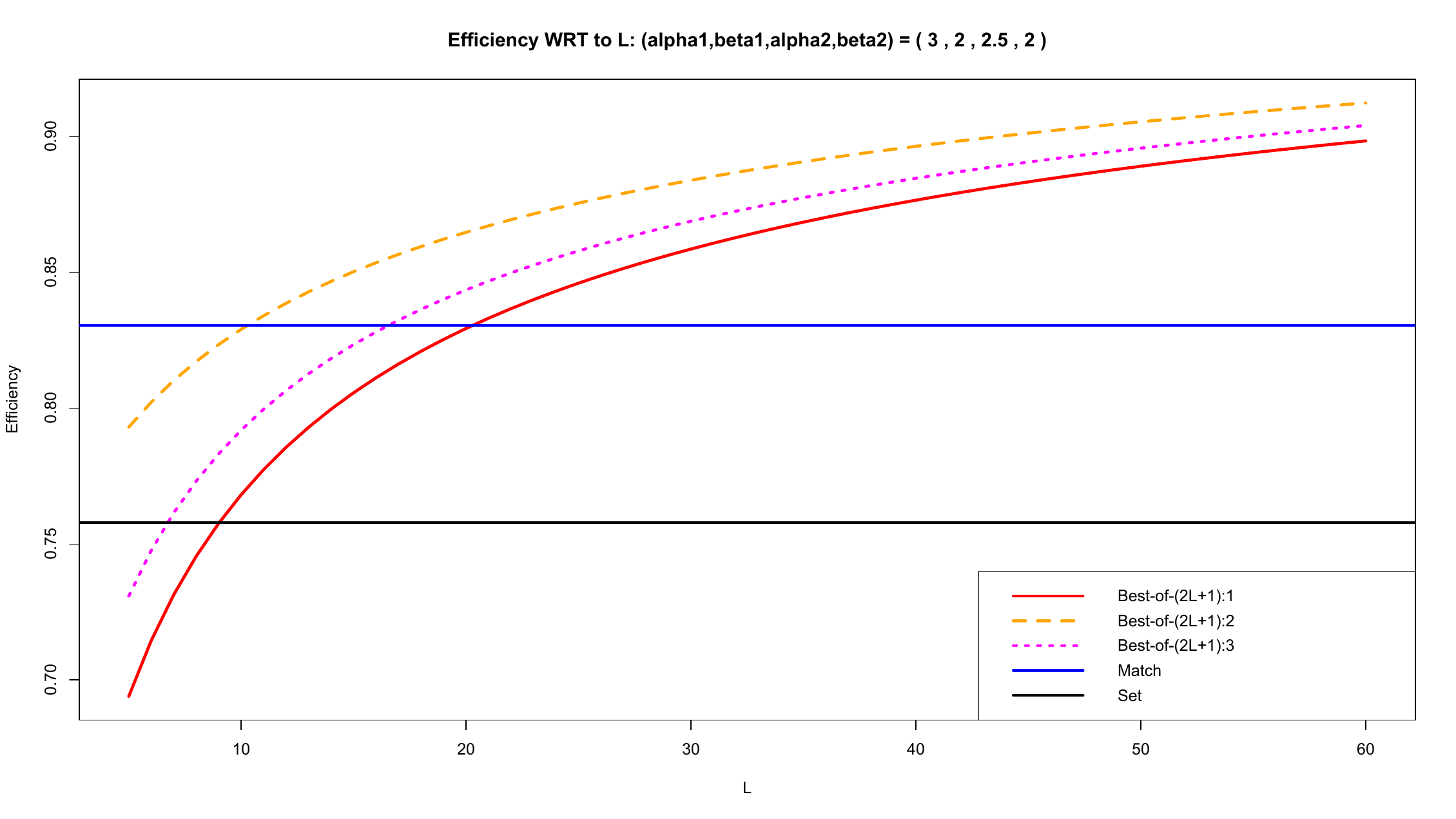} 
\caption{Efficiency plots for Two-Parameter Systems and two sets of parameters for the joint distribution of $(p_A,p_B)$.}
\label{fig: efficiency two-parameter systems}
\end{center}
\end{figure}

Table \ref{tab: efficiencies of two parameter systems} provides a summary of the efficiencies of different systems and for chosen $L$-values for the Best-of-$(2L+1)$-Games systems. It seems that the most comparable system to the Best-Of-Five-Sets tennis Match system is the Best-of-45-Games system, with a tie-breaker where the winner is the first to have an advantage of 2 points and with the service either alternating between the two players, or using the sequence ABBAABBAA.... Since the performance of these different systems depends on the specific value of $(p_A,p_B)$, the efficiency measure is therefore affected as well by the choice of $\Pi(\cdot,\cdot)$, which determines the distribution of the $(p_A,P_B)$. For example, if $\Pi = Beta(3,2) \otimes Beta(2.5,2)$, the Best-of-$(2L+1)$ systems that are comparable to the Best-of-5-Sets Match system are: $L = 21$ for Bof$K$:1; $L=11$ for Bof$K$:2; and $L=17$ for Bof$K$:3.

\begin{table}[h]
\begin{center}
\begin{tabular}{|l|c|c|} \hline\hline
Two-Parameter System & \multicolumn{2}{c|}{Efficiency} \\ \cline{2-3}
& $Be(2,1) \otimes Be(2,1)$ & $Be(3,2) \otimes Be(2.5,2)$ \\ \hline\hline
Set Tie-Breaker's Tie-Breaker  & 0.6134 & 0.5353  \\ \hline
Set Tie-Breaker ($K=7$) & 0.6666 & 0.5928 \\ 
Set Tie-Breaker ($K=8$) & 0.5509 &  0.5151 \\ 
Set Tie-Breaker ($K=9$) & 0.6934 & 0.6257 \\ 
Set Tie-Breaker ($K=10$) & {0.6056} & 0.5668 \\ \hline
Set ($K=7$) & 0.7741 & 0.7578 \\
Set ($K=8$) & 0.7096 & 0.7402 \\
Set ($K = 9$) & 0.7814 & 0.7627 \\
Set ($K =10$) & 0.7297 & 0.7485 \\ \hline
Match ($K_0=7,K_1=7,Q=2$) & 0.8444 & 0.8329 \\
Match ($K_0=7,K_1=10,Q=2$) & 0.8338 & 0.8304 \\ \hline
Best-Of-$K1$ with $L = 5$ & 0.6249 & 0.6939 \\ 
Best-Of-$K1$ with $L = 15$ & 0.7353 & 0.8056 \\
Best-Of-$K1$ with $L = 22$ & 0.7685 & 0.8366 \\
Best-Of-$K1$ with $L = 29$ & 0.7904 & 0.8562 \\ \hline
Best-Of-$K2$ with $L = 5$ & 0.8443 & 0.7930 \\
Best-Of-$K2$ with $L = 15$ & 0.8768 & 0.8502 \\
Best-Of-$K2$ with $L = 22$ & 0.8884 & 0.8693 \\
Best-Of-$K2$ with $L = 29$ & 0.9002 & 0.8823 \\ \hline
Best-Of-$K3$ with $L = 5$ & 0.7449& 0.7308 \\
Best-Of-$K3$ with $L = 15$ & 0.8271 & 0.8233 \\
Best-Of-$K3$ with $L = 22$ & 0.8350 & 0.8497 \\
Best-Of-$K3$ with $L = 29$ & 0.8556 & 0.8667 \\
\hline\hline
\end{tabular}
\caption{Efficiencies of the different two-parameter systems for a joint distribution on $(p_A,p_B)$ given by $\Pi(p_A,p_B) = Beta(2,1) \otimes Beta(2,1).$}
\label{tab: efficiencies of two parameter systems}
\end{center}
\end{table}

From this table, it is also interesting to observe the non-monotone effect of changing the value of $K$ in the ST system on the system's efficiency as the value of $K$ changes from odd to even to odd. From $K=7$ to $K=8$, the efficiency decreased, while from $K=8$ to $K=9$ it increased, then it decreased again from $K=9$ to $K=10$. This consequently impacts also the efficiencies of the set and match systems. A possible reason for this non-monotone behavior could be that when $K$ is odd, player $B$ has one more service point; whereas, when $K$ is even, both have the same number of service games. Figure \ref{fig: effect of K on ST} plots the probability of player $A$ winning the set tie-breaker as $K$ changes and for two sets of values of $(p_A,p_B)$: $(.60,.55)$ and $(.55, .60)$. Note that whether it decreases or increases from successive (odd to even) values of $K$ depends on which player has a better service point. This issue is not unimportant since recall that in the final set of major tennis tournaments (US Open, Wimbledon, Australian Open, French Open), the value of $K$ becomes 10 on the fifth set, whereas in the first four sets, the value of $K$ is 7. So, there is an impact on the choice of the value of $K$ to use in the set tie-breaker.

\begin{figure}[h]
\begin{center}
\includegraphics[width=.3\textwidth,height=.3\textwidth]{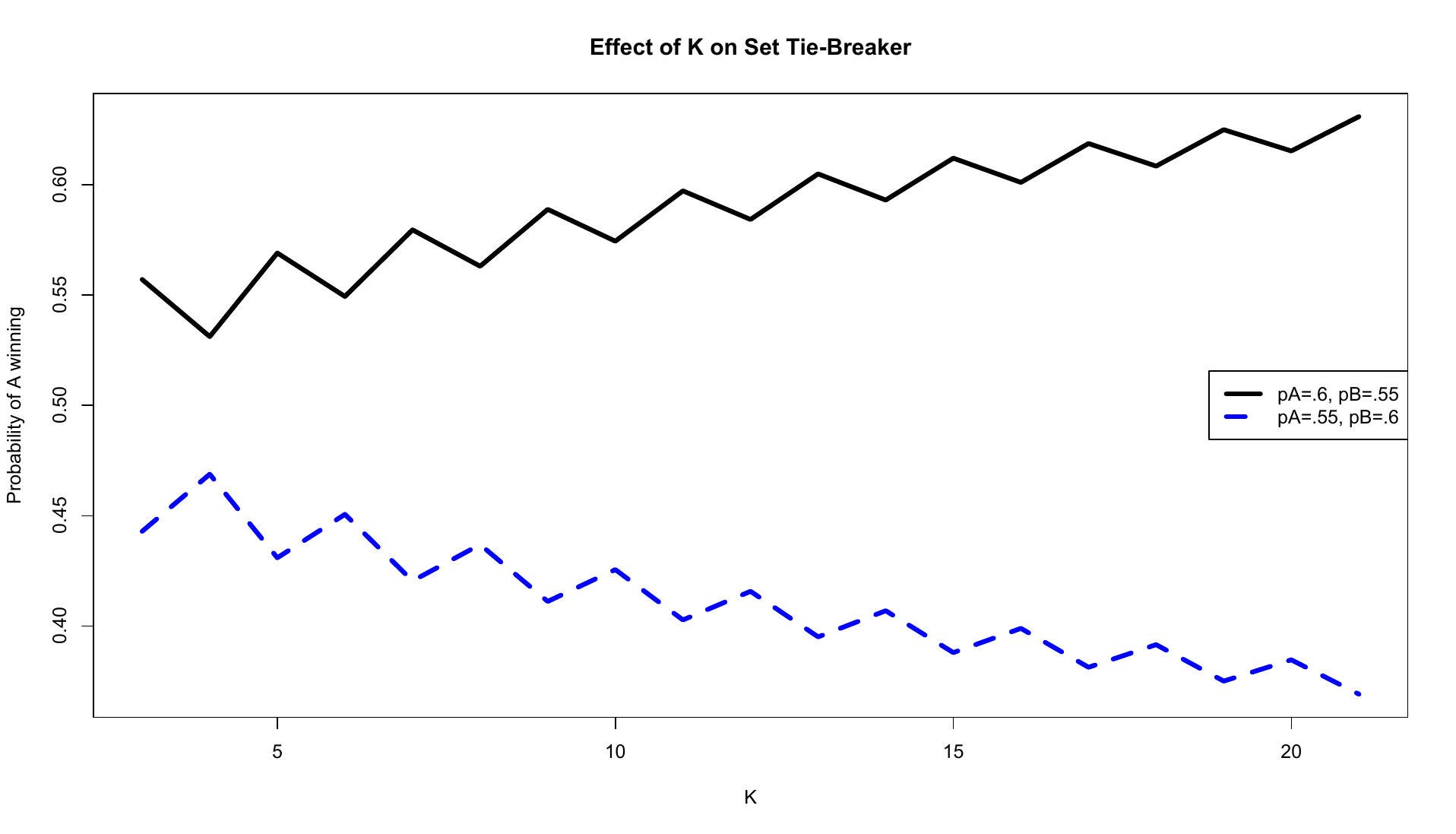}
\caption{Effect of $K$ in the Set Tie-Breaker. Plotted is the probability of player $A$, the first server, winning the tie-breaker.}
\label{fig: effect of K on ST}
\end{center}
\end{figure}

\subsection{Number of Points in Best-of-$(2L+1)$-Games Systems}

In comparing different match systems, it is also important to compare the number of points played to end the match. We have already obtained the mean and variance for the number of points in a Best-of-$(2Q+1)$-Sets, the tennis Match system. We consider in this subsection the mean and variance of the number of points played in the three Best-of-$(2L+1)$-Games systems. We shall assume that player $A$ serves first, then they alternate serving the games.

Let $G_A$ ($G_B$) be the number of games won by player $A$ ($B$) in these systems. Then, we have the decompositions,
\begin{eqnarray*}
 \{\mbox{$A$\ wins}\} & = & \left[\bigcup_{b=0}^L\{G_A=L+1,G_B=b\}\right] \cup  \left[\bigcup_{a=0}^L \{G_A=a,G_B=L+1\}\right] \cup \\ && \left[\{G_A=L,G_B=L\} \cap \{\mbox{$A$ wins tie-breaker}\}\right].
 \end{eqnarray*}
 and
 \begin{eqnarray*}
 N & = & \left[\sum_{b=0}^L I\{G_A=L+1,G_B=b\} N(L+1,b)\right] + \\ && \left[\sum_{a=0}^L I\{G_A=a,G_B=L+1\} N(a,L+1)\right] + \\ && I\{G_A=L,G_B=L\} (N(L,L) + N_{TB}),
 \end{eqnarray*}
 where $N$ is the total number of points played in the match; $N(a,b)$ is the number of points played if $G_A=a,G_B=b$; and $N_{TB}$ is the number of points played in the tie-breaker, which will depend on the tie-breaker system. With $t_A(n)$ ($t_B(n)$) being the number of games served by $A$ ($B$) during the first $n$ games, and with $J_A(n)$ indicating if $A$ is serving the $n$th game, then the JPMF of $(G_A,G_B)$, denoted by $p_{(G_A,G_B)}(a,b;p_A,p_B,L)$, is given by
 \begin{eqnarray*}
  \lefteqn{ p_{(G_A,G_B)}(a,b;p_A,p_B,L) \equiv p_{(G_A,G_B)}(a,b) \equiv p(a,b) = } \\ &&
  \left\{
  \begin{array}{l}
  \Pr\{B(t_A(a+b),\theta_G(p_A)) + B(t_B(a+b),1-\theta_G(p_B)) = a\} \times \\ I\{a \le L, b \le L\} \\
   \Pr\{B(t_A(L+b),\theta_G(p_A)) + B(t_B(L+b),1-\theta_G(p_B)) = L\} \times \\  \theta_G(p_A)^{J_A(L+b+1)} (1-\theta_G(p_B))^{1-J_A(L+b+1)}I\{a = L+1, b < L\} \\
      \Pr\{B(t_A(L+a),1-\theta_G(p_A)) + B(t_B(L+a),\theta_G(p_B)) = L\} \times \\  (1-\theta_G(p_A))^{J_A(L+b+1)} \theta_G(p_B)^{1-J_A(L+b+1)}I\{a < L, b = L+1\}
  \end{array}
  \right..
 \end{eqnarray*}
Since
$$N(a,b) = \sum_{i=1}^{t_A(a+b)} N_{GAi} + \sum_{j=1}^{t_B(a+b)} N_{GBj}$$
where $N_{GAi}$ ($N_{GBj}$) is the number of points played in the $i$th ($j$th) $A$ ($B$) game, and these random variables are independent, then
$$\mu(a,b) = E[N(a,b)] = t_A(a+b) \mu_G(p_A) + t_B(a+b) \mu_G(p_B);$$
$$\sigma^2(a,b) = Var[N(a,b)] = t_A(a+b) \sigma_G^2(p_A) + t_B(a+b) \sigma_G^2(p_B).$$

For the three tie-breaker systems, we have the following probabilities, means, and variances.

\begin{center}
\begin{tabular}{c} \hline\hline
{Tie-Breaker System 1 (``SG'')} \\ \hline
$ \theta_{TB1} = \Pr\{\mbox{$A$\ wins TB1}\}  =  \frac{1}{2}[\theta_G(p_A) + (1-\theta_G(p_B)] $ \\
$ \mu_{TB1} = E[N_{TB1}]  =  \frac{1}{2}[\mu_G(p_A) + (1-\mu_G(p_B)] $ \\
$ \sigma_{TB1}^2 = Var[N_{TB1}]  =  \frac{1}{4}[\sigma_G^2(p_A) + (1-\sigma_G^2(p_B))] $ \\ \hline\hline
{Tie-Breaker System 2 (``STTG'')} \\ \hline
$ \theta_{TB2} =  \Pr\{\mbox{$A$\ wins TB2}\}  =  \theta_{STT} (\theta_G(p_A),\theta_G(p_B)) $ \\
$ \mu_{TB2} = E[N_{TB2}]  =  \mu_{STT}(\theta_G(p_A),\theta_G(p_B)) $ \\
$ \sigma_{TB2}^2 = Var[N_{TB2}]  =  \sigma_{STT}^2(\theta_G(p_A),\theta_G(p_B)) $ \\ \hline\hline
{Tie-Breaker System 3 (``STTP'')} \\ \hline
$ \theta_{TB3} =  \Pr\{\mbox{$A$\ wins TB3}\}  =  \theta_{STT}(p_A,p_B) $ \\
$ \mu_{TB3} = E[N_{TB3}]  =  \mu_{STT}(p_A,p_B) $ \\
$ \sigma_{TB3}^2 = Var[N_{TB3}]  =  \sigma_{STT}^2(p_A,p_B) $ \\ \hline\hline
\end{tabular}
\end{center}

Using the iterated rules for the mean and variance, with the conditioning on $(G_A,G_B)$, we obtain the probability of $A$ winning, and the mean and variance of $N$ as follows:

\begin{eqnarray*}
    \theta(p_A,p_B,L) & = & \Pr\{\mbox{$A$\ wins}\} = \sum_{b=0}^L p(L+1,b) + \sum_{a=0}^L p(a,L+1)  + p(L,L) \theta_{TB}; \\
    \mu(p_A,p_B,L) & = & E\{N|(p_A,p_B,L)\} \\
    & = & \sum_{b=0}^L p(L+1,b) \mu(L+1,b) + \sum_{a=0}^L p(a,L+1) \mu(a,L+1) + \\ &&  p(L,L) [\mu(L,L) + \mu_{TB}]; \\
\sigma^2(p_A,p_B,L) &  = & Var\{N|(p_A,p_B,L)\} = E[Var(N|(G_A,G_B))] + Var[E(N|(G_A,G_B)]  \\
    & = & \left\{\sum_{b=0}^L p(L+1,b) \sigma^2(L+1,b) + \sum_{a=0}^L p(a,L+1) \sigma^2(a,L+1) + \right. \\ &&  \left. p(L,L) [\sigma^2(L,L) + \sigma_{TB}^2] \right\} +  \left\{ \left[ \sum_{b=0}^L p(L+1,b) \mu(L+1,b)^2 + \right.\right. \\ && \left.\left.\sum_{a=0}^L p(a,L+1) \mu(a,L+1)^2  +   p(L,L) [\mu(L,L) + \mu_{TB}]^2 - \right.\right. \\ && \left.\left. 
    \mu(p_A,p_B,L)^2 \right]\right\}; \\
     \sigma(p_A,p_B,L)  & = & StdDev(N) = +\sqrt{\sigma^2(p_A,p_B,L)}.
\end{eqnarray*}
Note that $\theta(p_A,p_B,L)$ is an alternative formula for obtaining the probability that $A$ wins, and it yields the same value as the $\theta_{BofK}(p_A,p_B,L)$ formulas earlier in this subsection.

With these expressions for the mean and variance of the number of points played in Best-of-$(2L+1)$-Games systems, we could refine our comparisons with the tennis Best-of-$(2Q+1)$-Sets system. However, since we have noted that System 2 of the Best-of-$(2L+1)$-Games system appears to be the most suitable system to compare to the tennis Match system, we only compare below the tennis Match system and the Best-of-$(2L+1)$-Games system 2.

Table \ref{tab: comparison of tennis match system and BOKG2 systems} provides comparisons for specific $(p_A,p_B)$-pairs with respect to the probability of player $A$ winning, the mean number of points played, and the standard deviation of the number of points played. Observe that whenever $p_A=p_B$, the probability of $A$ winning is always $1/2$, but the number of points played is very different. When $p_A=p_B$ and close to $0.5$, the Best-of-$(2L+1)$-Games system tends to have a smaller number of points played compared to the tennis Match system. However, when $p_A=p_B$ and close to either $0$ or to $1$, then the Best-of-$(2L+1)$-Games system requires more points to be played. The reason for this is that the match will most likely go into a tie-breaker, but the tie-breaker will then require many points to finish, since the requirement is to be ahead by two games. Neither system appears to dominate the other, but with the Best-of-$(2L+1)$-Games system possibly taking a much longer time (in terms of points played) especially when two players are highly excellent servers ($p_A$ and $p_B$ close to 1) or both are very bad servers ($p_A$ and $p_B$ close to 0), though of course the latter will not be the case for these top-notch tennis tournaments.

\begin{table}[h]
\begin{center}
\begin{tabular}{||l||c|c||c||c|c|c|c||} \hline\hline
Match &\multicolumn{2}{c||}{Player's}& Best-of-5& \multicolumn{4}{c||}{Best-of-$(2L+1)$-Games System 2} \\
\cline{2-3} \cline{5-8}
   Property  &  $p_A$ & $p_B$  & Sets & $L=5$ & $L=15$ & $L=22$ & $L=29$ \\ \hline\hline
PrA wins & 0.5 & 0.5 &  0.5000  &   0.5000   &   0.5000  &    0.5000  &    0.5000 \\
MeanPts  &   0.5 & 0.5 & 271.8082  &  57.0674  &  172.6968  &  257.6948  &  344.1572 \\
StdPts  &    0.5 & 0.5 & 61.7407 &   11.0940  &   30.5886  &   42.9892  &   54.5397 \\ \hline
PrA wins & 0.5 & 0.6 &  0.0488  &   0.1798  &    0.0762  &    0.0443  &    0.0264 \\
MeanPts   &  0.5 & 0.6 & 222.9703  &  55.4457  &  163.7432  &  240.7006  &  317.3717 \\
StdPts   &   0.5 & 0.6 & 53.9818 &   10.9391  &   26.1681   &  33.5899   &  39.4048 \\ \hline
PrA wins & 0.6 & 0.5 &  0.9512 &    0.8202  &    0.9238  &    0.9557   &   0.9736 \\
MeanPts  &   0.6 & 0.5 & 220.5927 &   54.7344  &  162.6257  &  239.4745  &  316.0717 \\
StdPts   &   0.6 & 0.5 & 53.7999  &  10.8440   &  26.0880  &   33.4941  &   39.3289 \\ \hline
PrA wins & 0.8 & 0.6 &  0.9980 &    0.9621 &     0.9929 &     0.9979  &    0.9994 \\
MeanPts  &   0.8 & 0.6 & 177.4933 &   48.5557  &  143.7915 &   210.1740  &  275.8662 \\
StdPts    &  0.8 & 0.6 & 36.0239  &   8.3992  &   18.3489  &   22.2927   &  25.4925 \\ \hline
PrA wins & 0.9 & 0.8 &   0.8978  &   0.9391  &    0.9412  &    0.9435  &    0.9462 \\
MeanPts &    0.9 & 0.8 & 253.8929 &  111.1076  &  158.5151 &   201.8926  &  250.9717 \\
StdPts  &    0.9 & 0.8 &  57.2131  &  83.9818  &   73.2925  &   66.8277   &  64.5961 \\ \hline
PrA wins & 0.9 & 0.9  & 0.5000 &    0.5000   &   0.5000  &    0.5000  &    0.5000 \\
MeanPts  &   0.9 & 0.9 & 287.4960  & 713.1771  &  740.7696 &   761.9790  &  784.6745 \\
StdPts  &    0.9 & 0.9 & 59.4101  & 690.4855 &   687.9704 &   684.8080  &  680.6947 \\ \hline\hline
\end{tabular}
\caption{Comparison of the probabilities of player $A$ winning, the mean and standard deviation of the number of points of the tennis match system (Best-of-5-Sets) versus Best-of-$(2L+1)$-Games System with an STT-Type tie-breaker of games (system 2) for different pairs of values of $(p_A,p_B)$. {\bf Legend:} `PrA wins' is the probability that player $A$ wins the match; `MeanPts' (`StdPts') is the mean (standard deviation) of the number of points played in the match.}
\label{tab: comparison of tennis match system and BOKG2 systems}
\end{center}
\end{table}

Another way of comparing these systems is by looking at the difference or ratio in the probabilities of player $A$ winning the match; the ratio of the mean of the number of points played in each system; and the ratio of the standard deviation of the number of points played in each system. To do so, we present filled contour plots (obtained by using the {\tt filled.contour} object in {\tt R} \cite{R}) over the region $(p_A,p_B) \in (0,1) \times (0,1)$. We only present the plots associated with the case $L=22$, which appears to be the value that makes the tennis Match system and the Best-of-$(2L+1)$-Games system 2 comparable from the efficiency comparisons and from Table \ref{tab: comparison of tennis match system and BOKG2 systems}. Figure \ref{fig: filled contour plots of match vs BOKG - probabilities} provides the contour plots for the difference and for the logarithm of the ratio of the probabilities of player $A$ winning the match under the Best-of-5-Sets tennis match system and the Best-of-$(2L+1)$-Games system 2, while Figure \ref{fig: filled contour plots of match vs BOKG - points} provides the contour plots of the ratio of the means and standard deviations for these match systems. 

\begin{figure}[h]
\begin{center}
\includegraphics[width=.3\textwidth,height=.3\textwidth]{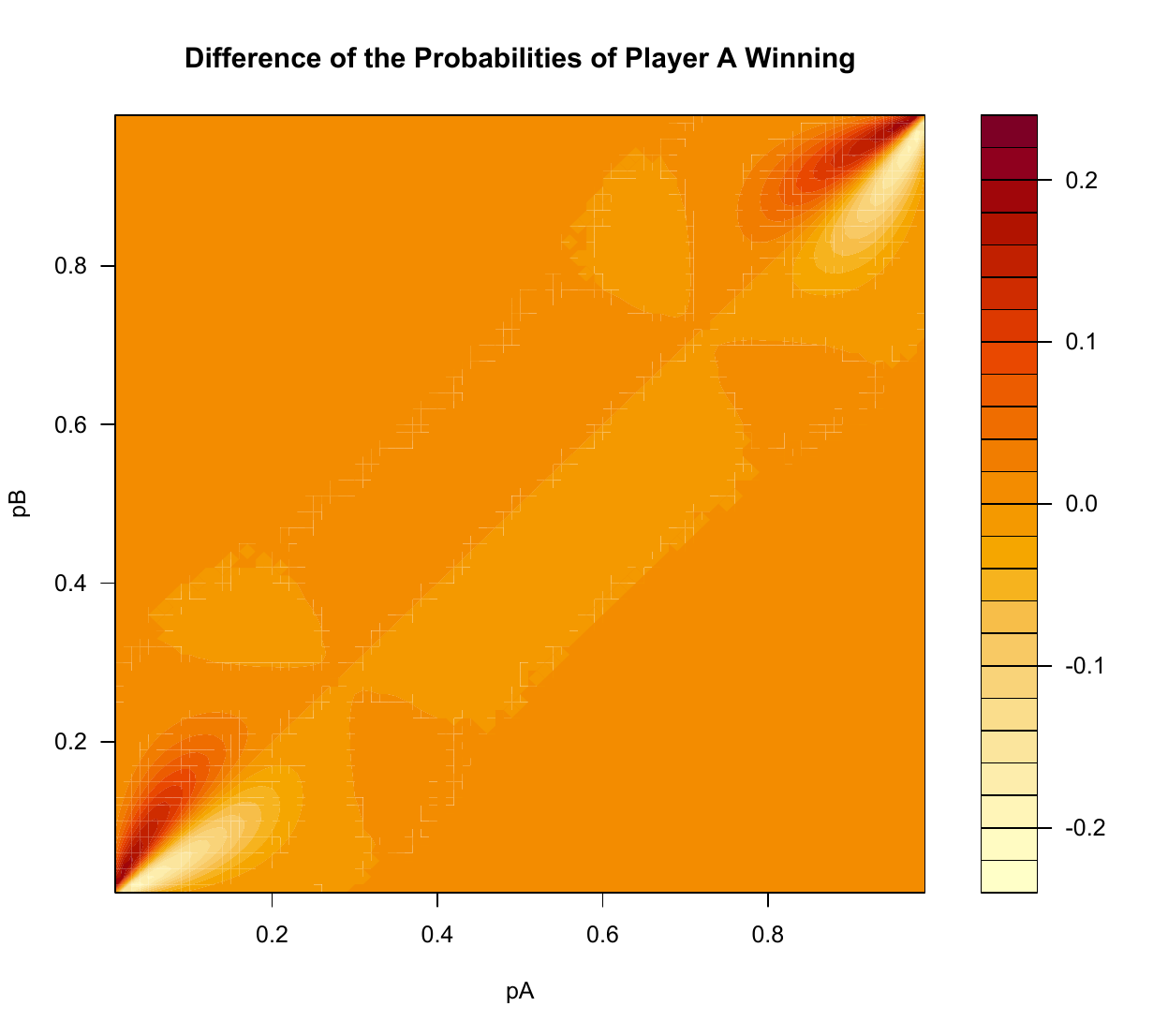} 
\includegraphics[width=.3\textwidth,height=.3\textwidth]{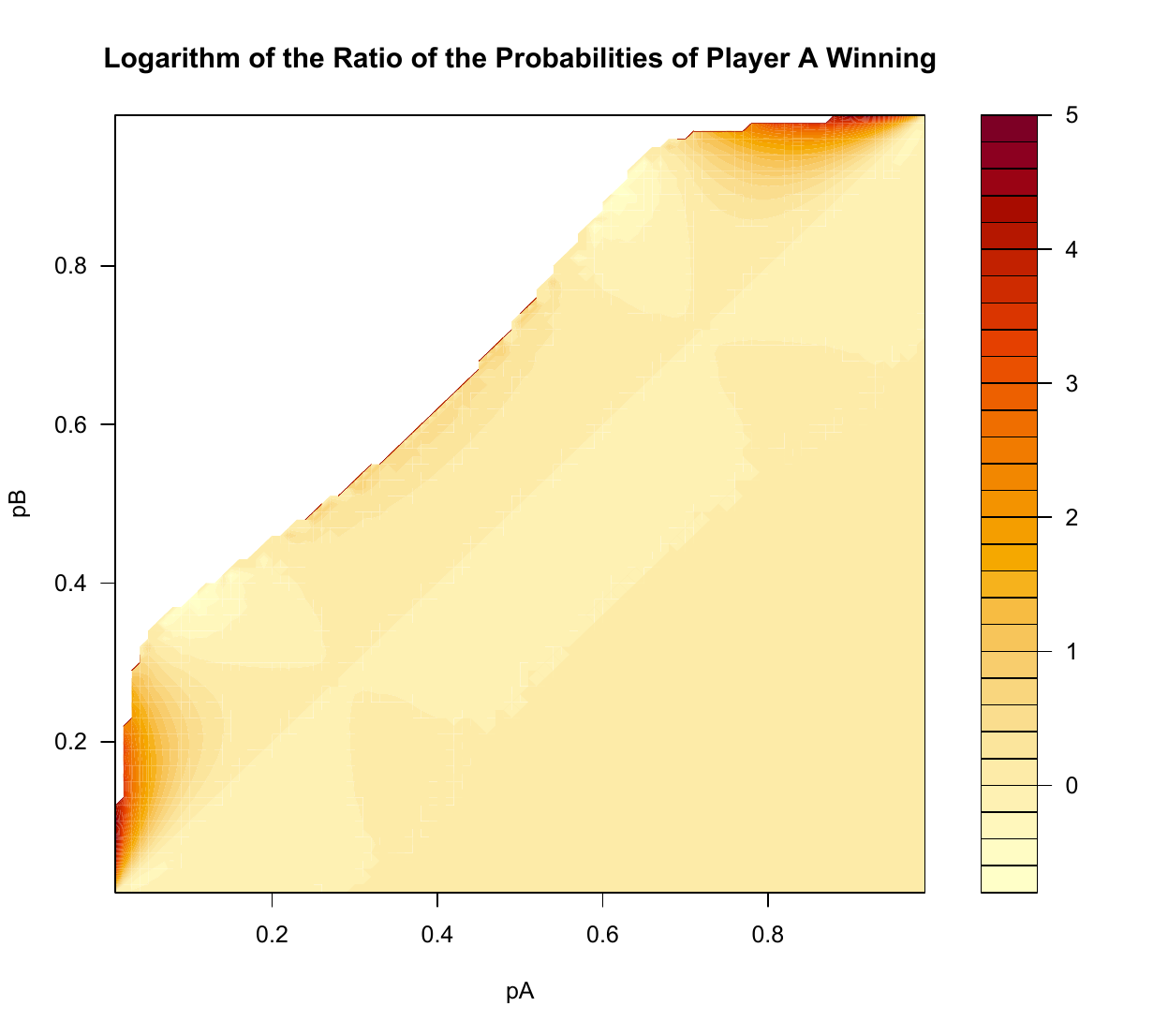}
\caption{Filled contour plots of the difference in the probability of player $A$ winning, and for the logarithm of the ratio of the probability of player $A$ winning for the tennis best-of-5-sets system and the best-of-$(2L+1)$-games system with an STT-type tiebreaker of games with $L=22$.}
\label{fig: filled contour plots of match vs BOKG - probabilities}
\end{center}
\end{figure}

\begin{figure}[h]
\begin{center}
\includegraphics[width=.3\textwidth,height=.3\textwidth]{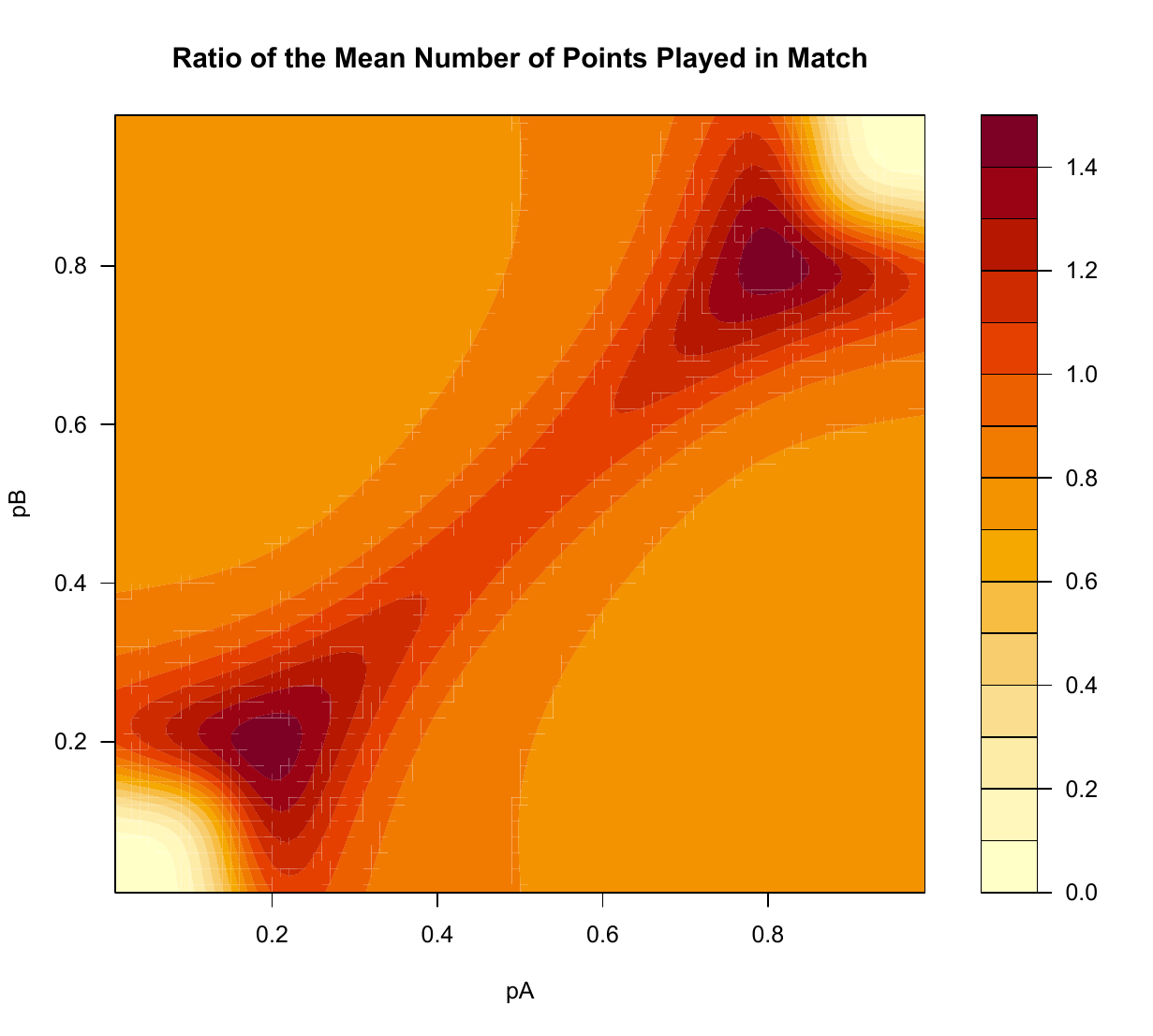} 
\includegraphics[width=.3\textwidth,height=.3\textwidth]{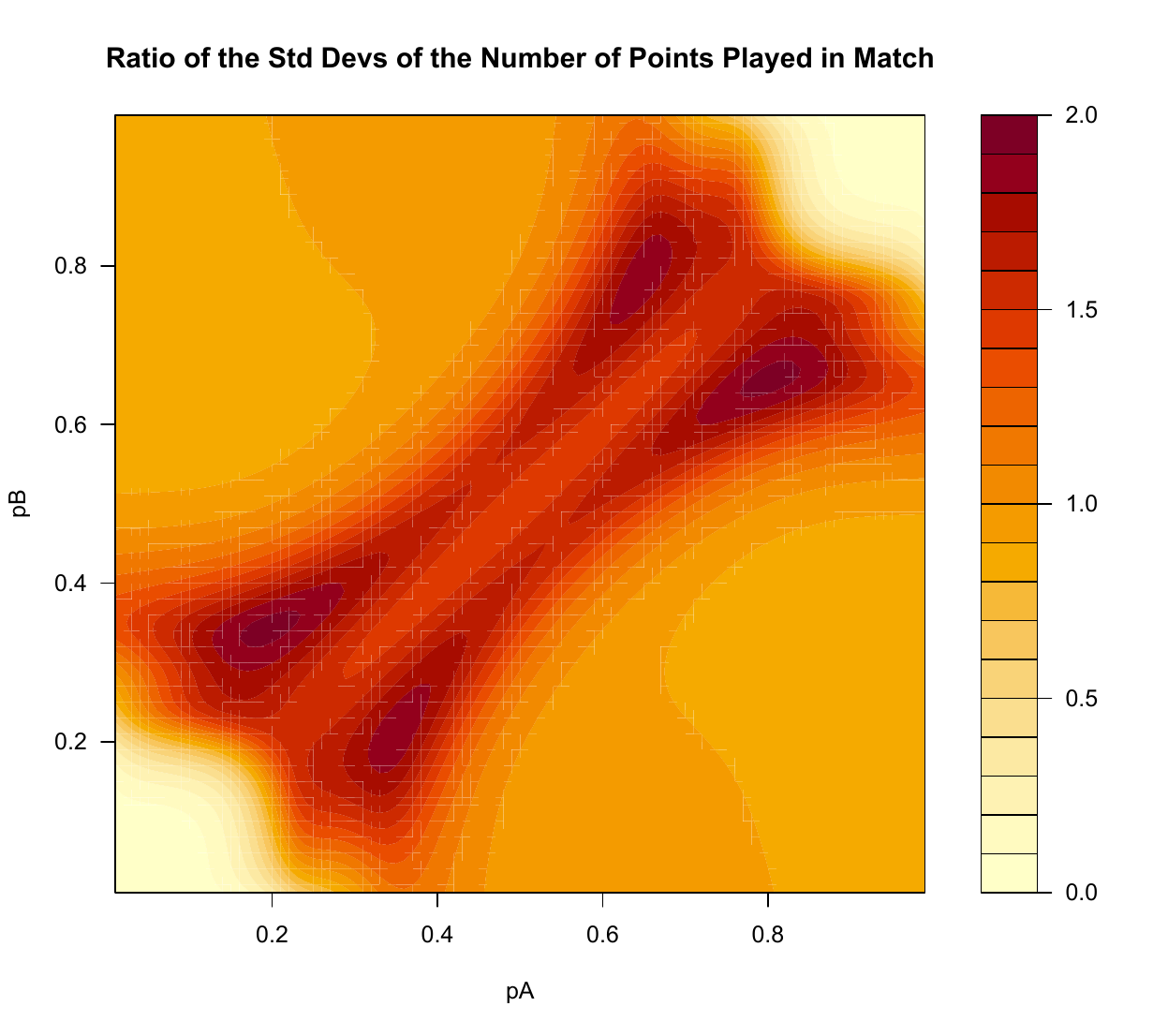}
\caption{Filled contour plots of the ratio of the mean number of points played, and the ratio of the standard deviation of the number of points played in the match for the tennis best-of-5-sets system and the best-of-$(2L+1)$-games system with an STT-type tiebreaker of games with $L=22$.}
\label{fig: filled contour plots of match vs BOKG - points}
\end{center}
\end{figure}

\section{Concluding Remarks}
\label{sec: Concluding Remarks}

This work focused on the probabilistic or stochastic aspects of tennis scoring systems, from the point, to the game and its tie-breaker, to the set and its tie-breaker and also the tie-breaker's tie-breaker, and to the match itself. A simplistic model is postulated where outcomes of points played are independent, and the success probability of a point server remains the same for the duration of the match, an assumption that has been made in other papers such as Carter and Crews \cite{Carter_Crews_1974} and Newton and Keller \cite{Newton_Keller_2005}. Clearly, these identical server success probability per player and the independence assumption are unrealistic, as in a real tennis match, there are many factors that could dynamically alter elements of the match such as crowd support, momentum change (see Jackson and Mosurski \cite{JacksonMosurski1997} for a discussion of the so-called {\em psychological momentum}), injury, missed calls, etc.. However, as Klaasen and Magnus \cite{Klaasen_Magnus_2001} pointed out, the IID-ness assumption may still provide an acceptable approximation and serve as a vehicle to examine the properties of scoring systems in tennis.

The basic probabilistic problem addressed in this work is as follows: Given that player $A$ (player $B$) has probability $p_A$ ($p_B$) of winning the point on his/her serve, what is the probability that he/she wins a service game; he/she wins a set; and he/she wins the match. This problem offers a great pedagogical opportunity for probability calculations, as well as obtaining other characteristics, such as the mean and standard deviation of the number of points that will be played in a game and its tie-breaker, in a set and its tie-breaker, and also in the whole match. We also addressed intriguing questions such as whether the probability that a tennis match will end equals one; whether the first server in a set or the match has an advantage, even under the hypothetical situation of equal abilities ($p_A=p_B$) of the two players; and the impact of the number of points needed to win a set tie-breaker.

Tennis scoring systems are technically statistical decision systems to determine the better player. There could be other systems that are possible, such as best-of-$K$-games systems. In any statistical decision system, erroneous decisions could occur, since the decisions are simply based on a finite number of points played. Thus, in tennis scoring systems, an inferior player could still win the match. The quality of a scoring system is therefore measured by its ability to determine the better player. In this paper, we also therefore proposed a system efficiency measure to enable the comparison of competing systems. In essence, the efficiency measure is based on comparing its performance with an oracle system and in considerations of the possible values of the two parameters $(p_A,p_B)$. Based on these efficiency measures, we compared tennis scoring systems with best-of-$K$-games systems, both in terms of the probability of the better player winning, as well as in terms of the duration of the match, measured in terms of the number of points that will be played in the match. An interesting, surprising result is that a best-of-43-games system, with a tie-breaker system where the first player to gain an advantage of two games ({\em games} instead of {\em points}), is comparable and possibly already better than the best-of-five-sets tennis system in terms of the probability of determining the better player. We emphasize that we are not advocating nor proposing a change in the match tennis system since the current system in Grand Slam Tournaments of Best-of-Five-Sets for Men or Best-of-Three-Sets for Women are historical with a very rich tradition and they generate considerable excitement for the spectators, aside from endowing the game of tennis with a truly unique scoring system. 

Though this work focused on the probabilistic aspects, there are also a myriad of statistical questions that arise, which we hope to address in separate works. Foremost among these questions is the problem of rating or ranking players based on the outcomes of head-to-head matches. It has not escaped the attention of the authors that the probability formulas in this paper could be used in constructing rating or ranking systems of tennis players. Another set of problems is of an inferential nature. For example, suppose that we had observed the scores of a best-of-five-sets match. What then is the best estimates of $p_A$ and $p_B$, and how do we measure the uncertainty in the resulting estimates? How does one construct a confidence region for $(p_A,p_B)$? Or, if one is of a Bayesian temperament, given a joint prior distribution for $(p_A,p_B)$, what is the posterior distribution of $(p_A,p_B)$ based on the result of a best-of-five-sets match? How could one form a credible region for $(p_A,p_B)$? How do we compute the posterior probability that player $A$ is better than player $B$? Perhaps, a more basic statistical question is to decide, based on the outcomes of the points in a match, whether the assumptions of unchanging probabilities per point, as well as the independence of the outcomes of points, are actually tenable. Klaasen and Magnus \cite{Klaasen_Magnus_2001} looked into this question and based on empirical data, concluded that this assumption is untenable, though they added that the deviations from {IID}-ness are small, hence the IID hypothesis still provides a good approximation in many cases. A more ambitious project, on the other hand, is to consider the whole tournament and all the games in the tournament, for instance, all 127 matches in the US Open Tennis Tournament. Given the outcomes of all these matches, could one develop a probabilistic or a generative model that could reasonably model the whole tournament? Such a generative model may, for instance, be used to simulate the whole tournament (see Newton and Aslam \cite{NewtonAslam2009} which proposed a Markov Chain model that could be used to simulate tennis tournaments)! These are interesting questions for future investigations. We envision that these future studies may benefit from the probabilistic analysis and results provided in this  paper.

\OLDPROOF{
One may ask: how {\em efficient} is the game decision system in tennis in terms of determining the better player? How should we measure efficiency? The ideal and perfect system is one where if $p > .5$, then we must have $\theta_G(p) = 1$, while of $p < .5$, then we must have $\theta_G(p) = 0$. But, perfection is a mirage, and there could not be a system achieving this ideal property. But we could measure the efficiency of a system relative to such an ideal system. For such a system, we have that
$$\int_0^{.5} (p-0) \pi(p) dp + \int_{.5}^1 (1 - p) \pi(p) dp = \int_0^{.5} p \pi(p) dp + \int_{.5}^1 (1 - p) \pi(p) dp$$
for a given weight function or prior distribution on $p$ denoted by $\pi(\cdot)$. This weight function may represent our prior knowledge about $p$, the server's probability of winning the point when serving against a {\em specific} opponent. If $\pi(p) = I\{p \in (0,1)\}$, where $I\{\cdot\}$ is the indicator function, so a uniform prior on $p$, the value of the displayed equation becomes $0.25$. However, a beta prior distribution, $Be(\alpha,\beta), \alpha > 0, \beta > 0$, on $p$ would be a more flexible specification of this prior on $p$.

A possible measure of the efficiency of the tennis game system is
$$\mbox{Eff}_\pi(G) = \frac{\int_0^{.5} (p - \theta_G(p)) \pi(p) dp + \int_{.5}^1 (\theta_G(p) - p) \pi(p) dp}{\int_0^{.5} p \pi(p) dp + \int_{.5}^1 (1 - p) \pi(p) dp} \times 100\%.$$
This efficiency measure is the ratio of the weighted (with respect to $\pi(\cdot)$) areas of the regions between the 45-degree line and the $\{\theta_G(p): p \in (0,1)\}$, and that for the ideal game decision system. When the prior on $p$ is a beta distribution, the following function will be helpful in terms of calculating the efficiency of a system.
\begin{equation}
    \label{HFunc}
H(l,u,\alpha,\beta,\kappa_1,\kappa_2) = \int_l^u \frac{p^{\alpha-1} (1-p)^{\beta-1}}{p^{\kappa_1} + (1-p)^{\kappa_2}} dp.
\end{equation}
In particular, note that the beta function is
$$B(\alpha,\beta) = \int_0^1 x^{\alpha-1}(1-x)^{\beta-1} dx = H(0,1,\alpha,\beta,1,1).$$
The integral will generally be not in closed-form, but it could be easily evaluated numerically, for instance, by using the {\tt integrate} function in {\tt R} \cite{R}. Let us also define the function
\begin{equation}
    \label{DeltaH}
    \Delta H(\alpha,\beta,\kappa_1,\kappa_2) = H(0,.5,\alpha,\beta,\kappa_1,\kappa_2) - H(.5,1,\alpha,\beta,\kappa_1,\kappa_2).
\end{equation}

\begin{theorem}
    The efficiency of the tennis game system, with respect to a $Be(\alpha,\beta)$ prior on $p$, is 
    \begin{eqnarray*}
     \lefteqn{ \mbox{Eff}_{Be(\alpha,\beta)}(G) = } \\ &&
    \left\{ \Delta H(\alpha+1,\beta,1,1) - \sum_{j=0}^2 {{3+j} \choose 3} \Delta H(4+\alpha,j+\beta,1,1) \right. - \\ && \left. 20 \
    \Delta H(5+\alpha,3+\beta,2,2) \right\} / \\ && {[\Delta H(1+\alpha,\beta,1,1) + H(.5,1,\alpha,\beta,1,1)]}.
    \end{eqnarray*}
\end{theorem}

\begin{proof}
    Immediate from the expression for $\theta_G(p)$, the definition of efficiency, and the definitions of the $H$ and $\Delta H$ functions.
\end{proof}

We could also compare this tennis game decision system with a best-of-seven system where the winner of the game is the first to reach four points first (a l\'a the baseball World Series system or the NBA Playoff system), no additional tie-breaks, even if the score reaches 3:3 (though of course the last game, usually called the rubber match, is a tie-breaker). The probability that the server wins the game in this system is
$$\theta_{G\mathrm{Bof7}}(p) = \sum_{j=0}^3 {{4+j-1} \choose 3} p^4 q^j.$$
\begin{figure}[h]
    \centering
    \includegraphics[width=\textwidth,height=\textwidth]{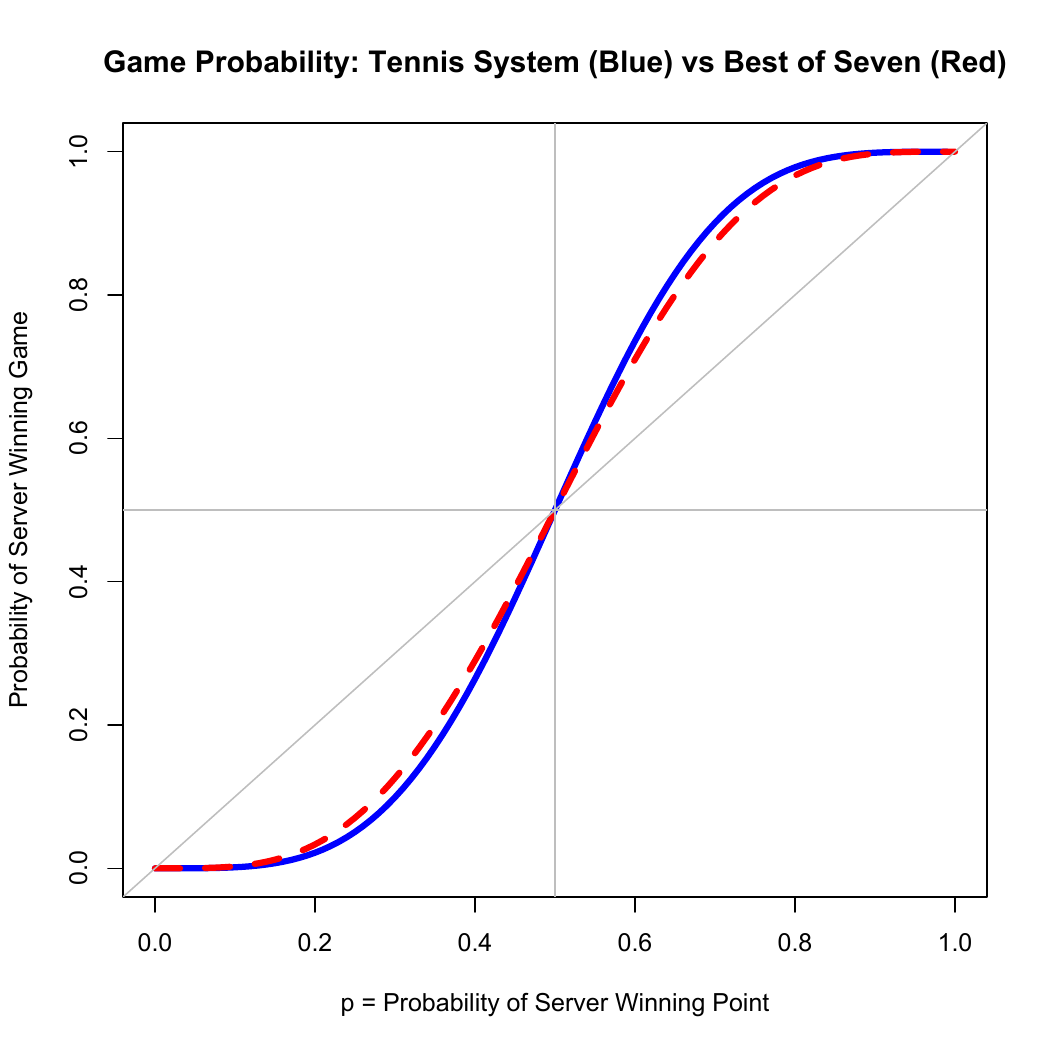}
    \caption{Plot of the probability of server winning the game with respect to server's probability of winning point under the game tennis system (BLUE, SOLID) and with a best-of-seven system (no additional tie-breaks) (RED, DASHED).}
    \label{fig: compare game probability}
\end{figure}

Figure \ref{fig: compare game probability} provides a plot to compare the probabilities of the server winning the game under these two systems. Based on this plot it appears that the game tennis system is better than the best-of-seven system. 
This observation can also be established analytically.
\begin{corollary}
    Let $\theta_G(p)$ denote the probability that the server wins the game in the tennis system (win by at least two points), and the corresponding probability in the best-of-seven system is     
    $$\theta_{G\mathrm{Bof7}}(p) = p^4 + 4p^4q + 10p^4q^2 + 20p^4q^3.$$
    Then, the difference in probability between the two systems is
    \[ 
    \theta_G(p) - \theta_{G\mathrm{Bof7}}(p) = 20p^3q^3\left( \theta_{GT}(p) - p \right), \qquad \text{where } \theta_{GT}(p) = \frac{p^2}{p^2+q^2}.
    \]
    Consequently,
    \[
    \theta_G(p)
    \begin{cases}
    > \theta_{G\mathrm{Bof7}}(p), & p \in (1/2, 1),\\
    = \theta_{G\mathrm{Bof7}}(p), & p \in \{0, 1/2, 1\},\\
    < \theta_{G\mathrm{Bof7}}(p), & p \in (0, 1/2).\\
    \end{cases}
    \]
\end{corollary}
\begin{proof}
    By Theorem~\ref{thm: prob wins game}, $\theta_G(p) = p^4 + 4p^4 q + 10p^4 q^2 + 20p^3 q^3 \theta_{GT}(p).$ We also have the probability that the server wins the game in the best-of-seven system
    \[
        \theta_{G\mathrm{Bof7}}(p) = \sum_{j=0}^3 {{4+j-1} \choose 3} p^4 q^j = p^4 + 4p^4q + 10p^4q^2 + 20p^4q^3.
    \]
    Subtracting gives $\theta_G(p) - \theta_{G\mathrm{Bof7}}(p) = 20p^3q^3(\theta_{GT}(p)-p)$. Apparently, this difference is equal to zero when $p = 0$, $q = 1-p = 0$. 
    For $p \in (0,1)$, we have
    \[
    \theta_{GT}(p)-p = \frac{p^2}{p^2+q^2}-p
    = \frac{p^2-p(p^2-(1-p)^2)}{p^2+q^2}
    =  \frac{p(1-p)(2p-1)}{p^2+q^2}.
    \]
    Note that the sign of $\theta_{GT}(p)-p$ matches the sign of $(2p-1)$, establishing the three cases stated in the corollary.
\end{proof}
This analytical result confirms that compared with the best-of-seven system, the tennis game system (win by at least two points) amplifies the inherent advantage of the better server, while both systems coincide at boundary cases ($p = 0$ and $p = 1$) and the symmetric case $p = 1/2$. 
In particular, when $p \in (1/2, 1)$, the win-by-two points system results in a larger probability for the server to win the game, whereas for $p < 1/2$, it results in a lower probability, thereby enlarging the total area of the regions between the curve and the 45-degree line in Figure~\ref{fig: compare game probability}. The greater enclosed area reflects the higher efficiency of the tennis game system in determining the better player.

Interestingly, one may inquire for what value of $K$, an odd integer, will a best-of-$K$ system have a similar performance to the game tennis system? For a best-of-$K$ system with $K=2L+1$, the probability of the server winning the game is
$$\theta_{G\mathrm{Bof}K}(p;K=2L+1) = \sum_{j=0}^L {{(L+1)+j-1} \choose L} p^{L+1} q^j.$$
It turns out that a best-of-nine system, so $K=9$, provides the closest performance to the tennis game decision system, with neither system dominating the other. This could be seen in Figure \ref{fig: comparison of game systems}.
\begin{figure}[h]
    \centering
    \includegraphics[width=\textwidth,height=\textwidth]{Plot_CompareGAmeSystems_New.pdf}
    \caption{Plot of the probability of server winning the game with respect to server's probability of winning point under the game tennis system (BLACK) and with a best-of-7 (BLUE), best-of-9 (GREEN), and best-of-11 (RED) systems.}
    \label{fig: comparison of game systems}
\end{figure}

A possible measure of the {\em relative efficiency} of one system over another is to take the ratio of their efficiency measures:
$$\mbox{RelEff}(Sys1:Sys2) = \frac{\mbox{Eff}(Sys1)}{\mbox{Eff}(Sys2)} \times 100\%.$$
Let us use this notion of relative efficiency to compare the tennis game system with a best-of-$K$ system, with $K = 2L + 1$, and using a $Be(\alpha,\beta)$ prior for $p$. In particular, we take $\alpha = 2,\beta=1$, which coincides with a prior mean for $p$ equal to $2/3$. 

Table \ref{tab: relative efficiency for game systems} presents the relative efficiency, in percent, of the Best-of-$K$ game system versus the tennis game system. This table further confirms that the Best-of-9 game system is the one closest in performance to the tennis game system, with a relative efficiency of 100.0808\%. As to be expected, the larger $K$ is, the better efficiency of a Best-of-$K$ game system, and when $K \ge 11$, the Best-of-$K$ system becomes superior to the tennis game system. But, there is another dimension to these systems, which is the number of points that will need to be played to end the game (we already know that in the tennis game system, it will end with probability one; and it is clear that for a best-of-$K$ system, it will definitely end since the game ends when one player gets $L+1$ points, with $K = 2L + 1$. We examine this aspect in the next subsection.

\begin{table}
\begin{center}
\begin{tabular}{cccc} \hline
  $K$ & Tennis Game Effi & Best-of-$K$ Effi & Rel Eff \\ \hline
    5   &       50.7403   &   37.5000 & 73.9058 \\
   7     &     50.7403    &  45.3125  & 89.3029 \\
   9     &     50.7403   &   50.7813 & 100.0808 \\
  11      &    50.7403    &  54.8828 & 108.1642 \\
   13     &     50.7403   &   58.1055 & 114.5155 \\
   15     &     50.7403   &   60.7239 & 119.6759 \\
  17     &     50.7403   &   62.9059  & 123.9763 \\
   19    &      50.7403  &    64.7606 & 127.6316 \\
   21    &      50.7403  &    66.3624 & 130.7884
 \\ \hline
\end{tabular}
\end{center}
\caption{Relative efficiency (in \%) of a Best-of-$K$ game system versus the tennis game system. The prior for $p$ is a $Be(\alpha=2,\beta=1)$.}
\label{tab: relative efficiency for game systems}
\end{table}

Later on we will also measure the \RED{efficiency of the tennis set or match system}, and in such situations we need to deal with both $p_A$ and $p_B$ simultaneously. As we will argue later, when comparing the system's (set or match) probability for player $A$ to win, it is logical to compare its probability with the probability that $A$ will win in a set tie-breaker's tie-breaker, that is, to compare with $\theta_{STT}(p_A,p_B) = OR(p_A,p_B)/(1 + OR(p_A,p_B))$.

In a similar vein, we could measure the efficiency of the game system by comparing the probability of the server winning the game, not with $p$ as we have done above, but with the probability that the server will win a game-tiebreaker (which is winning by two points). This baseline probability is $\theta_{GT}(p) = p^2/(p^2 + q^2)$. The alternative game efficiency measure is then
$$\mbox{Eff}_\pi^*(G) = \frac{\int_0^{.5} (\theta_{GT}(p) - \theta_G(p)) \pi(p) dp + \int_{.5}^1 (\theta_G(p) - \theta_{GT}(p)) \pi(p) dp}{\int_0^{.5} \theta_{GT}(p) \pi(p) dp + \int_{.5}^1 (1 - \theta_{GT}(p)) \pi(p) dp} \times 100\%.$$
It is clear that
$$\mbox{Eff}_\pi^*(G) \le \mbox{Eff}_\pi(G).$$
We could then use this alternative efficiency measure for calculating the relative efficiency of one system over the other. Table \ref{tab: relative efficiency2 for game systems} presents the relative efficiency of a Best-of-$K$ system versus the tennis game system based on this alternative measure of efficiency. It is interesting to see that a Best-of-$5$ system is highly inferior to the tennis game system. This is so because the tennis game system will have at least four points played, whereas a best-of-5 system could end with just three points played and with a maximum of five points played. The best-of-5 system is just almost equivalent to the game tie-breaker system as could be seen in Figure \ref{fig: Tennis_Game vs Best_of_5}. A best-of-5 system does not even dominate the game tie-breaker system, hence the negative relative efficiency value in Table \ref{tab: relative efficiency2 for game systems}. Again, just like in the earlier relative efficiency measure, the performance of the two systems are almost identical for a Best-of-$9$ system.
\RED{Note, however, that the relative efficiencies in Tables \ref{tab: relative efficiency for game systems} and \ref{tab: relative efficiency2 for game systems} are quite discrepant, even getting a negative value for the case of $K=5$ for the alternative relative efficiency. This could be a consequence of using the baseline probability $\theta_{GT}(p)$ when calculating the efficiency for the Best-of-$K$ system, which may be an unfair baseline value as there is {\em no} tie-breaker in a Best-of-$K$ system. As such, it appears that the first relative efficiency measure may still be the more appropriate one to use when comparing different systems, at least in this situation where there is {\em only} one parameter, the $p$ in this case.}

\begin{table}
\begin{center}
\begin{tabular}{cccc} \hline
  $K$ & Tennis Game Effi2 & Best-of-$K$ Effi2 & Rel Eff2 \\ \hline
    5    &      19.7339  &    -1.8404 & -9.3258 \\
    7    &      19.7339  &    10.8897 & 55.1826 \\
    9    &      19.7339  &    19.8007 & 100.3385 \\
   11     &     19.7339  &    26.4840 & 134.2054 \\
   13     &     19.7339  &    31.7351 & 160.8151 \\
   15     &     19.7339  &    36.0017 & 182.4355 \\
   17     &     19.7339  &    39.5572 & 200.4525 \\
   19     &     19.7339  &    42.5793 & 215.7670 \\
   21     &     19.7339  &    45.1893 & 228.9931
 \\ \hline
\end{tabular}
\end{center}
\caption{Relative efficiency (in \%)  based on the alternative efficiency measure of a Best-of-$K$ game system versus the tennis game system. The prior for $p$ is a $Be(\alpha=2,\beta=1)$.}
\label{tab: relative efficiency2 for game systems}
\end{table}

\begin{figure}[!htb]
\begin{center}
    \includegraphics[width=\textwidth,height=\textwidth]{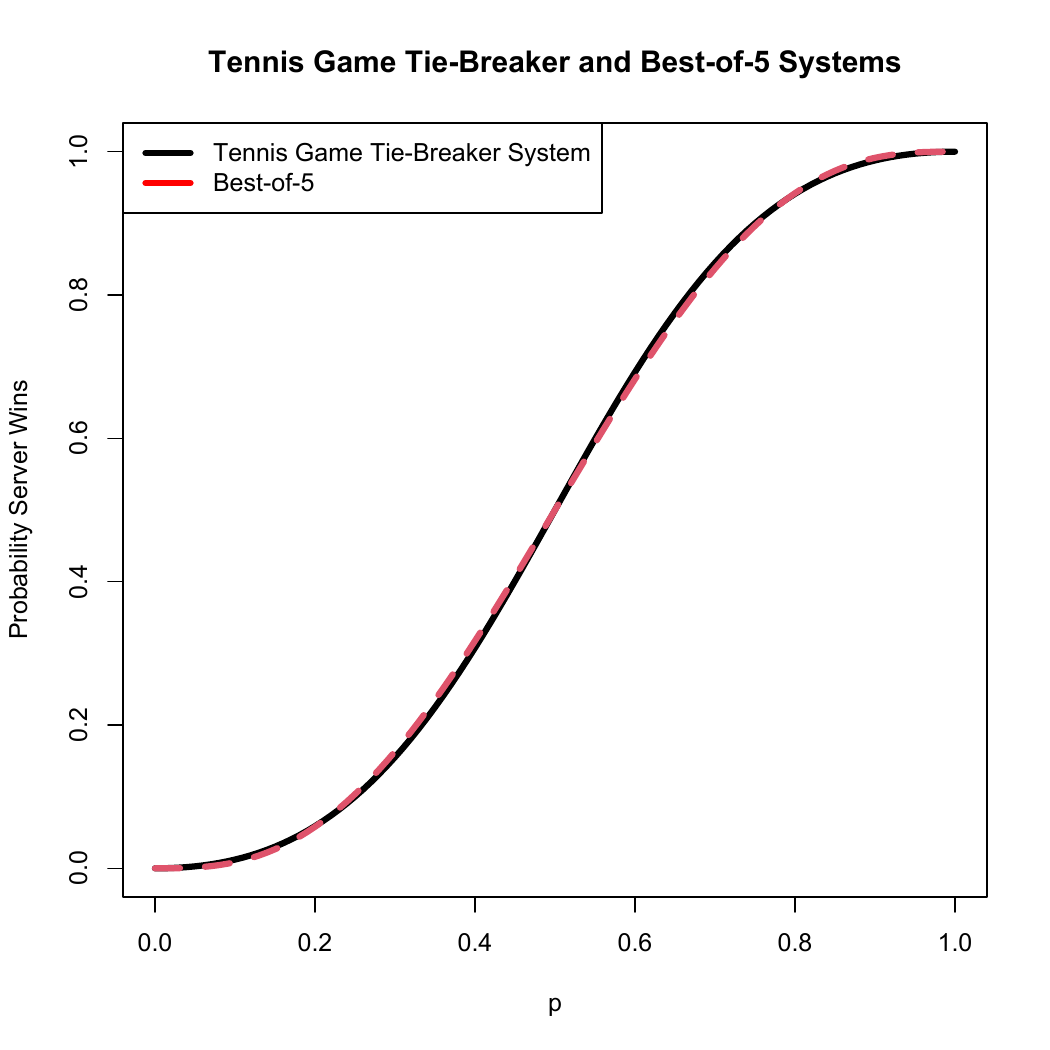}
\end{center}
\caption{Probabilities of Server Winning Game Tie-Breaker and a Best-of-5 System.}
\label{fig: Tennis_Game vs Best_of_5}
\end{figure}
}

\OLDPROOF{

\section{Empirical Studies}
\label{sec: empirical studies}

\subsection{Simulating Matches}

Idea: Write a program to simulate a tennis match which takes as inputs $p_A$, $p_B$, $K_0$, $K_1$, and $L = 2Q+1$. The program should be able to simulate the outcome of a match. It should keep track of the match score as well as the number of points played in the match, possibly per set.

We could the let the program play the match Mreps $=10000$ times and we could compare the empirical proportions of wins for $A$ and also the number of points played (mean and stand dev) with the theoretical results. This will provide us confirmation that the simplified theory is ok.

\subsection{Real Tournament: 2025 US Open}

We could obtain the final match scores (127 matches) for the recent 2025 US Open Tournament (Mens Draw). We could also doo the same for the Womens Draw. Given the observed outcomes, could we do some statistical inference, such as creating a statistical model that could mimic the outcomes of the real matches?

\section{Additional Questions (Statistical)}

\RED{These statistical questions could be for another paper. Maybe the statistical questions could form part of Dip's dissertation, where he could consider other systems for determining winners. For example, the FIDE Chess World Cup (\url{https://en.wikipedia.org/wiki/Chess_World_Cup_2025}) starts with 206 players and is a knock-out system. What are the properties of the system in terms of determining the winner? What is the chance that the eventual winner is actually the best one? Or, the soccer World Cup \url{https://en.wikipedia.org/wiki/FIFA_World_Cup} or the cricket World Cup \url{https://en.wikipedia.org/wiki/Cricket_World_Cup}.}

\RED{An interesting statistical question: Given that the set $K$-point tie-breaker ended with $n$ points played, and with player $A$ winning, what will be the best estimates of $(p_A,p_B)$? How about a confidence region?}

\RED{The Isner-Mahut fifth set Wimbledon tie-breaker in 2010 ended at 70 to 68 games. What is the best estimate of $(p_A,p_B)$, and what is a 95\% confidence region for $(p_A,p_B)$?}

A tennis match system could be viewed as a statistical approach to determine the better player. How do we characterize being a "better player"? {\bf It seems now that $p_A >(<) p_B$ means $A$ is better (worse) than $B$. This is equivalent to having the log-odds-ratio being greater (less) than zero.}

Is it: $p_A > p_B$ or the odds ratio being greater than 1?

How do we put a prior on $(p_A,p_B)$ that will encapsulate $A$ being better than $B$? What will be $\Pr\{\mbox{$A$ is better than $B$}\}$?

Given the outcome of a match, (or the outcomes of each of the point), what is the posterior probability of $A$ being better than $B$?

How do we measure the efficiency of a system? What should be the base of comparison? In the game system, we simply compare the performance with $p$. However, in the case where $p_A$ and $p_B$ could differ, a possible measure of efficiency of a system $\delta$ with $\theta_\delta(p_A,p_B)$, with respect to a prior $\pi(p_A,p_B)$ is:
\begin{eqnarray*}
\lefteqn{ \mbox{Eff}_\pi(\delta) = } \\ &&
\left\{ \int_{p_A < p_B} (\theta_{STT}(p_A,p_B) - \theta_\delta(p_A,p_B)) \pi(p_A,p_B) dp_Adp_B + \right. \\ && \left. \int_{p_A > p_B} ( \theta_\delta(p_A,p_B)) - \theta_{STT}(p_A,p_B) \pi(p_A,p_B) dp_Adp_B \right\} / \\ &&
\left\{\int_{p_A < p_B} (\theta_{STT}(p_A,p_B) - 0) \pi(p_A,p_B) dp_Adp_B + \right. \\ && \left. \int_{p_A > p_B} ( 1 - \theta_{STT}(p_A,p_B) \pi(p_A,p_B) dp_Adp_B \right\},
\end{eqnarray*}
where
$$\theta_{STT}(p_A,p_B) = \frac{\mbox{OR}(p_A,p_B)}{1 + \mbox{OR}(p_A,p_B)}.$$
The rationale is we assess the improvement in the player $A$ winning for a system $\delta$ by comparing it with the probability of winning in a set tie-breaker's tie-breaker, which is given by $\theta_{STT}(p_A,p_B)$. Recall that in computing efficiency for the game system, we simply compared with $p$, the probability of the server, whose probability of winning the point on his serve, is $p$. But for the set or match systems, we have two probabilities to deal with: $p_A$ and $p_B$.

{\bf Question:} Relative efficiency of a Best-of-61 game match versus the match tennis system?

{\bf Question:} How do we assign a joint prior $\pi(p_!,p_B)$ for $(p_A,p_B)$? Do we simply use the product of two beta distributions?

{\bf Question:} Distribution, mean, and standard deviation for the number of points played to end a set; a match.

{\bf Question:} What is the posterior distribution of $(p_A,p_B)$, given the outcome of the match? given the outcomes for each of the points of the match? What then is the posterior probability that $p_A > p_B$?

}

\section*{Acknowledgments}

We thank Professor David Hitchcock and Md.\ Ariful Islam Sanim for some discussions which benefited this manuscript.

\bibliographystyle{plain}
\bibliography{DoubleDelight}

\end{document}